\documentclass[11pt]{amsart}
\usepackage{amsmath, amssymb, amsthm, amsfonts, mathrsfs}
\usepackage{xcolor}

\newtheorem{theorem}{Theorem}[section]

\newtheorem{corollary}[theorem]{Corollary}
\newtheorem{proposition}[theorem]{Proposition}
\newtheorem{lemma}[theorem]{Lemma}

\newtheorem{question}[theorem]{Question}

\theoremstyle{definition}
\newtheorem{definition}[theorem]{Definition}
\newtheorem{notation}[theorem]{Notation}
\newtheorem{remark}[theorem]{Remark}

\newtheorem{example}[theorem]{Example}

\numberwithin{equation}{section}

\DeclareSymbolFont{AMSb}{U}{msb}{m}{n}
\DeclareMathSymbol{\N}{\mathbin}{AMSb}{"4E}
\DeclareMathSymbol{\Z}{\mathbin}{AMSb}{"5A}
\DeclareMathSymbol{\R}{\mathbin}{AMSb}{"52}
\DeclareMathSymbol{\Q}{\mathbin}{AMSb}{"51}
\DeclareMathSymbol{\I}{\mathbin}{AMSb}{"49}
\DeclareMathSymbol{\C}{\mathbin}{AMSb}{"43}

\theoremstyle{definition}

\newcommand{\im}{{\rm im}}

\newcommand{\cU}{{\mathcal U}}

\newcommand{\Nb}{{\mathbb N}}

\newcommand{\Qb}{{\mathbb Q}}

\newcommand{\diam}{{\rm diam}}

\newcommand{\mesh}{{\rm mesh}}

\newcommand{\Hom}{{\rm Hom}}

\allowdisplaybreaks

\begin{document}

\title{On the denseness of distal points}
\author{Su Gao}
\address{School of Mathematical Sciences and LPMC\\ Nankai University \\ Tianjin 300071 \\P.R. China}
\email{sgao@nankai.edu.cn}
\thanks{S. Gao acknowledges the partial support of his research by the Fundamental Research Funds for the Central Universities and by the National Science Foundation of China (NSFC) grant 12271263.}
\author{Hanfeng Li}
\address{College of Mathematics and Statistics, Center of Mathematics, Chongqing University, Chongqing 401331, P.R. China}
\email{hfli@cqu.edu.cn}
\author{Ruiwen Li}
\address{School of Mathematical Sciences and LPMC\\ Nankai University \\ Tianjin 300071 \\P.R. China}
\email{rwli@mail.nankai.edu.cn}
\date{\today}
\begin{abstract} We give an answer to a question of Xu and Ye \cite{XY} on the denseness of distal points in the Bernoulli shift $2^G$ for a countable discrete group $G$. For a related but stronger notion of almost automorphic points, we answer the similar question by showing that the corresponding collection of the groups coincides with maximal almost periodic ones. These characterizations allow us to construct $2$-step nilpotent groups for which the answers to the Xu-Ye question differ. In search for an intrinsic answer to the Xu-Ye question, we introduce a notion of point-distal radical for a countable discrete group and show that a necessary condition is for the point-distal radical to be trivial. Finally, we consider some related questions, and show that the collection of all countable groups $G$ for which the set of distal points is dense in $2^G$ is closed under finite-index extension, and that the collection of countable groups $G$ for which the constant sequences are the only distal (almost automorphic) points coincides with the minimally almost periodic ones.
\end{abstract}
\maketitle

\section{Introduction}
The research in this paper was motivated by the following question asked by Xu and Ye:

\begin{question}[{\cite[Question (2)]{XY}}]\label{XYQ} For which countably infinite group $G$, the set of all distal points is dense in $(2^G, G)$?
\end{question}

As shown in recent research \cite{XY, HSXY}, this question is closely related to the question of which $G$-systems are disjoint with all mimimal $G$-systems. It was shown in \cite{XY} that the answer to this question is no for the Tarski monster group and yes for all residually finite groups.

In this paper we first consider the notion of almost automorphic points, which is stronger than distality. We will consider the following similar question. 

\begin{question}\label{AAP} For which countably infinite group $G$, the set of all almost automorphic points is dense in $(2^G, G)$?
\end{question}

It turns out that a positive answer to Question~\ref{AAP} is given by a well-studied notion of topological groups.

\begin{theorem}\label{thm:main1} For a countably infinite group $G$, the set of all almost automorphic points is dense in $(2^G, G)$ if and only if, as a discrete group, $G$ is maximally almost periodic.
\end{theorem}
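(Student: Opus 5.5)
We prove the two directions separately. Throughout we use the standard facts about almost automorphic points. A point $x$ in a $G$-system $X$ is almost automorphic if, whenever a net $g_i$ satisfies $g_ix\to y$, we also have $g_i^{-1}y\to x$. Equivalently, $x$ is almost automorphic if $\overline{Gx}$ is minimal and $x$ is a point of injectivity of the factor map $\pi:\overline{Gx}\to Z$ onto the maximal equicontinuous factor of $\overline{Gx}$. Also, a discrete group $G$ is maximally almost periodic exactly when the finite-dimensional unitary representations separate points. Equivalently, $G$ embeds injectively into its Bohr compactification $bG$, or into some compact group.

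\emph{Sufficiency.} Suppose $G$ is maximally almost periodic, and let $\iota:G\to K$ be an injective homomorphism into a compact metrizable group. We may take $K$ metrizable: $G$ is countable, so countably many finite-dimensional representations already separate points. Fix a nonempty basic open set $U\subseteq 2^G$, determined by a pattern on a finite set $F\subseteq G$. We produce almost automorphic points by the Sturmian/Toeplitz-type coding.
\begin{itemize}
\item Choose an open set $V\subseteq K$ whose boundary has Haar measure zero, or at least has empty interior and is nowhere dense along the orbit.
\item Define $x\in 2^G$ by $x(g)=1$ iff $\iota(g)k_0\in V$.
\end{itemize}
Since $\iota$ is injective, for distinct $f\in F$ the points $\iota(f)$ are distinct. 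We can therefore choose $k_0$ and $V$ so that the points $\iota(f)k_0$ ($f\in F$) are outside $\partial V$ and realize the prescribed pattern. Concretely, pick small disjoint balls around the $\iota(f)k_0$ and let $V$ be the union of those balls corresponding to pattern value $1$.

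We also need $\iota(g)k_0\notin\partial V$ for all $g\in G$. Since $G$ is countable, this holds for a comeager set of $k_0$, so we choose radii generically. The standard argument then shows the following:
\begin{itemize}
\item $\overline{Gx}$ maps equivariantly onto $\overline{\iota(G)}k_0$, a homogeneous, hence minimal, equicontinuous system.
\item The fiber over $k_0$ is a singleton, because $k_0$ has a full orbit avoiding $\partial V$.
\end{itemize}
Hence $x$ is almost automorphic and lies in $U$.

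\emph{Necessity.} Suppose almost automorphic points are dense, and let $e\neq g\in G$. Choose $U=\{x: x(e)\neq x(g^{-1})\}$, or an equivalent condition forcing $x\neq gx$ at a coordinate. Take an almost automorphic $x\in U$ and put $X=\overline{Gx}$. Let $\pi:X\to Z$ be the maximal equicontinuous factor, with $\pi^{-1}(\pi x)=\{x\}$. If $\pi(gx)=\pi(x)$, then injectivity forces $gx=x$, which contradicts $x\in U$. So $g$ acts nontrivially on $Z$.

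Now $Z$ is a minimal equicontinuous system, so it is a homogeneous space $L/H$ of a compact group $L$ (the Ellis group) containing a dense image of $G$. Then $g$ maps to a nonidentity element of $L$. Since compact groups are maximally almost periodic by Peter–Weyl, some finite-dimensional unitary representation of $L$ separates the image of $g$ from $1$. Composing with $G\to L$ gives a representation separating $g$ from $e$, so $G$ is maximally almost periodic.

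I expect the main obstacle to be the sufficiency construction. The delicate points are ensuring simultaneously that the prescribed finite pattern is realized and that the coding point has trivial fiber. This amounts to choosing $V$ and $k_0$ so that the entire orbit avoids $\partial V$ and the coding separates points of $\overline{\iota(G)}k_0$, and it requires a careful genericity (Baire category) argument over $k_0$.
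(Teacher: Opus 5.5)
Your necessity direction is essentially the paper's argument: Veech's description of almost automorphic points as singleton fibres over the equicontinuous factor, followed by the compact Ellis group of that factor. Your choice of coding set in the sufficiency direction is also essentially the paper's set $S$. With $k_0=e_K$ and radii outside the countable set of distances, $\iota^{-1}(V)$ is clopen for the pulled-back metric. The gap is in how you verify that $x$ is almost automorphic. The claim that $\overline{Gx}$ maps equivariantly onto $\overline{\iota(G)}k_0$ with $x\mapsto k_0$ is false in general. Such a map exists only if the coding by $V$ separates points, and a finite union of balls can have a nontrivial right stabilizer. For example, take $G=\bigoplus_n\mathbb{Z}/2$ inside $K=\prod_n\mathbb{Z}/2$ with $d(a,b)=2^{-\min\{n:a_n\neq b_n\}}$. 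Balls are cylinder sets, so $V$ depends only on the first $N$ coordinates. Every $g\in G$ supported beyond $N$ then satisfies $gx=x$ although $\iota(g)k_0\neq k_0$. So no such map exists for any $k_0$, and the ``fibre over $k_0$'' is meaningless. A Baire-category choice of $k_0$ cannot repair this, because the obstruction lies in $V$, not in $k_0$.

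You do not need a factor map; the condition $\iota(G)k_0\cap\partial V=\varnothing$ suffices directly. Take $k_0=e_K$. If $g_nx\to y$, pass to a subsequence with $\iota(g_n)\to l$. Since $(g_nx)(h)=1_V(\iota(h)\iota(g_n))$, you get $y(h)=1_V(\iota(h)l)$ whenever $\iota(h)l\notin\partial V$. For fixed $h$, $\iota(hg_n^{-1})l\to\iota(h)$, which lies in one of the open sets $V$ or $K\setminus\overline V$. Hence eventually $(g_n^{-1}y)(h)=y(hg_n^{-1})=1_V(\iota(hg_n^{-1})l)=x(h)$. So $g_n^{-1}y\to x$ along every subsequence on which $\iota(g_n)$ converges, and therefore along the whole sequence. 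Alternatively, follow the paper. With $\rho$ an invariant metric on $K$, the set $\{g: x(g)=x(e_G)\}$ is a clopen neighbourhood of $e_G$ for the totally bounded left-invariant metric $\rho(\iota(g),\iota(h))$. It is therefore a T$\Delta^*$-set by Lemma~\ref{lem:tbli}, and Lemmas~\ref{lem:3.8} and~\ref{lem:3.7} give almost automorphy.
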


Thus for all countable maximally almost periodic groups $G$, the set of all distal points is also dense in $(2^G, G)$. The notion of maximal almost periodicity (also known as MAP) was introduced by von Neumann \cite{vN}. It is well known that all countable abelian groups and residually finite groups are MAP. 

Next we turn to Question~\ref{XYQ}. We will characterize countably infinite discrete groups $G$ for which Question~\ref{XYQ} has a positive answer.

\begin{theorem}\label{thm:main2} For a countably infinite group $G$, the set of all distal points is dense in $(2^G, G)$ if and only if $G$ admits an effective point-distal continuous action on some compact metrizable space.
\end{theorem}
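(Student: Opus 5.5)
We prove both directions separately. Recall that $x\in X$ is a \emph{distal point} if it is proximal to no other point of its orbit closure $\overline{Gx}$, and that an action $G\curvearrowright X$ is \emph{point-distal} if some point with dense orbit is distal (equivalently, after restricting, $X=\overline{Gx}$ with $x$ distal). Effective means every $g\neq e$ acts nontrivially on $X$.

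($\Rightarrow$) Assume distal points are dense in $2^G$. For each $g\neq e$, the clopen set $U_g=\{x: x(e)\neq x(g^{-1})\}$ is nonempty, so it contains a distal point $x_g$. Then $g x_g\neq x_g$, so $g$ acts nontrivially on $Y_g=\overline{Gx_g}$. Put $x=(x_g)_{g\neq e}\in (2^G)^{G\setminus\{e\}}$, a compact metrizable space since $G$ is countable, and let $X=\overline{Gx}$. The key fact is that a product of distal points is distal. If $y\in\overline{Gx}$ is proximal to $x$, then each coordinate $y_g\in\overline{Gx_g}$ is proximal to $x_g$, hence $y_g=x_g$, so $y=x$. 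Here the coordinatewise projection of a net witnessing proximality still witnesses proximality, because proximality in a product along a net follows once a subnet converges to a diagonal point, which is checked coordinatewise. The action on $X$ is effective, since $g\neq e$ moves the $g$-th coordinate of $x$.

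($\Leftarrow$) Let $G\curvearrowright X$ be effective, with $X=\overline{Gx_0}$ and $x_0$ distal. It suffices to show that every basic cylinder $[p]=\{y: y|_F=p\}$, with $F\subset G$ finite and $p\in 2^F$, contains a distal point. Since distality is preserved under factor maps (images of distal points are distal), we want a factor map $\pi:\overline{Gx_0}\to 2^G$ with $\pi(x_0)\in[p]$. Such maps have the form $\pi(y)(h)=\mathbf 1_A(h^{-1}y)$ for a clopen $A$. Because $X$ need not be zero-dimensional, we may use finitely many open sets with care. A cleaner alternative is to replace $x_0$ by a distal point in a zero-dimensional extension, but I would try the following instead. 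Effectiveness together with density of the orbit gives, for $h\neq h'$ in $F$, that the points $h^{-1}x_0$ are pairwise distinct. Indeed, $h'h^{-1}$ acts nontrivially on $X$, hence nontrivially on the dense orbit $Gx_0$, so some $k x_0$ is moved; translating, replace $x_0$ by a suitable orbit point $kx_0$, which is still distal. A finite set of elements can be handled by choosing $k$ generically, since the set of $y$ fixed by some $h'h^{-1}$ is closed nowhere dense in $X$. Choose disjoint small closed neighbourhoods of the finitely many distinct points $h^{-1}kx_0$.

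The main obstacle is producing a continuous $\{0,1\}$-valued function separating these points in a connected $X$. To handle it, work with the Ellis/enveloping structure: use a continuous $f:X\to[0,1]$ with $f(h^{-1}kx_0)=p(h)$, then consider the map $\Phi:X\to[0,1]^G$, $\Phi(y)(h)=f(h^{-1}y)$, and the point $z=\Phi(kx_0)$, which is distal in $[0,1]^G$. Finally, threshold: the point $z'=\mathbf 1_{\{z>1/2\}}\in 2^G$ lies in $[p]$. We choose the threshold $t$ to avoid the countable set of values $z(h)$ and use the fact, which must be checked, that thresholding a distal point $z\in[0,1]^G$ at a level $t$ not attained with $z$ bounded away from $t$ along orbit closures preserves distality. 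For this, pick $f$ so that $f(X)$ avoids a neighbourhood of $t$; taking $f$ to be $\{0,1\}$-valued up to continuity is exactly the issue, and if it fails we would pass to a zero-dimensional almost one-to-one extension of $\overline{Gx_0}$ in which $x_0$ lifts to a distal point.
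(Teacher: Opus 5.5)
Your ($\Rightarrow$) direction is correct and is essentially the paper's route (1)$\Rightarrow$(3)$\Rightarrow$(4)$\Rightarrow$(5)$\Rightarrow$(2) in Theorem~\ref{P-effective point-distal}. The ($\Leftarrow$) direction has two genuine gaps.

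\textbf{First gap: choosing the orbit point.} You claim some orbit point $kx_0$ makes the points $h^{-1}kx_0$ ($h\in F$) pairwise distinct. This is false in general. In an effective action the fixed-point set of a nontrivial element is a proper closed set, but it need not be nowhere dense, and finitely many such sets can cover $X$. For example, let $G=S_3\times\mathbb{Z}$ act on $\{1,2,3\}\times\mathbb{T}$ by $(\sigma,n)(i,s)=(\sigma(i),s+n\alpha)$ with $\alpha$ irrational. This action is effective, minimal and equicontinuous, hence point-distal. Yet every point is fixed by one of the transpositions $\tau_1,\tau_2,\tau_3$. So for $F=\{e,\tau_1,\tau_2,\tau_3\}$, $p(e)=0$, $p(\tau_i)=1$, no point of $X$ is sent into $[p]$ by any equivariant map to $2^G$.

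The repair is the product trick you already used in ($\Rightarrow$), which the paper also uses in (4)$\Rightarrow$(5). For each of the finitely many $g=h'h^{-1}\neq e$, choose $k_g$ with $gk_gx_0\neq k_gx_0$. This is possible because $\{w: gw\neq w\}$ is nonempty and open and $Gx_0$ is dense. Then work with the distal point $(k_gx_0)_g$ in a finite power of $X$.

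\textbf{Second gap: getting into $2^G$.} More seriously, you never carry out the passage from a distal point of a possibly connected $X$ to a distal point of $2^G$. You leave the thresholding claim ``to be checked'' under a hypothesis that you yourself note fails when $X$ is connected, namely that the orbit closure stays uniformly away from $t$. You then fall back on a zero-dimensional almost one-to-one extension in which $x_0$ lifts to a distal point, without constructing it. That construction is the real work in the paper's proof:
\begin{itemize}
\item Lemma~\ref{L-avoid orbit} builds nested disjoint open families with dense unions that contain the entire orbit of $x$.
\item Consequently the fibre over $x$ in the extension of Lemma~\ref{L-zero-dim extension} is a singleton (Lemma~\ref{L-unique preimage}).
\item Distality then lifts by Lemma~\ref{L-one-one to point-distal}.
\end{itemize}
As written, ($\Leftarrow$) is therefore not proved.

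Your thresholding idea does, however, work without any uniform gap, via the IP$^*$ characterization of distal points. Let $t\in(0,1)$ avoid the countable set $\{z(h):h\in G\}$, let $E\subseteq G$ be finite, and put $\epsilon=\min_{h\in E}|z(h)-t|>0$. The set of $g$ with $|(gz)(h)-z(h)|<\epsilon$ for all $h\in E$ is IP$^*$, because $z$ is IP$^*$-recurrent. It is contained in $\{g:(gz')|_E=z'|_E\}$. Hence $z'$ is IP$^*$-recurrent, and therefore distal. The same pull-back argument shows directly that $z$ itself is distal, with no factor-map theorem needed.

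With this argument and the product repair, your route would prove ($\Leftarrow$) while bypassing the zero-dimensional extension lemmas and Ellis's theorem, so it is more elementary than the paper's. But you have to actually supply this argument rather than leave it as an unchecked hypothesis.
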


The characterization in Theorem~\ref{thm:main2} is not intrinsic, and thus we continue to search for a possible intrinsic criterion. For this we introduce a notion of point-disal radicals for countable groups, and show the following result.

\begin{theorem}\label{thm:main3} For a countable discrete group $G$, if the set of all distal points is dense in $(2^G, G)$, then the point-distal radical of $G$ is trivial.
\end{theorem}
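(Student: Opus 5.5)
The plan is to reduce Theorem~\ref{thm:main3} to the fact that the Bernoulli shift is an effective action, using density and continuity. I will work with the definition of the point-distal radical as introduced later in the paper. I expect it to be, or to be contained in, the set of $g\in G$ that act trivially on every minimal point-distal $G$-system, that is, on every orbit closure $\overline{Gx}$ of a distal point $x$ in a compact metrizable $G$-space. The first step is to make sure that the orbit closures appearing below qualify as such systems, so that every element of the radical acts trivially on them.

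Recall that a distal point $x\in 2^G$ is in particular almost periodic. Hence $X_x:=\overline{Gx}$ is a minimal subsystem of $2^G$, and $x$ is a distal point of it, so $X_x$ is a point-distal minimal system (and compact metrizable). By the defining property of the radical, every $g$ in the point-distal radical satisfies $gy=y$ for all $y\in X_x$; in particular $gx=x$.

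Now assume that the distal points are dense in $2^G$, and fix $g$ in the point-distal radical. By the previous step, $g$ fixes every distal point of $2^G$. The fixed-point set $\{y\in 2^G: gy=y\}$ is closed, since the action is by homeomorphisms, and it contains a dense set, so it is all of $2^G$. The shift action is effective: if $g\neq e$, take $y=\mathbf 1_{\{e\}}$; then $gy=\mathbf 1_{\{g\}}$ or $\mathbf 1_{\{g^{-1}\}}$ depending on convention, which differs from $y$. Therefore $g=e$, and the radical is trivial.

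The main obstacle is matching the argument to the paper's actual definition of the point-distal radical. If the radical is instead defined via effective point-distal actions, such as those in Theorem~\ref{thm:main2}, or as a largest normal subgroup with some property, I would first show that any such element acts trivially on each minimal point-distal system $\overline{Gx}$. If needed, I would do this by passing to products or to the universal point-distal system, and then run the same density argument. Everything else is the routine closed-fixed-set argument above.
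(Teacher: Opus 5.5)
Your closing density argument is correct, but it is the easy part: it is essentially the implication (1)$\Rightarrow$(3) of Theorem~\ref{P-effective point-distal}. The step you call ``the defining property of the radical'' is not a definition. It is the whole content of the theorem.

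In the paper, the point-distal radical is the largest normal subgroup $H$ of $G$ with $H^{\{G\}}=H$. Here $H^{\{G\}}$ is the intersection of the kernels of all $G$-point-distal homomorphisms $H\to\Gamma$ into compact metrizable groups, with $G$ acting on $\Hom(H,\Gamma)$ by conjugation. This definition says nothing directly about how $H$ acts on minimal $G$-systems with a distal point. Your fallback of ``passing to products or to the universal point-distal system'' does not connect the two. An action of $H$ on such a system is a homomorphism into a homeomorphism group, not into a compact group, so there is nothing to which the hypothesis $H^{\{G\}}=H$ can be applied.

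Nor can you take the radical to be the set $K$ of elements acting trivially on all minimal systems with a distal point. Density is equivalent to $K=\{e_G\}$, so proving that the radical equals $K$ would answer the open Question~\ref{Q-effective vs trivial radical}. Only the inclusion of the radical in $K$ is available, and that inclusion is exactly what must be proved.

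The missing ingredient is structure theory. The paper applies the Veech structure theorem to get an almost $1$-$1$ extension $Y\to X$ with $Y$ strictly AI. It then shows, by transfinite induction along the AI tower, that any normal $H$ with $H^{\{G\}}=H$ acts trivially on each level $X_\beta$.
\begin{itemize}
\item At almost $1$-$1$ steps, $H$ fixes the singleton fibres over the orbit of a point, hence a dense set.
\item Limit steps are inverse limits.
\item At isometric steps the hypothesis is finally used. Choose a distal point $x_0\in X_\beta$ (Ellis) and form Abels' compact minimal $G$-space $I(x_0,X_{\beta+1})$ of fibre maps coming from the Ellis semigroup. Show that $\mathrm{Id}(x_0)$ is distal in it, and push it forward along $\phi\mapsto(h\mapsto\phi^{-1}h\phi)$. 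This shows that the restriction homomorphism $\sigma\colon H\to\mathrm{Isom}(\pi_{\beta,\beta+1}^{-1}(x_0))$ is $G$-point-distal, hence trivial. Normality of $H$ together with minimality then spreads triviality to all of $X_{\beta+1}$.
\end{itemize}
Without an argument of this kind, your proof assumes its conclusion.
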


Unfortunately we do not know yet if the converse is true. 

Using these criteria, we construct two examples of countable $2$-step nilpotent groups $G_1$ and $G_2$, neither of which is MAP, so that the set of distal points is dense in $(2^{G_1}, G_1)$ and not dense in $(2^{G_2}, G_2)$. This suggests that there is a very fine line between groups for which Question~\ref{XYQ} has positive and negative answers. In particular, there is unlikely a characterization of the groups in algebraic terms.

For any countably infinite group $G$, the constant sequences $\overline{0}$ and $\overline{1}$ are almost automorphic points (and therefore distal points) in  $2^G$. We refer to them as {\em trivial} distal (or almost automorphic) points. We also consider the following question which is opposite to Questions~\ref{XYQ} and \ref{AAP}.

\begin{question}\label{Trivial} For which countably infinite group $G$, $\overline{0}$ and $\overline{1}$ are the only distal (or almost automorphic) points in $(2^G, G)$?
\end{question}

We have the following characterization, which is related to a notion also introduced by von Neumann \cite{vN}.

\begin{theorem}\label{thm:main4} For a countable discrete group $G$, the following are equivalent:
\begin{enumerate}
\item $\overline{0}$ and $\overline{1}$ are the only distal points in $(2^G, G)$;
\item $\overline{0}$ and $\overline{1}$ are the only almost automorphic points in $(2^G, G)$;
\item $G$ is minimally almost periodic (minAP).
\end{enumerate}
\end{theorem}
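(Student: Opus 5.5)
We prove (1)$\Rightarrow$(2)$\Rightarrow$(3)$\Rightarrow$(1). The first implication is immediate: every almost automorphic point is distal, so if the only distal points are the constants, the same holds for almost automorphic points.

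For (2)$\Rightarrow$(3) we argue by contraposition. Suppose $G$ is not minAP. Then there is a nontrivial homomorphism $\phi\colon G\to K$ into a compact metrizable group. Since $G$ is countable, we may take $K$ to be a closed subgroup of a product of countably many unitary groups, and we may assume $\phi(G)$ is dense in $K$. Because $\phi$ is nontrivial, $K$ is a nontrivial compact metrizable group.

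Choose $g_0\in G$ with $\phi(g_0)\ne e$. Next choose a closed set $A\subseteq K$ whose boundary is Haar-null, and hence has empty interior, arranged so that $e\in\operatorname{int}A$ and $\phi(g_0)\notin A$. For instance, take a small closed ball around $e$ under an invariant metric; all but countably many radii give boundaries of Haar measure zero. Now translate $A$ by a suitable $k\in K$ so that $\phi(G)k$ avoids $\partial A$. This is possible because $\bigcup_{g}\phi(g)^{-1}\partial A$ is a countable union of closed nowhere dense sets, so the Baire category theorem supplies such a $k$. We also keep $k$ near $e$ so that the two memberships $e\in\operatorname{int}A$ and $\phi(g_0)\notin A$ persist.

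Define $x\in 2^G$ by $x(g)=1$ iff $\phi(g)k\in A$. The map $\pi\colon K\to 2^G$, $\pi(h)(g)=1_{A}(\phi(g)h)$, is continuous at every point $h$ with $\phi(G)h\cap\partial A=\emptyset$. The standard Sturmian/Toeplitz-type argument then shows that $x$ is almost automorphic: any net $g_i$ with $g_ix\to y$ and $g_i^{-1}y\to z$ gives, after passing to a subnet, $\phi(g_i)\to u$ in $K$. Continuity of $\pi$ at $k$ lets us compare $y$ with $\pi(uk)$ on coordinates where this is determined, and then $z=x$. This is the step needing care, since $\pi$ is not continuous everywhere. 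One can handle it cleanly by noting that the orbit closure of $x$ is an almost 1-1 extension of $K$ with $\pi(k)=x$ a point whose fiber is a singleton, which is exactly the standard characterization of almost automorphy. Finally, $x(e)=1$ and $x(g_0)=0$, so $x$ is a nonconstant almost automorphic point, contradicting (2).

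For (3)$\Rightarrow$(1), let $x\in 2^G$ be distal. Its orbit closure $X$ is then minimal, and the point $x$ is distal. I expect this to be the main obstacle. The key idea is that a distal point in a minimal system projects in a controlled way to the maximal equicontinuous factor. More precisely, I would use the Ellis semigroup. For a distal point $x$ in the minimal system $X$, the point $x$ lies in $uX$ for every minimal idempotent $u$, so $ux=x$ for all such $u$. Since $G$ is minAP, the maximal equicontinuous factor of any minimal $G$-system is trivial, because it is a compact monothetic-type group action whose acting homomorphism must be trivial. Hence the regionally proximal relation generates all of $X\times X$. I would try to sharpen this to show that every point of $X$ is proximal to $x$, and then use distality of $x$ to force $X=\{x\}$.

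A more direct route, which I would try first, uses the Furstenberg structure theory. The orbit closure of a distal point is point-distal, and Veech's theorem gives an almost 1-1 modification that is a tower of isometric extensions over the trivial system. The first nontrivial isometric extension of a point would produce an equicontinuous factor, namely a nontrivial compact homogeneous space $K/L$ with $G$ acting through a homomorphism into $K$. That contradicts minAP, so $X$ is a fixed point. A fixed point in $2^G$ is constant, so $x\in\{\overline0,\overline1\}$.
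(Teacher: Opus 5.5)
Your cycle of implications is correct once you drop two side remarks, discussed below. For (3)$\Rightarrow$(1), your second route is essentially the paper's. The paper quotes Veech's theorem that a nontrivial point-distal minimal flow has a nontrivial equicontinuous factor. You recover this from the Veech structure theorem by noting that the first nontrivial level of the strictly AI tower must be an isometric extension of a point. The tower also contains almost 1-1 steps, not only isometric ones, but this is harmless because an almost 1-1 extension of a point is a point. Your Ellis-semigroup sketch is not a proof and should be discarded: the ``sharpening'' you hope for is essentially Veech's theorem itself.

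The genuine difference is in (2)$\Rightarrow$(3). The paper builds nothing new there. Since $\pi(G)$ is MAP, the theorem of Section~\ref{sec:3} gives a T$\Delta^*$-set of $\pi(G)$ containing $e$ but not $\pi(g)$. This set is a ball of a totally bounded left-invariant metric whose radius avoids the countable set of distances, so it is clopen, and it is $\Delta^*$ by pigeonhole. Its preimage is a T$\Delta^*$-set of $G$, and almost automorphy follows from Lemmas~\ref{lem:3.7} and~\ref{lem:3.8}.

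You instead code the rotation on $K$ by $x(g)=1_A(\phi(g)k)$ and verify almost automorphy directly from the definition using compactness of $K$. This is self-contained and avoids the $\Delta^*$ machinery; the paper's route is shorter given Section~\ref{sec:3}. Your construction can be simplified in two ways:
\begin{enumerate}
\item The Haar-null condition is unnecessary, since the boundary of any closed set is nowhere dense.
\item You can skip Baire category entirely: choose $r$ so that the sphere $\{d(\cdot,e)=r\}$ misses the countable set $\phi(G)$ and take $k=e$. This is the paper's trick.
\end{enumerate}

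Your ``clean'' alternative is false and must go: $\overline{Gx}$ need not be an almost 1-1 extension of $K$. Take $G=\mathbb{Z}$ and $K=\mathbb{Z}_2$ with the $2$-adic metric. Every ball is a clopen subgroup, say $A=2\mathbb{Z}_2$. Then $x$ is periodic with a two-point orbit closure, which does not even map onto $K$. Rely instead on your direct net argument, which is sound. For fixed $h$, we have $\phi(hg_i^{-1})uk\to\phi(h)k\notin\partial A$. Hence eventually $y(hg_i^{-1})=1_A(\phi(hg_i^{-1})uk)=x(h)$, so every limit of $g_i^{-1}y$ equals $x$.
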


It turns out that the Tarski monster group is minAP, along with many other examples such as $\mbox{Alt}(\mathbb{N})$, Hall's group $\mathbb{H}$, and $SL(n,K)$ and $PSL(n, K)$ for $n\geq 2$ and a countably infinite field $K$. 

Finally, we note that the collection of countable groups $G$ for which the set of distal points is dense in $(2^G, G)$ is closed under taking subgroups and finite-index extensions. 

The rest of this paper is organized as follows. In Section~\ref{sec:2} we characterize distal points in terms of IP*-recurrence and in terms of a new notion of TIP*-sets. In Section~\ref{sec:3} we frist characterize almost automorphic points in terms of $\Delta^*$-recurrence and T$\Delta^*$-sets, and then prove Theorem~\ref{thm:main1}. In Section~\ref{sec:4} we prove Theorem~\ref{thm:main2} and give a number of other characterizations by considering continuous actions of $G$ on compact metrizable spaces. In Section~\ref{sec:6} we introduce the notion of point-distal radical and prove Theorem~\ref{thm:main3}. Finally, in Section~\ref{sec:5} we prove some related results, including the closure under finite-index extension and Theorem~\ref{thm:main4}. 

{\em Acknowledgments.} We would like to thank Xiangdong Ye for bringing Question~\ref{XYQ} to our attention and for discussions on \cite{XY}.

\section{Distal Points and IP*-Recurrence\label{sec:2}}

In this section we recall some basic concepts around the notion of distality. Following Furstenburg \cite{Fur}, we characterize distal points in terms of IP*-recurrence for arbitrary countably infinite groups. Furthermore, we introduce a notion of TIP*-sets for countable groups and show that it corresponds to distal points in the Bernoulli shift system. 

\subsection{Distal points}\hfill\ 

All concepts and results without explicit references can be found in \cite{Fur}.

 A {\em dynamical system} consists of a compact metric space $X$ together with a group or semigroup $G$ acting on $X$ by continuous transformations. A dynamical system $(X,G)$ is {\em distal} if whenever $x\neq y\in X$ there is $\epsilon=\epsilon(x,y)>0$ such that for all $g\in G$, $d(gx, gy)>\epsilon$. Two points $x, y\in X$ are {\em proximal} if for some sequence $\{g_n\}$ in $G$, we have $d(g_nx,g_ny)\to 0$; they are {\em distal} if they are not proximal.

Note that for a dynamical system, being distal is a topological property, meaning that if $d'$ is another compatible metric on $X$, then $(X, G)$ is distal with respect to $d$ if and only if $(X,G)$ is distal with respect to $d'$.

\begin{definition}[Following \cite{DLX}] Let $G$ be a topological semigroup and $S\subseteq G$.
\begin{enumerate}
\item $S$ is {\em (left) syndetic} if there is a nonempty compact subset $K$ of $G$ such that for any $g\in G$ there is $k\in K$ such that $kg\in S$.
\item $S$ is {\em thick} if for any nonempty compact subset $K$ of $G$ there is $g\in G$ such that $Kg\subseteq S$.
\end{enumerate}
\end{definition}

If $G$ is countable discrete then the compact subsets of $G$ are exactly the finite subsets of $G$. If $\pi: G\to G$ is an automorphism, then for any syndetic or thick subset $S$ of $G$, $\pi(S)$ is syndetic or thick, respectively. In particular, if $G$ is a group, $S$ is syndetic or thick, and $h\in G$, then $h^{-1}Sh$ is syndetic or thick, respectively.

The concepts of syndetic and thick are ``dual" in the sense that $S$ is syndetic if and only if for any thick $A$, $S\cap A\neq\varnothing$.

\begin{definition} Let $(X, G)$ be a dynamical system. A point $x\in X$ is {\em uniformly recurrent} if for any open set $U$ with $x\in U$, the set $\{g\in G\,:\, gx\in U\}$ is syndetic.
\end{definition}

The following is a folklore about uniform recurrence.

\begin{proposition}\label{P-minimal vs uniformly recurrent} Let $(X, G)$ be a dynamical system.
\begin{enumerate}
\item If $(X, G)$ is minimal, then every point $x\in X$ is uniformly recurrent.
\item If $x\in X$ is uniformly recurrent, then the orbit closure $\overline{Gx}$ is a minimal $G$-invariant subset of $X$.
\end{enumerate}
\end{proposition}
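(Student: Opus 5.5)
For part (1), I will fix a minimal system $(X,G)$, a point $x\in X$ and an open neighborhood $U$ of $x$, and show directly that $N(x,U)=\{g\in G: gx\in U\}$ is syndetic. First I shrink $U$ to an open $V\ni x$ with $\overline{V}\subseteq U$. Minimality means every orbit is dense. So for each $y\in X$ there is $k\in G$ with $ky\in V$, which gives $X=\bigcup_{k\in G}k^{-1}V$. By compactness there is a finite $K\subseteq G$ with $X=\bigcup_{k\in K}k^{-1}V$. Then for any $g\in G$ the point $gx$ lies in some $k^{-1}V$, so $kgx\in V\subseteq U$. Thus $K$ witnesses that $N(x,U)$ is syndetic. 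The compact subsets of the discrete (semi)group here are exactly its finite subsets, so a finite $K$ suffices.

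For part (2), I will assume $x$ is uniformly recurrent and set $Y=\overline{Gx}$. It suffices to show that $x\in\overline{Gy}$ for every $y\in Y$. Indeed, then $\overline{Gy}\supseteq\overline{Gx}=Y$, so every orbit in $Y$ is dense, and hence $Y$ is minimal. Fix $y\in Y$ and an open $U\ni x$; choose an open $V\ni x$ with $\overline V\subseteq U$. The set $S=N(x,V)$ is syndetic, so there is a finite $K$ with $KG\cdots$, more precisely with the property that for every $g$ some $k\in K$ has $kg\in S$.

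Since $y\in\overline{Gx}$, I pick $g_n$ with $g_nx\to y$. For each $n$ there is $k_n\in K$ with $k_ng_nx\in V$. Because $K$ is finite, I can pass to a subsequence on which $k_n=k$ is constant. Then $kg_nx\in V$ for all $n$, and by continuity of $k$ we have $kg_nx\to ky$. Hence $ky\in\overline V\subseteq U$. So every neighborhood of $x$ meets $Gy$, which gives $x\in\overline{Gy}$.

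The only delicate point is this closure step: it requires passing to $\overline V$ rather than $V$, and it uses the finiteness (compactness) of $K$ to extract a constant $k$. For a general topological semigroup one would instead take a convergent subnet of $k_n$ in the compact set $K$ and use joint continuity of the action. Since the paper's setting is countable discrete groups, the finite case above suffices.
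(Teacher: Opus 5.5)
The paper does not prove this proposition; it cites it as folklore. Your argument is the standard one and it is correct.

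In (1), compactness applied to the open cover $\{k^{-1}V\}_{k\in G}$ of $X$ gives a finite $K$ with $kg\in N(x,U)$ for every $g$. That is exactly the paper's definition of syndeticity.

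In (2), three things together give $ky\in U$: a constant $k$ chosen from the finite $K$, continuity of $k$, and $\overline V\subseteq U$. From $ky\in U$ you get $x\in\overline{Gy}$. Since $\overline{Gy}$ is closed and $G$-invariant, it follows that $\overline{Gy}=Y$.

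Some cosmetic points:
\begin{itemize}
\item The shrinking to $\overline V\subseteq U$ is unnecessary in (1).
\item The stray phrase ``with $KG\cdots$'' should be deleted.
\item In (2) you could avoid sequences by noting that $Gx\subseteq\bigcup_{k\in K}k^{-1}\overline V$. This set is closed, so it contains $Y$.
\end{itemize}
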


\begin{theorem}[Auslander \cite{Auslander}, Ellis \cite{Ellis}; also see {\cite[Theorem 8.7]{Fur}}]\label{AE} Let $(X, G)$ be a dynamical system. Then every point of $X$ is proximal to a uniformly recurrent point in its orbit closure.
\end{theorem}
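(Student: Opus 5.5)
The plan is to use the Ellis enveloping semigroup. Let $E=E(X,G)$ be the closure of $\{x\mapsto gx: g\in G\}$ in $X^X$ with the product (pointwise convergence) topology. Then $E$ is compact by Tychonoff. It is a semigroup under composition, and for each fixed $q\in E$ the map $p\mapsto pq$ is continuous on $E$. This right-topological structure is all we need.

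First I will find a minimal left ideal and an idempotent in it. By Zorn's lemma applied to closed left ideals (each $Ep$ is a closed left ideal, being the image of the compact set $E$ under the continuous map $p'\mapsto p'p$), there is a minimal left ideal $I$. It is closed, since $Ep\subseteq I$ for $p\in I$ and minimality forces $Ep=I$. The Ellis--Numakura lemma then gives an idempotent $u\in I$, so $u^2=u$. This is the one genuinely nontrivial step. I would prove it by applying Zorn's lemma to nonempty closed subsemigroups of $I$ to obtain a minimal one, $A$. For $a\in A$, the set $Aa$ is a closed subsemigroup of $A$, so $Aa=A$. Then $\{b\in A: ba=a\}$ is a nonempty closed subsemigroup of $A$, hence equals $A$, which gives $a^2=a$.

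Set $y=ux$. Since $u$ is a pointwise limit of a net $g_\alpha$, we have $y=\lim g_\alpha x\in\overline{Gx}$. For proximality, note that $g_\alpha y\to uy=u^2x=ux=y$ while $g_\alpha x\to ux=y$. Hence $d(g_\alpha x,g_\alpha y)\to 0$, and since $X$ is metric we can extract a sequence $g_n$ with $d(g_nx,g_ny)\to0$.

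For uniform recurrence of $y$, I will show that $\overline{Gy}=Iy$ is a minimal set and then invoke Proposition~\ref{P-minimal vs uniformly recurrent}(1). Since $Eu$ is a left ideal contained in $I$, minimality gives $Eu=I$, so $Ey=Eux=Ix$. Moreover $Ey=\overline{Gy}$, because $p\mapsto py$ is continuous and $E$ is compact. Now let $Y\subseteq Ix$ be a nonempty closed invariant set and pick $z=px\in Y$ with $p\in I$. Then $Ez\subseteq Y$, and $Ep=I$ by minimality, so $Ix=Epx=Ez\subseteq Y$. Thus $\overline{Gy}$ is minimal, and every point of it, in particular $y$, is uniformly recurrent. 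I expect the only real obstacle to be the idempotent lemma; the rest is bookkeeping with one-sided continuity.
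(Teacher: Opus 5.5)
Your proposal is correct: it is the classical enveloping-semigroup argument of Ellis \cite{Ellis} (a minimal left ideal $I\subseteq E(X)$, an idempotent $u\in I$ via the Ellis--Numakura lemma, and $y=ux$), which is the standard proof of the result that the paper only quotes from \cite{Auslander, Ellis, Fur} without proving it. One small wording point: Zorn's lemma directly gives $I$ minimal among \emph{closed} left ideals, but that is all you ever use, since the ideals $Ep$ and $Eu$ you compare with $I$ are closed.
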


\begin{definition} Let $(X, G)$ be a dynamical system and $x\in X$.
\begin{enumerate}
\item (Following \cite{DLX} and \cite{XY}) $x$ is {\em distal} if it is not proximal to any other point in its orbit closure.
\item $x$ is {\em absolutely distal} if it is not proximal to any other point in $X$.
\end{enumerate}
\end{definition}

In \cite{Fur} (and other early works) absolutely distal points were called distal points. We use the terminology of absolute distality here to distinguish the two notions. It is obvious that absolutely distal points are distal. The converse is false.


If $G$ is a group then the set of all distal points and the set of all absolutely distal points are both $G$-invariant. Theorem \ref{AE} has the following corollary.

\begin{corollary}\label{minimal} A distal point is uniformly recurrent.
\end{corollary}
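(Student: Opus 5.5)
The plan is to apply Theorem~\ref{AE} to the distal point itself and then use distality to collapse the proximal pair it produces. Let $x\in X$ be a distal point of the dynamical system $(X,G)$. By Theorem~\ref{AE}, there is a uniformly recurrent point $y\in\overline{Gx}$ such that $x$ and $y$ are proximal. Since $x$ is distal, it is not proximal to any point of its orbit closure other than itself. The point $y$ lies in $\overline{Gx}$ and is proximal to $x$, so $y=x$. Hence $x$ is uniformly recurrent.

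The only point that needs care is that Theorem~\ref{AE} supplies $y$ inside the orbit closure $\overline{Gx}$, not merely somewhere in $X$. This is exactly the clause in the statement of Theorem~\ref{AE}, and it is what makes the weaker notion of distality (relative to the orbit closure) sufficient, rather than absolute distality.

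There is a degenerate case to check: whether ``proximal'' is meant to allow $y=x$. This causes no trouble, because in that case the conclusion $y=x$ holds trivially and the argument goes through unchanged. I do not expect any step here to be an obstacle.

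As a byproduct, Proposition~\ref{P-minimal vs uniformly recurrent}(2) then shows that $\overline{Gx}$ is minimal for every distal point $x$.
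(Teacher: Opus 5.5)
Your proposal is correct and is exactly the derivation the paper intends: the corollary is stated as an immediate consequence of Theorem~\ref{AE}, where the uniformly recurrent point proximal to $x$ lies in $\overline{Gx}$ and therefore must equal $x$ by distality. Your remark that the orbit-closure clause is what makes the weaker (relative) notion of distality sufficient is the right point to flag.
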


\subsection{IP*-recurrence}

The concepts of IP-systems, IP-sets, IP*-sets, and IP*-recurrence were all defined by Furstenburg \cite{Fur} for $\mathbb{N}$ or $\mathbb{Z}$. Here we extend them to arbitrary countable semigroups with identity, and introduce a notion of translational IP*-sets for countable groups.

\begin{definition} Let $\mathcal{F}$ be the set of all finite nonempty subsets of $\mathbb{N}$, i.e., $\mathcal{F}=[\mathbb{N}]^{<\omega}\setminus\{\varnothing\}$. An {\em $\mathcal{F}$-sequence} is a sequence $\{x_\alpha\}$ in $G$ indexed by elements $\alpha\in\mathcal{F}$. If $G$ is a semigroup then an {\em IP-system} on $G$ is an $\mathcal{F}$-sequence where
$$ x_{\{i_1,i_2,\dots, i_k\}}=x_{i_1}x_{i_2}\cdots x_{i_k} $$
where $i_1<i_2<\cdots<i_k$. The set $\{x_\alpha\}$ is called an {\em IP-set} on $G$.
\end{definition}

If $\pi: G\to G$ is an automorphism and $\{g_{\alpha}\}$ is an IP-set on $G$, then $\{\pi(g_{\alpha})\}$ is an IP-set. In particular, if $G$ is a group, then for any IP-set $\{g_{\alpha}\}$ on $G$ and $h\in G$, the conjugated set $\{h^{-1}g_{\alpha}h\}$ is an IP-set.

\begin{definition} Let $G$ be a semigroup. A subset $S\subseteq G$ is an {\em IP*-set} if for any IP-set $A$ on $G$, $S\cap A\neq\varnothing$.
\end{definition}
If $G$ contains an identity $e_G$ then any IP*-set must contain $e_G$; this is because, the trivial IP-system with all elements being $e_G$ gives the singleton $\{e_G\}$ as an IP-set.
If $\pi:G\to G$ is an automorphism and $S\subseteq G$ is an IP*-set then so is $\pi(S)$. In particular, if $G$ is a group, $S\subseteq G$ is an IP*-set, and $h\in G$, then $h^{-1}Sh$ is an IP*-set.

\begin{definition} Let $(X, G)$ be a dynamical system and $x\in X$. $x$ is {\em IP*-recurrent} if for any open set $U$ with $x\in U$, the set $\{g\in G\,:\, gx\in U\}$ is an IP*-set.
\end{definition}

\begin{proposition}[\cite{Fur} for $G=\mathbb{N}$ or $\mathbb{Z}$; {\cite[Theorem 4]{DLX}} for the general case]  Let $(X, G)$ be a dynamical system, where $G$ is a semigroup with identity, and $x\in X$. Then $x$ is distal if and only if $x$ is IP*-recurrent.
\end{proposition}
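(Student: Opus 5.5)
We prove the two directions separately. The main tool is the Ellis enveloping semigroup and its idempotents, in the spirit of Furstenberg's argument for $\mathbb{Z}$ and of \cite{DLX}. Let $E=E(X,G)$ be the closure of $G$ in $X^X$ with the pointwise topology. We may work with a countable-index version via nets, or simply with ultrafilters: $E$ is a compact right-topological semigroup. For a point $y\in\overline{Gx}$, the points $x$ and $y$ are proximal iff $px=py$ for some $p\in E$. Moreover, $x$ is distal iff $ux=x$ for every idempotent $u$ in every minimal left ideal of $E$.

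\textbf{Distal $\Rightarrow$ IP*-recurrent.} Let $\{g_\alpha\}$ be an IP-system and $U\ni x$ open. We must find $\alpha$ with $g_\alpha x\in U$. Using Hindman-type ultrafilter methods, take an idempotent ultrafilter $p$ in $\beta G$ concentrating on every tail $\mathrm{FP}(g_n)_{n\ge m}$; such an idempotent exists by Ellis–Numakura applied to the closed subsemigroup $\bigcap_m \overline{\mathrm{FP}(g_n)_{n\ge m}}$. Let $y=p\text{-}\lim g x = px$. Then $py=p(px)=px=y$, since $p$ is idempotent and the action extends to $\beta G$. Now consider the pair $(x,y)$ with $y\in\overline{Gx}$. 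Because $p$ is idempotent, $py=px$, so $x$ and $y$ are proximal via $p$: indeed $p x=y=py$. Distality then forces $y=x$, and hence $\{g: gx\in U\}\in p$. Since $p$ concentrates on $\mathrm{FP}(g_n)$, this set meets the IP-set. The step needing care is that $G$ is only a semigroup with identity, and that the action extends to $\beta G$ with $(pq)x=p(qx)$. This holds by continuity of each $g$ and the standard definition of the ultrafilter product.

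\textbf{IP*-recurrent $\Rightarrow$ distal.} Suppose $x$ is IP*-recurrent, and let $y\in\overline{Gx}$ be proximal to $x$. By Theorem~\ref{AE}-type arguments, choose a minimal left ideal $L\subseteq E$ and $p\in L$ with $px=py$. Then there is an idempotent $u\in L$ with $up=p$, and $ux$ and $uy$ are also proximal with $u(ux)=ux$.

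The key claim is that $ux=x$ for every idempotent $u\in E$. Given an idempotent $u$ and a neighborhood $V$ of $ux$, we build inductively $g_1,g_2,\dots\in G$ with all finite products $g_\alpha$ satisfying $g_\alpha x\in V$. This is Furstenberg's standard construction, carried out with the order of multiplication matching the IP-system convention $x_{i_1}\cdots x_{i_k}$; one uses $u\cdot u=u$ and right-continuity of $q\mapsto qz$. The IP*-recurrence of $x$ then gives $\alpha$ with $g_\alpha x$ in any prescribed neighborhood of $x$, so $ux\in\overline{V'}$ for every neighborhood $V'$ of $x$, hence $ux=x$. Applying the same reasoning to the IP*-recurrent points $gx$ and to $y$ via the minimal ideal structure (namely $y=qx$ with $uy=y$ for suitable $u$, combined with $px=py$) yields $y=x$.

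\textbf{Main obstacle.} I expect the main obstacle to be the inductive construction of an IP-system from an idempotent in the Ellis semigroup when $X$ is metrizable but $G$ is an arbitrary (noncommutative) semigroup. There, left/right continuity conventions must be matched carefully to the ordering in $x_{i_1}\cdots x_{i_k}$.
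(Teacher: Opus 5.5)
The paper does not prove this proposition. It quotes it from \cite{Fur} and \cite[Theorem 4]{DLX}, so there is no internal argument to compare against.

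Your proof is correct and self-contained. It uses idempotents in $\beta G$ for one direction and in $E(X)$ for the other, and it works for semigroups with identity, as the statement requires. What it adds over the citation is that one sees exactly what is needed: pointwise closure, continuity of each $g$, and Ellis--Numakura. Your key claim is a Galvin-type statement of the kind \cite{DLX} concerns.

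Three places should be tightened.

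(i) Fix the product on $\beta G$ by $A\in pq \iff \{s: s^{-1}A\in q\}\in p$, where $s^{-1}A=\{t: st\in A\}$. With the increasing-order convention $x_{i_1}\cdots x_{i_k}$, two facts hold in this convention. First, $\bigcap_m\overline{\mathrm{FP}(g_n)_{n\ge m}}$ is a subsemigroup, because for $s=g_{i_1}\cdots g_{i_k}$ one has $s^{-1}\mathrm{FP}(g_n)_{n\ge m}\supseteq \mathrm{FP}(g_n)_{n>i_k}$. Second, $(pq)x=p(qx)$. Both facts must be proved in the same convention; with the opposite one the tail set need not be a subsemigroup.

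(ii) The obstacle you anticipate in the key claim does not arise. Suppose $g_1,\dots,g_{n-1}$ are chosen so that $hx\in V$ and $h(ux)\in V$ for every finite product $h$ of them in increasing order. Choose $g_n\in G$ with both $g_nx$ and $g_n(ux)$ in $V\cap\bigcap_h h^{-1}V$. This is possible because $u$ is a pointwise limit of elements of $G$ and $u(ux)=ux$. The new generator stands rightmost in $x_{i_1}\cdots x_{i_k}$, so it acts first, and the invariant passes from $n-1$ to $n$.

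(iii) Replace your final sentence with the actual argument.
\begin{itemize}
\item By the key claim, $ux=x$ for an idempotent $u\in L$. Hence $x\in Lx$, so $\overline{Gx}=Lx$ is minimal and $y$ is almost periodic.
\item Since $L=Ep$ and $px=py$, we get $rx=ry$ for all $r\in L$.
\item The set $\{r\in L: ry=y\}$ is a closed subsemigroup. It is nonempty because $Ly$ is a closed invariant subset of the minimal set $\overline{Gx}$ and therefore contains $y$.
\item By Ellis--Numakura this set contains an idempotent $v$, and then $x=vx=vy=y$.
\end{itemize}
The detour through the points $gx$ is unnecessary. For a general semigroup it is not even clear that $gx$ is IP*-recurrent, since IP-sets cannot be conjugated.
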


IP*-sets are obviously closed under taking supersets. It follows from the following facts that IP*-sets on any semigroup form a {\em filter}, i.e., the collection consists of nonempty sets and is closed under both intersection and taking supersets.

\begin{proposition}[{\cite[Proposition 8.13]{Fur}}]\label{Ramsey} On any semigroup $G$, IP-sets have the Ramsey property, i.e., if $S\subseteq G$ is an IP-set and $S=S_1\cup S_2$, then either $S_1$ or $S_2$ contains an IP-set.
\end{proposition}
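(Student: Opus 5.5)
We prove the Ramsey property (Proposition~\ref{Ramsey}) by Hindman-style idempotent ultrafilter methods, which are cleaner than a direct combinatorial induction. Let $S=\{x_\alpha\}$ be the IP-set generated by $x_1,x_2,\dots$ and suppose $S=S_1\cup S_2$. For each $m$ put
$$S^{(m)}=\{x_\alpha\,:\,\alpha\in\mathcal F,\ \min\alpha\ge m\},$$
the IP-set generated by the tail $x_m,x_{m+1},\dots$. Write $\beta G$ for the space of ultrafilters on $G$, with the usual extension of multiplication, $p\cdot q=\{A: \{g: g^{-1}A\in q\}\in p\}$ (left or right conventions as appropriate for the order of products $x_{i_1}\cdots x_{i_k}$ with $i_1<\dots<i_k$). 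This makes $\beta G$ a compact right topological semigroup.

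First, the set $T=\bigcap_m \overline{S^{(m)}}$ of ultrafilters containing every tail IP-set is a nonempty closed subset of $\beta G$, by compactness and since the $S^{(m)}$ decrease. Next, $T$ is a subsemigroup. Given $p,q\in T$ and $m$, for each $x_\alpha$ with $\min\alpha\ge m$ we have $x_\alpha S^{(\max\alpha+1)}\subseteq S^{(m)}$. Since $S^{(\max\alpha+1)}\in q$, the set $\{g:\ g^{-1}S^{(m)}\in q\}$ contains $S^{(m)}\in p$, and hence $S^{(m)}\in p\cdot q$. By the Ellis--Numakura lemma, $T$ therefore contains an idempotent $p=p\cdot p$.

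Since $S\in p$ and $p$ is an ultrafilter, one of $S_1,S_2$ lies in $p$; call it $A$. It remains to show that any member $A$ of an idempotent $p\in T$ contains an IP-set, and this is the main step. For $B\in p$ set $B^\star=\{g\in B: g^{-1}B\in p\}$. Idempotence gives $B^\star\in p$, and for $g\in B^\star$ we get $(g^{-1}B^\star)^\star\in p$. We choose indices inductively. Put $A_1=A^\star$ and pick $y_1=x_{\alpha_1}\in A_1$. This is possible because $A_1\cap S^{(1)}\in p$ is nonempty, and every element of $S^{(m)}$ has the form $x_\alpha$.

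At stage $n$, we will have $A_n\in p$ such that all finite products $y_{i_1}\cdots y_{i_k}$ with $i_1<\dots<i_k<n$ lie in $A$. The index sets $\alpha_1<\alpha_2<\cdots$ are chosen so that $\max\alpha_i<\min\alpha_{i+1}$. We then choose $y_n=x_{\alpha_n}\in A_n\cap S^{(\max\alpha_{n-1}+1)}$ and set $A_{n+1}=A_n\cap (y_n^{-1}A_n)^\star$, which again lies in $p$.

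Because the $\alpha_i$ are increasing blocks, the products $y_{i_1}\cdots y_{i_k}$ are exactly the elements $x_{\alpha_{i_1}\cup\dots\cup\alpha_{i_k}}$, so the $y_n$ generate an IP-set contained in $S$. The standard induction shows that all such products lie in $A$. The obstacle is the bookkeeping with $\star$, making sure the right coset (left versus right translation) matches the product order in the IP-system definition.

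A purely combinatorial alternative, via Hindman's theorem applied to colorings of $\mathcal F$, would also work: color $\alpha$ by whether $x_\alpha\in S_1$, and obtain a block sequence whose finite unions are monochromatic. Either way, the IP-set $\{y_\alpha\}$ found inside $S_1$ or $S_2$ proves the proposition.
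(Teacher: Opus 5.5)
Your proof is correct, and it takes a different route from the one behind the citation. The paper gives no argument of its own and simply quotes Furstenberg, where the result rests on Hindman's theorem proved by topological dynamics (proximality to uniformly recurrent points, in the spirit of the Auslander--Ellis theorem). The step to a general IP-set is essentially the pull-back you sketch at the end: colour $\alpha\in\mathcal{F}$ according to which $S_i$ contains $x_\alpha$, find an increasing block sequence $\alpha_1<\alpha_2<\cdots$ whose finite unions are monochromatic, and use $x_{\alpha_{i_1}\cup\cdots\cup\alpha_{i_k}}=x_{\alpha_{i_1}}\cdots x_{\alpha_{i_k}}$.

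Your main argument instead runs the Galvin--Glazer idempotent-ultrafilter proof directly in $\beta G$. The reduction to $\mathcal{F}$ confines all noncommutativity to the block condition, so one theorem about $(\mathcal{F},\cup)$ serves every semigroup at once. Your route needs no reduction, but it must supply an idempotent containing $S$. Your semigroup $T$ of ultrafilters containing every tail $S^{(m)}$ does this neatly, and the tail constraint on $y_n$ even yields an IP-subsystem of $\{x_\alpha\}$.

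You can drop your hedge about conventions. Reading $g^{-1}B$ as $\{h\in G: gh\in B\}$, the map $p\mapsto p\cdot q$ is continuous, so Ellis--Numakura applies to $T$. Also $y_1\in A^\star$ and $y_2\in y_1^{-1}A^\star$ give $y_1y_2\in A^\star$, i.e.\ products in increasing index order, as the IP-system definition requires.

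Two points should be stated explicitly. First, choosing $y_n\in A_n$ rather than $y_n\in A_n^\star$ is legitimate only because every $A_n$ satisfies $A_n=A_n^\star$. This holds for $A_1=A^\star$ by your lemma and is preserved under $\star$ and finite intersections, and it is what makes $y_n^{-1}A_n\in p$. Second, the invariant that actually carries the induction is $y_{i_1}\cdots y_{i_k}\in A_{i_1}$ for $i_1<\cdots<i_k$. It is proved by induction on $k$ from $A_{n+1}\subseteq A_n$ and $A_{n+1}\subseteq y_n^{-1}A_n$, and it is stronger than saying the products lie in $A$.
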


\begin{proposition}[{\cite[Lemma 9.5]{Fur}}] On any semigroup $G$, IP*-sets are closed under intersection.
\end{proposition}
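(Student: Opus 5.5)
We prove that if $S_1,S_2\subseteq G$ are IP*-sets then $S_1\cap S_2$ is an IP*-set, arguing by contradiction and using only the Ramsey property of IP-sets (Proposition~\ref{Ramsey}).

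Suppose $S_1\cap S_2$ is not an IP*-set. Then there is an IP-set $A$ on $G$, generated by an IP-system $\{x_\alpha\}$, with $A\cap S_1\cap S_2=\varnothing$. Every element of $A$ therefore lies outside $S_1$ or outside $S_2$, so
$$A=(A\setminus S_1)\cup(A\setminus S_2).$$
By Proposition~\ref{Ramsey}, one of the two pieces contains an IP-set $B$. If $B\subseteq A\setminus S_1$, then $B\cap S_1=\varnothing$, which contradicts that $S_1$ is an IP*-set. If $B\subseteq A\setminus S_2$, then $B\cap S_2=\varnothing$, which contradicts that $S_2$ is an IP*-set. Hence $S_1\cap S_2$ meets every IP-set, so it is an IP*-set. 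Induction then gives closure under all finite intersections.

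The main point to check is that the Ramsey property is stated for the \emph{set} $A$, not for the indexing system. So "contains an IP-set" must mean a subset of the form $\{y_\beta\}$ generated by some IP-system, which is exactly what the definition of IP*-set quantifies over. No subtlety about sub-IP-systems arises, because the IP*-definition only asks for nonempty intersection with every IP-set.

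If I instead had to prove Proposition~\ref{Ramsey} itself, that would be the real obstacle. I would use Hindman's theorem: the finite-sums/products coloring argument, or idempotent ultrafilters in $\beta G$ supported on $A$. For the present statement, however, it is assumed.
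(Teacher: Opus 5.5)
Your argument is correct: assuming an IP-set $A$ with $A\cap S_1\cap S_2=\varnothing$, splitting it as $(A\setminus S_1)\cup(A\setminus S_2)$, and applying the Ramsey property (Proposition~\ref{Ramsey}) yields an IP-set missing $S_1$ or missing $S_2$, which is a contradiction. The paper gives no separate proof; it cites Furstenberg and presents the statement as following from Proposition~\ref{Ramsey}, and its parallel argument for $\Delta^*$-sets in Section~\ref{sec:3} uses the same Ramsey splitting, so your route is essentially the paper's.
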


In the rest of this paper we assume that $G$ is a countable discrete group.

\begin{definition} A subset $A$ of $G$ is a {\em translational IP*-set} (or {\em TIP*-set}) if for any $g\in G$, either $Ag$ or $G\setminus Ag$ is an IP*-subset of $G$.
\end{definition}

Since $\{e_G\}$ is an IP-set, every IP*-set contains $e_G$. For a TIP*-set $A$ and $g\in G$, the set $gA$ is an IP*-set if and only if $g^{-1}\in A$. In particular, a TIP*-set $A$ is an IP*-set if and only if $e_G\in A$. Note that for any $g\in G$, $gA$ is an IP*-set if and only if $Ag=g^{-1}(gA)g$ is an IP*-set. It follows that the above definition is equivalent if the right multiplication by $g$ is replaced with the left multiplication by $g$. 

\begin{lemma} TIP*-sets form a Boolean algebra, i.e., they are closed under complementation, union and intersection.
\end{lemma}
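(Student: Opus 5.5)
Closure under complementation is immediate from the definition. If $A$ is a TIP*-set and $g\in G$, then $(G\setminus A)g=G\setminus Ag$, because right multiplication by $g$ is a bijection of $G$. Hence the pair $\{Ag,\,G\setminus Ag\}$ is the same pair for $G\setminus A$ as for $A$, with the two members swapped. So one of them is an IP*-set exactly when the same holds for $G\setminus A$, and $G\setminus A$ is a TIP*-set. By De Morgan's laws, closure under intersection will then give closure under union. It therefore suffices to show that $A\cap B$ is a TIP*-set whenever $A$ and $B$ are.

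Fix TIP*-sets $A,B$ and $g\in G$, and note that $(A\cap B)g=Ag\cap Bg$. I will split into two cases according to whether both $Ag$ and $Bg$ are IP*-sets.

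If both are IP*-sets, then $Ag\cap Bg$ is an IP*-set, since IP*-sets are closed under intersection (Proposition [Fur, Lemma 9.5] quoted above). Otherwise, say $Ag$ is not an IP*-set. Because $A$ is a TIP*-set, $G\setminus Ag$ must then be an IP*-set. We have $G\setminus Ag\subseteq G\setminus(Ag\cap Bg)=G\setminus (A\cap B)g$, and IP*-sets are closed under supersets. Hence $G\setminus(A\cap B)g$ is an IP*-set. The case where $Bg$ is not an IP*-set is symmetric.

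In either case one of $(A\cap B)g$ and $G\setminus (A\cap B)g$ is an IP*-set, so $A\cap B$ is a TIP*-set. The trivial sets $\varnothing$ and $G$ are also TIP*-sets, since $G$ itself is an IP*-set; thus the family is a Boolean algebra. There is no genuine obstacle here. The only nontrivial input is the filter property of IP*-sets, which rests on the Ramsey property for IP-sets (Proposition \ref{Ramsey}), and that is already available.
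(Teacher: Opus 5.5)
Your proof is correct and follows the same route as the paper. Complementation comes directly from the definition via $(G\setminus A)g=G\setminus Ag$, and intersection and union come from the filter property of IP*-sets; you simply write out the case analysis that the paper leaves implicit.
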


\begin{proof} TIP*-sets are obviously closed under complementation by definition. For union and intersection, the closure property follows from the fact that IP*-sets form a filter.
\end{proof}

If $G$ is a countable group, let $(2^G, G)$ denote the Bernoulli right-shift system, i.e., for any $g\in G$ and $x\in 2^G$,
$$ (g\cdot x)(h)=x(hg) $$
for all $h\in G$.

\begin{lemma}\label{lem:sIP*} A point $x\in 2^G$ is distal if and only if $\{g\in G\,:\, x(g)=x(e_G)\}$ is a TIP*-set.
\end{lemma}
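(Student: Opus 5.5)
Let $A=\{g\in G: x(g)=x(e_G)\}$. The plan is to use the proposition of \cite{DLX} (distal $\iff$ IP*-recurrent) and translate IP*-recurrence of $x$ in the product topology into a statement about translates of $A$.

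\textbf{Step 1 (return sets).} For $h\in G$, consider the cylinder $U_h=\{y\in 2^G: y(h)=x(h)\}$. By the definition of the right shift,
\[
\{g\in G: (g\cdot x)(h)=x(h)\}=\{g: x(hg)=x(h)\}.
\]
Set $c=x(e_G)$ and $B=\{k\in G: x(k)=c\}=A$.
\begin{itemize}
\item If $x(h)=c$, this set is $h^{-1}A$.
\item If $x(h)\neq c$, it is $h^{-1}(G\setminus A)=G\setminus h^{-1}A$.
\end{itemize}
Basic neighborhoods of $x$ are finite intersections of the $U_h$. IP*-sets form a filter (Propositions~\ref{Ramsey} and the following one), and IP*-sets are closed under supersets. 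Hence $x$ is IP*-recurrent iff for every $h$ the return set to $U_h$ is IP*.

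\textbf{Step 2 (distal $\Rightarrow$ TIP*).} Suppose $x$ is distal, and fix $g\in G$; put $h=g^{-1}$. By Step 1, either $h^{-1}A=gA$ or $G\setminus gA$ is IP*, depending on whether $x(h)=c$. The paper already notes that the left/right versions agree: $gA$ is IP* iff $Ag=g^{-1}(gA)g$ is IP*, since conjugation preserves IP*-sets. Likewise $G\setminus Ag=g^{-1}(G\setminus gA)g$. So for every $g$, one of $Ag$ or $G\setminus Ag$ is IP*, and $A$ is TIP*.

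\textbf{Step 3 (TIP* $\Rightarrow$ distal).} Suppose $A$ is TIP* and fix $h$. By the left-multiplication form of the definition (justified by the same conjugation remark), either $h^{-1}A$ or $G\setminus h^{-1}A$ is IP*. We need the correct one, i.e.\ the one matching $x(h)$. Every IP*-set contains $e_G$, and $e_G\in h^{-1}A$ iff $h\in A$ iff $x(h)=c$. Since a set and its complement cannot both be IP*, the IP* alternative is exactly the one containing $e_G$, which is the return set computed in Step 1. Hence every return set to each $U_h$ is IP*, and by Step 1 $x$ is IP*-recurrent, so $x$ is distal by the cited proposition.

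\textbf{Expected obstacles.} The argument is straightforward. The only care needed is in two places:
\begin{itemize}
\item tracking left versus right translations under the right shift, handled by the conjugation-invariance of IP*-sets;
\item the point in Step 3 that the TIP* alternative automatically selects the correct side, via $e_G\in$ every IP*-set.
\end{itemize}
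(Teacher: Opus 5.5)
Your proof is correct and follows the paper's argument. The converse direction is identical: the return sets to the cylinders are $h^{-1}A$ or its complement, and the TIP* alternative is forced to be the one containing $e_G$. In the forward direction the paper applies IP*-recurrence to the translate $g_0x$ at coordinate $e_G$, using $G$-invariance of distal points, to get $Ag_0^{-1}$ directly; you instead apply it to $x$ at coordinate $g^{-1}$ and pass from $gA$ to $Ag$ by conjugation, which is only a cosmetic difference.
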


\begin{proof} Suppose $x\in 2^G$ is distal. Let $U_0=\{y\in 2^G\,:\, y(e_G)=x(e_G)\}$ and $U_1=\{y\in 2^G\,:\, y(e_G)\neq x(e_G)\}$. Then $U_0$ and $U_1$ are open, and for any $g_0\in G$, $x(g_0)=x(e_G)$ if and only if $g_0x\in U_0$, and $x(g_0)\neq x(e_G)$ if and only if $g_0x\in U_1$. In particular $x\in U_0$. Let $A=\{g\in G\,:\, gx\in U_0\}=\{g\in G\,:\, x(g)=x(e_G)\}$. Since $g_0x$ is also distal, we have that for $x(g_0)=x(e_G)$, the set $\{g\in G\,:\, gg_0x\in U_0\}=Ag_0^{-1}$ is an IP*-set, and for $x(g_0)\neq x(e_G)$, the set $\{g\in G\,:\, gg_0x\in U_1\}=G\setminus Ag_0^{-1}$ is an IP*-set. This shows that $A$ is a TIP*-set.

Conversely, suppose $A=\{g\in G\,:\, x(g)=x(e_G)\}$ is a TIP*-set. Let $e_G\in F$ be a finite subset of $G$ and let $p=x|_F$. Then $U_p=\{y\in 2^G\,:\, \mbox{$y$ extends $p$}\}$ defines a basic open set in $2^G$ with $x\in U_p$. To show that $x$ is distal it is enough to show that $\{g\in G\,:\, gx\in U_p\}$ is an IP*-set. Since
$$ \{g\in G\,:\, gx\in U_p\}=\bigcap_{g_0\in F}\{g\in G\,:\, (gx)(g_0)=p(g_0)=x(g_0)\}, $$
it suffices to show that for any $g_0\in F$, the set
$$ B=\{g\in G\,:\, x(g_0g)=x(g_0)\} $$
is an IP*-set. Now if $x(g_0)=x(e_G)$ then $B=\{g\in G\,:\, x(g_0g)=x(e_G)\}=g_0^{-1}A$ is an IP*-set since it contains $e_G$; if $x(g_0)\neq x(e_G)$ then $B=G\setminus g_0^{-1}A$ is an IP*-set for the same reason.
\end{proof}

\section{Denseness of Almost Automorphic Points\label{sec:3}}

In this section we study the notion of almost automorphic points, which is closely related to but (strictly) stronger than distality. We will give a complete characterization of those countable groups $G$ for which the set of almost automorphic points in $(2^G, G)$ is dense. 

\subsection{Characterizations of almost automorphic points}

Furstenburg \cite{Fur} also introduced the concepts of $\Delta$-sets, $\Delta^*$-sets and $\Delta^*$-recurrence in connection to the concept of almost automorphic points. Here we extend them to arbitrary countable groups and introduce the notion of T$\Delta^*$-sets.

We continue to assume that $G$ is a countable discrete group.

\begin{definition} Let $A, S\subseteq G$.
\begin{enumerate}
\item $A$ is a {\em $\Delta$-set} if there is a sequence $\{g_n\}$ in $G$ such that $A=\{g_n^{-1}g_m\,:\, n<m\}$.
\item $S$ is a {\em $\Delta^*$-set} if for any $\Delta$-set $A$, $S\cap A\neq\varnothing$.
\item $S$ is a {\em translational $\Delta^*$-set} (or {\em T$\Delta^*$-set}) if for any $g\in G$, either $Sg$ or $G\setminus Sg$ is a $\Delta^*$-set.
\end{enumerate}
\end{definition}

\begin{lemma} Every IP-set contains a $\Delta$-set. Consequently, every $\Delta^*$-set is an IP*-set.
\end{lemma}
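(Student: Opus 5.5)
Given an IP-system $\{x_\alpha\}$ generated by $x_1,x_2,\dots$ in $G$, I will exhibit an explicit sequence $\{g_n\}$ whose associated $\Delta$-set lies inside the IP-set. Put $g_0=e_G$ and define $g_n$ as the products of the initial segments of the generators in reverse order, $g_n=x_n x_{n-1}\cdots x_1$ for $n\ge 1$. Then $g_n^{-1}=x_1^{-1}\cdots x_n^{-1}$, and for $n<m$
$$ g_n^{-1}g_m = x_1^{-1}\cdots x_n^{-1}\,x_m x_{m-1}\cdots x_1 ,$$
which does not collapse, because the ordering is wrong for cancellation. So I will use instead $g_n=(x_1x_2\cdots x_n)^{-1}$, with $g_0=e_G$. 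Then $g_n^{-1}=x_1\cdots x_n$, and for $n<m$
$$ g_n^{-1}g_m = x_1\cdots x_n\,(x_1\cdots x_m)^{-1}, $$
which again fails to cancel. The correct choice is $g_n=x_1x_2\cdots x_n$, because then for $n<m$
$$ g_n^{-1}g_m=(x_1\cdots x_n)^{-1}x_1\cdots x_n x_{n+1}\cdots x_m = x_{n+1}x_{n+2}\cdots x_m = x_{\{n+1,\dots,m\}} .$$
This product is taken with increasing indices, matching the convention $x_{\{i_1,\dots,i_k\}}=x_{i_1}\cdots x_{i_k}$ for $i_1<\dots<i_k$. Hence $\{g_n^{-1}g_m : n<m\}$ is contained in the IP-set $\{x_\alpha\}$, and in fact consists exactly of the elements indexed by the intervals $\alpha$.

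One small point remains. The definition allows $\mathcal{F}$ to index from $\mathbb{N}$, which may start at $0$ or at $1$. The $\Delta$-set just built uses the indices $0\le n<m$ with $g_0=e_G$, so it only involves intervals $\{n+1,\dots,m\}$ of positive integers. If $\mathbb{N}$ starts at $0$, I will simply shift the indexing so that $g_n=x_0x_1\cdots x_{n-1}$. Nothing else needs checking, since a $\Delta$-set is by definition any set of this form for an arbitrary sequence.

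The consequence then follows formally. Let $S$ be a $\Delta^*$-set and $A$ an IP-set. By the first part, $A$ contains a $\Delta$-set $D$. Since $S$ is a $\Delta^*$-set, $S\cap D\neq\varnothing$, and therefore $S\cap A\neq\varnothing$. This shows that $S$ is an IP*-set.

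There is no real obstacle here. The only step requiring care is choosing the order of the products in the definition of $g_n$ so that the telescoping cancellation produces an increasing-index product, which is what the IP-system convention requires.
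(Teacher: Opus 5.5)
Your final choice $g_n=x_1x_2\cdots x_n$, with the telescoping identity $g_n^{-1}g_m=x_{\{n+1,\dots,m\}}$ and then the formal deduction that $\Delta^*$-sets are IP*-sets, is correct and is exactly the paper's argument (the paper writes $h_n=g_1\cdots g_n$). In a final write-up, drop the two abandoned choices of $g_n$ and state only the one that works.
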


\begin{proof} Let $S=\{g_{i_1}\cdots g_{i_k}\,:\, i_1<\cdots<i_k\}$ be an IP-set. Let $h_n=g_1\cdots g_n$. Then for $n<m$, $h_n^{-1}h_m\in S$. So $S$ is a $\Delta$-set.
\end{proof}

Note that every $\Delta^*$-set contains $e_G$.

\begin{lemma}\label{lem:R} $\Delta$-sets have the Ramsey property, i.e., if $A_1\cup A_2$ is a $\Delta$-set, then either $A_1$ or $A_2$ contains a $\Delta$-set.
\end{lemma}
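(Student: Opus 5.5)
The plan is to reduce this to the infinite Ramsey theorem for pairs. Suppose $A_1\cup A_2=A=\{g_n^{-1}g_m\,:\, n<m\}$ for some sequence $\{g_n\}$ in $G$. Colour each pair $\{n,m\}$ with $n<m$ as follows: give it colour $1$ if $g_n^{-1}g_m\in A_1$, and colour $2$ otherwise. In the second case $g_n^{-1}g_m\in A_2$, since $A=A_1\cup A_2$. This is a $2$-colouring of $[\mathbb{N}]^2$. Ramsey's theorem then gives an infinite set $H=\{n_0<n_1<n_2<\cdots\}\subseteq\mathbb{N}$ all of whose pairs receive the same colour, say colour $i$.

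Now define the subsequence $h_k=g_{n_k}$. For $k<l$ we have $n_k<n_l$, so $h_k^{-1}h_l=g_{n_k}^{-1}g_{n_l}$. Because the pair $\{n_k,n_l\}$ has colour $i$, this element lies in $A_i$. Hence the $\Delta$-set $\{h_k^{-1}h_l\,:\, k<l\}$ is contained in $A_i$, which is exactly the claim.

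There is one point to be careful about. The colouring must be a function of the pair of indices, not of the group element $g_n^{-1}g_m$. The same group element may arise from several pairs, and we only need the pair's colour to place the element in $A_i$. Colouring by indices handles this automatically.

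The argument holds with no further restrictions. Repetitions in the sequence $\{g_n\}$ do no harm, since $\Delta$-sets are defined using arbitrary sequences. The sets $A_1$ and $A_2$ may also overlap; in that case we simply give preference to $A_1$ when colouring. The only substantial input is Ramsey's theorem for pairs, so I do not expect any real obstacle.
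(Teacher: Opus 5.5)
Your proposal is correct and matches the paper's proof: the paper also colours index pairs $(n,m)$ by whether $g_n^{-1}g_m$ lies in $A_1$ or $A_2$, applies Ramsey's theorem for pairs to get an infinite homogeneous subsequence $\{k_n\}$, and concludes that $\{g_{k_n}^{-1}g_{k_m}\,:\, n<m\}$ is contained in one of the $A_i$. Your tie-breaking in favour of $A_1$ is a minor tidying of the paper's possibly overlapping sets $P_1, P_2$.
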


\begin{proof} Suppose $\{g_n\}$ is a sequence in $G$ such that $A_1\cup A_2=\{g_n^{-1}g_m\,:\, n<m\}$. Let $P_1=\{(n,m)\,:\, n<m \mbox{ and } g_n^{-1}g_m\in A_1\}$ and $P_2=\{(n,m)\,:\, n<m \mbox{ and } g_n^{-1}g_m\in A_2\}$. By Ramsey's theorem \cite[Theorem A]{Ra}, there is an infinite sebsequence $\{k_n\}$ such that either $\{(k_n,k_m)\,:\, n<m\}\subseteq P_1$ or $\{(k_n,k_m)\,:\, n<m\}\subseteq P_2$. In the first case, $P_1$ contains the set $\{g_{k_n}^{-1}g_{k_m}\,:\, n<m\}$, and in the second case, $P_2$ contains this set.
\end{proof}

\begin{lemma} If $S$ is a $\Delta^*$-set and $A$ is a $\Delta$-set, then $S\cap A$ contains a $\Delta$-set.
\end{lemma}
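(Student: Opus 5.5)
The plan is to apply the Ramsey property of $\Delta$-sets (Lemma~\ref{lem:R}) to the decomposition $A=(S\cap A)\cup(A\setminus S)$. Since $A$ is a $\Delta$-set, Lemma~\ref{lem:R} shows that either $S\cap A$ or $A\setminus S$ contains a $\Delta$-set. Suppose $A\setminus S$ contains a $\Delta$-set $B$. Since $S$ is a $\Delta^*$-set, $S\cap B\neq\varnothing$. But $B\subseteq A\setminus S$ is disjoint from $S$, a contradiction. Hence $S\cap A$ contains a $\Delta$-set, as required.

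The only point to check is that Lemma~\ref{lem:R} applies verbatim. Its hypothesis is that $A_1\cup A_2$ is a $\Delta$-set, and here we take $A_1=S\cap A$ and $A_2=A\setminus S$, whose union is exactly $A$. No disjointness or other condition is needed.

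One also might double-check the phrasing in the proof of Lemma~\ref{lem:R}, where ``$P_1$ contains the set'' should read ``$A_1$ contains the set''. That is just a typo and the conclusion is the one we use. I do not expect any real obstacle; the argument is a two-line consequence of the Ramsey property together with the definition of $\Delta^*$-sets.
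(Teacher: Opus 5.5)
Your proposal is correct and is essentially the paper's own proof: split $A=(A\cap S)\cup(A\setminus S)$, apply the Ramsey property of Lemma~\ref{lem:R}, and rule out $A\setminus S$ because any $\Delta$-set inside it would miss the $\Delta^*$-set $S$. You are also right that ``$P_1$'' and ``$P_2$'' at the end of the proof of Lemma~\ref{lem:R} should read $A_1$ and $A_2$.
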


\begin{proof} Since $A=(A\cap S)\cup (A\setminus S)$, it follows from the above lemma that either $A\cap S$ or $A\setminus S$ contains a $\Delta$-set. Since $(A\setminus S)\cap S=\varnothing$, $A\setminus S$ does not contain a $\Delta$-set. Thus $A\cap S$ must contain a $\Delta$-set.
\end{proof}


\begin{lemma} T$\Delta^*$-sets form a Boolean algebra.
\end{lemma}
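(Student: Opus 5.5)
The plan mirrors the proof for TIP*-sets. Closure under complementation is immediate from the definition. If $S$ is a T$\Delta^*$-set and $g\in G$, then $(G\setminus S)g=G\setminus Sg$. So the defining dichotomy for $G\setminus S$ at $g$ is the same dichotomy as for $S$ at $g$, with the roles of the two alternatives swapped.

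For intersection (union then follows by De Morgan), the key input is that $\Delta^*$-sets form a filter. Closure under supersets is trivial. For closure under intersection, let $S_1,S_2$ be $\Delta^*$-sets and $A$ a $\Delta$-set. By the preceding lemma, $S_1\cap A$ contains a $\Delta$-set $A'$. Since $S_2$ is $\Delta^*$, $S_2\cap A'\neq\varnothing$, hence $S_1\cap S_2\cap A\neq\varnothing$. So $S_1\cap S_2$ is a $\Delta^*$-set.

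Now let $S,T$ be T$\Delta^*$-sets and fix $g\in G$. Using $(S\cap T)g=Sg\cap Tg$, we split into cases:
\begin{enumerate}
\item If both $Sg$ and $Tg$ are $\Delta^*$, then so is $(S\cap T)g$, by the filter property.
\item If $G\setminus Sg$ is $\Delta^*$, then its superset $G\setminus (S\cap T)g$ is $\Delta^*$.
\item If $G\setminus Tg$ is $\Delta^*$, the same argument applies.
\end{enumerate}
These cases are exhaustive, so $S\cap T$ is a T$\Delta^*$-set. The only nontrivial step is the filter property, which is supplied by the Ramsey property in Lemma~\ref{lem:R} and the lemma following it, so no real obstacle is expected.
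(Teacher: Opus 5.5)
Your proposal is correct and follows the same route as the paper. Complementation is immediate from $(G\setminus S)g=G\setminus Sg$, and closure under intersection and union comes from $\Delta^*$-sets being closed under intersection and supersets (derived, as in the paper, from the lemma that $S\cap A$ contains a $\Delta$-set); you just write out the case analysis the paper leaves implicit.
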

\begin{proof} It is clear from the definition that T$\Delta^*$-sets are closed under complementation. By definition and the previous lemma, $\Delta^*$-sets are closed under intersection and taking supersets. It follows that T$\Delta^*$-sets are closed under union and intersection.
\end{proof}

\begin{definition} Let $(X, G)$ be a dynamical system and $x\in X$.
\begin{enumerate}
\item $x$ is {\em $\Delta^*$-recurrent} if for any open set $U$ with $x\in U$, the set $\{g\in G\,:\, gx\in U\}$ is a $\Delta^*$-set.
\item $x$ is an {\em almost automorphic point} if for any sequence $\{g_n\}$ in $G$, if $g_nx\to y$ then $g_n^{-1}y\to x$.
\end{enumerate}
\end{definition}

\begin{lemma}\label{lem:3.7} Let $(X, G)$ be a dynamical system and $x\in X$. Then $x$ is $\Delta^*$-recurrent
if and only if $x$ is an almost automorphic point.
\end{lemma}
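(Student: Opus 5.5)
We prove both directions by contraposition, working directly with sequences and compactness of $X$.

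\emph{Almost automorphic $\Rightarrow$ $\Delta^*$-recurrent.} Suppose $x$ is not $\Delta^*$-recurrent. Then there are an open $U\ni x$ and a sequence $\{g_n\}$ in $G$ with $g_n^{-1}g_m x\notin U$ for all $n<m$. We may shrink $U$ to an open $V\ni x$ with $\overline V\subseteq U$, so that $g_n^{-1}g_m x\notin\overline V$ for all $n<m$.

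We first dispose of the case where some value $g_n=g$ repeats infinitely often. Then we get $n<m$ with $g_n=g_m$, hence $e_G=g_n^{-1}g_m$ and $x\notin U$, a contradiction. So we may assume the $g_n$ are pairwise distinct.

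By compactness, pass to a subsequence with $g_m x\to y$; the $\Delta$-set property survives passing to subsequences. Almost automorphy applied to $\{g_m\}$ gives $g_m^{-1}y\to x$. Fix $n$. Since $g_n^{-1}$ acts continuously, $g_n^{-1}g_m x\to g_n^{-1}y$ as $m\to\infty$. These points lie outside the open set $V$, so the limit does too: $g_n^{-1}y\notin V$ for every $n$. This contradicts $g_n^{-1}y\to x\in V$. The only care needed is the order of limits: first let $m\to\infty$ with $n$ fixed, then let $n\to\infty$.

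\emph{$\Delta^*$-recurrent $\Rightarrow$ almost automorphic.} Suppose $g_nx\to y$ but $g_n^{-1}y\not\to x$. Passing to a subsequence, there is an open $U\ni x$ with $g_n^{-1}y\notin\overline U$ for all $n$. Choose an open $W\ni x$ with $\overline W\subseteq U$. Set $O_n = X\setminus g_n\overline U$, an open neighbourhood of $y$ with $g_n^{-1}O_n\cap\overline U=\varnothing$.

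We now inductively extract a subsequence $h_1,h_2,\dots$ of $\{g_n\}$ such that for all $k<l$,
$$h_k^{-1}h_l x\notin \overline W,$$
i.e.\ $h_l x\in X\setminus h_k\overline W$. This is possible because $h_k^{-1}y\notin\overline U\supseteq\overline W$, so $y$ lies in the open set $\bigcap_{k<l}\bigl(X\setminus h_k\overline W\bigr)$. Since $g_nx\to y$, all sufficiently late $g_n$ satisfy $g_nx$ in this set, and we take $h_l$ to be such a term.

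Then $\{h_k^{-1}h_l : k<l\}$ is a $\Delta$-set disjoint from $\{g : gx\in W\}$, contradicting $\Delta^*$-recurrence. This direction is routine; the main care overall lies in the limit-interchange argument in the first direction.
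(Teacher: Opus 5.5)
Your proof is correct. For almost automorphic $\Rightarrow$ $\Delta^*$-recurrent you follow the paper's argument: pass to a subsequence with $g_nx\to y$, use $g_n^{-1}y\to x$, and let $m\to\infty$ before $n\to\infty$. The only difference is that you phrase it by contradiction, whereas the paper directly chooses $j$ and a neighbourhood $U_y$ of $y$ with $g_{n_j}^{-1}U_y\subseteq U$.

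The converse is where you genuinely differ. The paper passes to a subsequence with $g_n^{-1}y\to z\neq x$. It then invokes Ramsey's theorem, through the proof of Lemma~\ref{lem:R}, to get a further subsequence along which \emph{every} difference $g_{k_n}^{-1}g_{k_m}$ sends $x$ into $U_x$. It needs this because the pair $(n,m)$ it picks afterwards by continuity must be guaranteed to be a return time. You instead work on the complementary side. Since $g_n^{-1}y$ stays uniformly away from $x$ along a subsequence, each already-chosen $h_k$ leaves an open neighbourhood $X\setminus h_k\overline{W}$ of $y$. A greedy choice then produces a $\Delta$-set that misses $\{g: gx\in W\}$ outright, contradicting $\Delta^*$-recurrence directly.

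Your route is fully elementary: it needs no infinite Ramsey theorem and never identifies the limit point $z$. The paper's route instead reuses the Ramsey property of $\Delta$-sets it has just established. Minor point: several of your auxiliary steps are unnecessary. You do not need the repeated-value case, the shrinkings to $\overline V\subseteq U$ and $\overline W\subseteq U$, or the sets $O_n$. Closedness of $X\setminus U$ and of $h_k\overline U$ already suffices.
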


\begin{proof} First suppose $x$ is $\Delta^*$-recurrent. Suppose $g_nx\to y$. By passing down to a subsequence if necessary, suppose $g_n^{-1}y\to z$. Assume $x\neq z$. Let $U_x$ and $U_z$ be disjoint open subsets of $X$ with $x\in U_x$ and $z\in U_z$. Since the set $\{g\in G\,:\, gx\in U_x\}$ is a $\Delta^*$-set, its intersection with the $\Delta$-set $\{g_n^{-1}g_m\,:\, n<m\}$ must contain a $\Delta$-set. In fact, from the proof of Lemma~\ref{lem:R} we see that there is an infinite subsequence $\{k_n\}$ such that the intersection contains the $\Delta$-set $\{g_{k_n}^{-1}g_{k_m}\,:\, n<m\}$. Note that $g_{k_n}^{-1}y\to z$. Thus there exist an $n$ and an open set $U_y$ with $y\in U_y$ such that $g_{k_n}^{-1}U_y\subseteq U_z$. Since $g_k x\to y$, there is an $m>n$ such that $g_{k_m}x\in U_y$. Hence $g_{k_n}^{-1}g_{k_m}x\in U_z$. This contradicts $g_{k_n}^{-1}g_{k_m}x\in U_x$.

Conversely, suppose $x$ is almost automorphic. To show that $x$ is $\Delta^*$-recurrent, we consider an open set $U$ with $x\in U$ and a $\Delta$-set $\{g_n^{-1}g_m\,:\, n<m\}$ in $G$. We need to show that there exist $n<m$ such that $g_n^{-1}g_mx\in U$. Pass to a subsequence $\{g_{n_k}\}$ such that $g_{n_k}x\to y$ for some $y\in X$. Then $g_{n_k}^{-1}y\to x$. Choose $j$ and an open subset $U_y$ with $y\in U_y$ such that $g_{n_j}^{-1}U_y\subseteq U$. Let $k>j$ be such that $g_{n_k}x\in U_y$. Then $g_{n_j}^{-1}g_{n_k}x\in U$.
\end{proof}

It follows that every almost automorphic point is distal. In particular, if the set of all almost automorphic points in $(2^G, G)$ is dense, then so is the set of all distal points.

For $\Delta^*$-recurrence, we also have a lemma similar to Lemma \ref{lem:sIP*}.

\begin{lemma}\label{lem:3.8} $x\in 2^G$ is $\Delta^*$-recurrent if and only if $\{g\in G\,:\, x(g)=x(e_G)\}$ is a T$\Delta^*$-set.
\end{lemma}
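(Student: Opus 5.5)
The plan is to copy the proof of Lemma~\ref{lem:sIP*} almost verbatim. Every IP*-set statement becomes a $\Delta^*$-set statement, and the link between the dynamical notion and the combinatorial one is now the definition of $\Delta^*$-recurrence rather than the IP*-characterization of distality. Set $A=\{g\in G\,:\, x(g)=x(e_G)\}$ and, as before, $U_0=\{y\in 2^G\,:\, y(e_G)=x(e_G)\}$ and $U_1=\{y\in 2^G\,:\, y(e_G)\neq x(e_G)\}$.

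For the forward direction, assume $x$ is $\Delta^*$-recurrent. Then by Lemma~\ref{lem:3.7} $x$ is almost automorphic, and almost automorphy passes to every point $g_0x$ of the orbit. To check this, suppose $g_n g_0 x\to y$. Applying almost automorphy of $x$ to the sequence $g_ng_0$ gives $g_0^{-1}g_n^{-1}y\to x$, and hence $g_n^{-1}y\to g_0x$ by continuity of $g_0$. By Lemma~\ref{lem:3.7} again, $g_0x$ is $\Delta^*$-recurrent. Now fix $g_0\in G$ and split into two cases.
\begin{itemize}
\item If $x(g_0)=x(e_G)$, then $g_0x\in U_0$, so $\{g\,:\, gg_0x\in U_0\}=Ag_0^{-1}$ is a $\Delta^*$-set.
\item If $x(g_0)\neq x(e_G)$, then $g_0x\in U_1$, so $G\setminus Ag_0^{-1}$ is a $\Delta^*$-set.
\end{itemize}
Since $g_0^{-1}$ ranges over all of $G$, this shows that $A$ is a T$\Delta^*$-set.

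For the converse, assume $A$ is a T$\Delta^*$-set. It suffices to check $\Delta^*$-recurrence on the basic neighborhoods $U_p$ with $p=x|_F$, where $F$ is finite and contains $e_G$. Any superset of a $\Delta^*$-set is a $\Delta^*$-set, so this handles arbitrary neighborhoods of $x$. As in the IP* case, $\{g\,:\, gx\in U_p\}$ is the intersection over $g_0\in F$ of the sets $B_{g_0}=\{g\,:\, x(g_0g)=x(g_0)\}$, and $\Delta^*$-sets are closed under finite intersection (noted in the proof that T$\Delta^*$-sets form a Boolean algebra). So it is enough to show each $B_{g_0}$ is a $\Delta^*$-set. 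There are again two cases.
\begin{itemize}
\item If $x(g_0)=x(e_G)$, then $B_{g_0}=g_0^{-1}A$.
\item Otherwise, $B_{g_0}=G\setminus g_0^{-1}A$.
\end{itemize}
In both cases $B_{g_0}$ contains $e_G$.

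The only real point needing care is the analogue of the remark after the definition of TIP*-sets. We need that for a T$\Delta^*$-set $A$, the left translates $g_0^{-1}A$ and their complements are $\Delta^*$-sets exactly when they contain $e_G$. Two facts give this.
\begin{itemize}
\item Every $\Delta^*$-set contains $e_G$. Hence, of the two complementary sets $Sg$ and $G\setminus Sg$, the $\Delta^*$ one is whichever contains $e_G$.
\item $\Delta^*$-sets are invariant under conjugation. For any $h$, conjugation by $h$ carries the $\Delta$-set built from $\{g_n\}$ to the one built from $\{h^{-1}g_nh\}$, so $S$ is $\Delta^*$ iff $h^{-1}Sh$ is.
\end{itemize}
Then $g_0^{-1}A=g_0^{-1}(Ag_0^{-1})g_0$ is a $\Delta^*$-set iff $Ag_0^{-1}$ is. By the first fact, $Ag_0^{-1}$ is a $\Delta^*$-set iff $e_G\in Ag_0^{-1}$, i.e.\ iff $g_0\in A$, i.e.\ iff $x(g_0)=x(e_G)$. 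The complementary case follows the same way. I expect this translation bookkeeping to be the main place to be careful; everything else is routine.
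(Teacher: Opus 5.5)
Your proof is correct and follows the paper's approach: the paper's entire proof is the remark that the argument is similar to that of Lemma~\ref{lem:sIP*}, and you carry out exactly that adaptation. The one new ingredient is that $\Delta^*$-recurrence passes from $x$ to $g_0x$, which you obtain through Lemma~\ref{lem:3.7} (conjugation invariance of $\Delta^*$-sets would give it directly); you handle this correctly, as well as the translate/complement bookkeeping in the converse.
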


\begin{proof} The proof is similar to that of Lemma~\ref{lem:sIP*}.
\end{proof}

\subsection{Denseness of almost automorphic points}
 
We give an answer of Question \ref{AAP} in this subsection. It turns out that there is a simple characterization of all countable discrete groups for which this question has a positive answer. In this section we will prove that for a countably infinite discrete group $G$, the set of all almost automorphic points is dense in $(2^G, G)$ if and only if $G$ is maximally almost periodic. 

\begin{definition}[{Following \cite{vN}}] A topological group is {\em maximally almost periodic} (or {\em MAP}) if the continuous homomorphisms from $G$ into compact Hausdorff groups separate the points of $G$.
\end{definition}

By definition, an MAP group embeds algebraically into a compact Hausdorff group. In our context, since $G$ is countable discrete, by \cite[Section 2.20]{MZ}, $G$ is MAP if and only if it embeds algebraically into a compact Polish group. It is well known that countable abelian groups are MAP (\cite[Theorem 36]{vN}). If $G$ is a countable residually finite group then $G$ is embedded into its profinite completion (see e.g. \cite[Section 3.2]{RZbook}), hence it is MAP. 



\begin{lemma}\label{lem:tbli} Let $G$ be a countable discrete group. If $G$ admits a totally bounded, left-invariant metric $d$ generating a topology $\tau_d$ on $G$, then any $\tau_d$-clopen nbhd of $e_G$ is a T$\Delta^*$-set.
\end{lemma}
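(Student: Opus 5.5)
Let $U$ be a $\tau_d$-clopen neighborhood of $e_G$. Since T$\Delta^*$ is defined with right translates, we must show that for every $g\in G$ either $Ug$ or $G\setminus Ug$ is a $\Delta^*$-set. The plan has three steps.

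\emph{Step 1: a reduction.} By the remark after the definition of TIP*-sets (the same conjugation argument works for $\Delta^*$-sets, since $\Delta$-sets are preserved by conjugation), $Ug$ is $\Delta^*$ if and only if $gU=g(Ug)g^{-1}$ is $\Delta^*$. Hence it suffices to show that for every $h\in G$ either $hU$ or $G\setminus hU$ is a $\Delta^*$-set. Left multiplication by $h$ is a $d$-isometry, hence a $\tau_d$-homeomorphism, so $V=hU$ is $\tau_d$-clopen. Exactly one of $V$ and $G\setminus V$ contains $e_G$. We are therefore reduced to the following claim: \emph{every $\tau_d$-open set $W$ containing $e_G$ is a $\Delta^*$-set.}

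\emph{Step 2: proof of the claim.} Since $W$ is open, pick $\epsilon>0$ with $B_d(e_G,\epsilon)\subseteq W$. Let $\{g_n^{-1}g_m : n<m\}$ be any $\Delta$-set, coming from a sequence $\{g_n\}$. By total boundedness, $G$ is covered by finitely many $d$-balls of radius $\epsilon/2$. By pigeonhole, some two indices $n<m$ have $g_n,g_m$ in the same ball, so $d(g_n,g_m)<\epsilon$. Left invariance then gives
\[
d(e_G,g_n^{-1}g_m)=d(g_n,g_m)<\epsilon ,
\]
so $g_n^{-1}g_m\in W$. Thus $W$ meets every $\Delta$-set, which is exactly what it means to be a $\Delta^*$-set.

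\emph{Step 3: conclusion.} Apply the claim to whichever of $V=hU$ and $G\setminus V$ contains $e_G$; both are $\tau_d$-open. Combined with the reduction in Step 1, this shows that $U$ is a T$\Delta^*$-set.

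There is no real obstacle here. The only points needing care are getting the direction of the translation right in the reduction (where clopenness is used for the complement) and noting that a repeated term $g_n=g_m$ is harmless, since it yields $e_G\in W$.
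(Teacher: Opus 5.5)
Your proof is correct and follows essentially the same route as the paper: Step 2 is the paper's argument, namely a finite $\epsilon/2$-net, pigeonhole, and left invariance. The only difference is that you spell out the reduction the paper compresses into ``it suffices to show that any $\tau_d$-open nbhd of $e_G$ is a $\Delta^*$-set''; you do this via conjugation invariance of $\Delta^*$-sets and the fact that left translates of a $\tau_d$-clopen set are $\tau_d$-clopen.
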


\begin{proof} It suffices to show that any $\tau_d$-open nbhd of $e_G$ is a $\Delta^*$-set. Let $A$ be a $\tau_d$-open set with $e_G\in A$. Let $\epsilon>0$ be such that $\{g\,:\, d(g, e_G)<\epsilon\}\subseteq A$. Let $B$ be a finite $\epsilon/2$-net, i.e., for any $g\in G$ there is $b\in B$ such that $d(g, b)<\epsilon/2$. Let $\{g_n\}$ be any sequence in $G$. There are $0\leq i<j\leq |B|$ such that for some $b\in B$, $d(g_i, b), d(g_j, b)<\epsilon/2$. Then $d(g_i, g_j)<\epsilon$. By the left-invariance of $d$, $d(g_i^{-1}g_j, e_G)<\epsilon$. Thus $g_i^{-1}g_j\in A$.
\end{proof}

Recall the notion of an enveloping semigroup of a dynamical system $(X, G)$. Equipp $X^X$ with the product topology and consider the natural map $\iota\colon G\to X^X$ given by $\iota(g)(x)=gx$. The closure of $\iota(G)$ in $X^X$ is denoted $E(X)$ and called the {\em enveloping semigroup} (or {\em Ellis semigroup}) of $(X, G)$. 

\begin{theorem} Let $G$ be a countable discrete group. Then the following are equivalent:
\begin{enumerate}
\item[(1)] $G$ is MAP;
\item[(2)] $G$ admits a totally bounded, left-invariant metric;
\item[(3)] The set of all almost automorphic points in $(2^G, G)$ is dense. 
\end{enumerate}
\end{theorem}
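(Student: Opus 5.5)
I will prove the cycle (1)$\Rightarrow$(2)$\Rightarrow$(3)$\Rightarrow$(1).

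\emph{(1)$\Rightarrow$(2).} By the remark following the definition of MAP, a countable MAP group $G$ embeds algebraically, via some $\varphi$, into a compact Polish group $K$. A compact metrizable group carries a compatible left-invariant metric $\rho$ (Birkhoff--Kakutani). Setting $d(g,h)=\rho(\varphi(g),\varphi(h))$ gives a left-invariant metric on $G$, since $\varphi$ is injective. It is totally bounded because $K$ is compact.

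\emph{(2)$\Rightarrow$(3).} Fix a totally bounded left-invariant metric $d$ on $G$. Since $G$ is countable, the set $\{d(g,h)\}$ is countable, so for all but countably many $r>0$ the open ball $B_r(a)$ equals the closed ball and is therefore $\tau_d$-clopen. Let $\mathcal{B}$ be the Boolean algebra generated by these clopen balls. Left-invariance gives $gB_r(a)=B_r(ga)$, so $\mathcal{B}$ is closed under left translation. By Lemma~\ref{lem:tbli}, every $\tau_d$-clopen neighbourhood of $e_G$ is a T$\Delta^*$-set. For any $C\in\mathcal{B}$, the set $C$ is clopen, hence either $C$ or $G\setminus C$ is a clopen neighbourhood of $e_G$. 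So every member of $\mathcal{B}$ is a T$\Delta^*$-set; we use the left-multiplication form of the definition, which is equivalent as remarked after the TIP* definition (the same argument works for $\Delta^*$).

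Given a basic open set $U_p$ with $p\colon F\to 2$ and $F$ finite, I use that $\tau_d$ is Hausdorff to choose pairwise disjoint clopen balls around the points of $F$. Let $C$ be the union of those balls around points $f$ with $p(f)=p(e_G)$. Define $x(g)=p(e_G)$ for $g\in C$ and $x(g)=1-p(e_G)$ otherwise. Then $x$ extends $p$, and $\{g: x(g)=x(e_G)\}=C$ is a T$\Delta^*$-set. By Lemma~\ref{lem:3.8} and Lemma~\ref{lem:3.7}, $x$ is almost automorphic, and it lies in $U_p$.

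\emph{(3)$\Rightarrow$(1).} This is the main step. Given $g\ne e_G$, density yields an almost automorphic $x$ with $x(e_G)\ne x(g)$. Let $Y=\overline{Gx}$. Consider the topology on $G$ pulled back from the orbit map $h\mapsto hx$ together with all its translates, that is, the uniformity generated by the pseudometrics $\sup_{k}$ over finitely many coordinates. The cleanest route is through the enveloping semigroup $E(Y)$. For an almost automorphic point, $Y$ is an almost one-to-one extension of its maximal equicontinuous factor $Z=K/H$, where $K$ is a compact group and $x$ is a singleton fibre over the base point. I would instead prove the needed consequence directly. Take the countable product $\prod 2^G$ of such orbit closures, one for each $g\ne e$, with a product point $\bar x$ that is still almost automorphic since the definition is coordinatewise. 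Then map $G$ to the compact group $K$ obtained as the maximal equicontinuous factor of $\overline{G\bar x}$, which is a compact group rotation because $\bar x$ is minimal and distal, by Corollary~\ref{minimal}. The homomorphism $G\to K$ separates points: if $g$ mapped to $e$, then $g$ would fix the base point $z_0$. Because the fibre over $z_0$ is the singleton $\{\bar x\}$ (almost automorphy), $g\bar x=\bar x$ would follow, contradicting $x(g)\ne x(e)$ in the relevant coordinate. The main obstacle is justifying the singleton-fibre fact from the definition of almost automorphic points via $E(Y)$. If $\pi(g\bar x)=\pi(\bar x)$, then there is a net $g_n\bar x\to y$ with $g_n^{-1}y\to\bar x$ and $g_n^{-1}$ also returning $g\bar x$ near $\bar x$ by equicontinuity of the factor. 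I expect this to require Veech's characterization, proved by hand with nets in $E(Y)$.
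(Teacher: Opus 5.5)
Your proposal is correct and follows the paper's proof essentially step for step. For (1)$\Rightarrow$(2) you pull back an invariant metric from a compact metrizable group; for (2)$\Rightarrow$(3) you take a clopen union of small balls whose radius avoids the countable distance set and apply Lemma~\ref{lem:tbli}, Lemma~\ref{lem:3.8} and Lemma~\ref{lem:3.7}; and for (3)$\Rightarrow$(1) you rely on Veech's theorem that an almost automorphic point is the unique preimage of its image in the maximal equicontinuous factor, which the paper also simply cites. The only difference is cosmetic: you build a single product point $\bar x$ to get one injective homomorphism into the compact Ellis group, instead of taking the product of one homomorphism per $g\neq e_G$. That factor is in general a homogeneous space $K/H$ rather than a rotation on $K$, but your injectivity argument only uses that $g\mapsto e$ forces $g$ to fix $z_0$, so nothing breaks.
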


\begin{proof} (1)$\Rightarrow$(2): Every compact Polish group admits a totally bounded, two-sided invariant compatible metric. If a countable group $G$ is MAP, then naturally it admits a totally bounded, left-invariant metric.

(2)$\Rightarrow$(3): Let $d$ be a totally bounded, left-invariant metric which generates a topology $\tau_d$ on $G$. Let $g_1,\dots, g_n, h_1,\dots, h_m\in G$ be distinct nonidentity elements. By Lemmas~\ref{lem:3.7} and \ref{lem:3.8}, it suffices to define a T$\Delta^*$-set $S$ with $g_1,\dots, g_n\in S$ and $h_1,\dots, h_m\not\in S$.

Let $Q=\{d(g,h)\,:\, g,h\in G\}$. Since $G$ is countable, $Q$ is countable. Let
$$r=\min\{d(e_G, h_k), d(g_i, h_k)\,:\, 1\leq i\leq n, 1\leq k\leq m\}.$$
Let $\epsilon>0$ be such that $\epsilon< r$ and $\epsilon\not\in Q$. Then for any $g\in G$, $\{h\in G\,:\, d(g,h)<\epsilon\}=\{h\in G\,:\, d(g, h)\leq \epsilon\}$ is a $\tau_d$-clopen set. Define
$$ S=\{h\in G\,:\, d(h, e_G)<\epsilon\}\cup\bigcup_{1\leq i\leq n} \{h\in G\,:\, d(h, g_i)<\epsilon\}. $$
Then $S$ is $\tau_d$-clopen, and $e_G, g_1, \dots, g_n\in S$ but $h_1,\dots, h_m\not\in S$. By Lemma~\ref{lem:tbli}, $S$ is a T$\Delta^*$-set.

(3)$\Rightarrow$(1): For every $g\in G$ such that $g\ne e_G$, by the denseness of the set of all almost automorphic points in $(2^G, G)$, there is an almost automorphic point $x\in 2^G$ such that $x(e_G)=0$ and $x(g)=1$. By Corollary \ref{minimal}, $(\overline{G x},G)$ is a minimal system and $x$ is an almost automorphic point in this system. Then by  \cite[Section 3.4]{Veech1}, there is an equicontinuous system $(Y,G)$ and  a factor map $f:\overline{G x}\to Y$ such that for every $y\in X$, $f^{-1}(f(y))=\{y\}$ if and only if $y$ is almost automorphic. So we have that $f(x)\ne f(gx)=gf(x)$. Let $\varphi:G\to{\rm Homeo}(Y)$ denote the action of $G$ on $Y$, then $\varphi(g)\ne {\rm id}(Y)$. The enveloping semigroup $E(Y)$ of $(Y,G)$ is a compact group (see \cite[Chapter 3, Theorem 3]{AusBook}) and $\varphi(G)$ is dense in $E(Y)$. In particular, $\varphi$ is a group homomorphism from $G$ to some compact Hausdorff topological group such that $\varphi(g)$ is not the identity element.

For every $g\in G$ such that $g\ne e_G$, let $H_g$ be a compact Hausdorff topological group and $\varphi_g:G\to H_g$ be a group homomorphism such that $\varphi_g(g)\ne e_{H_g}$. Then $\prod_{g\ne e_G}\varphi_g$ is an embedding of $G$ to some compact Hausdorff topological group, so $G$ is MAP.
\end{proof}

\section{Denseness of Distal Points\label{sec:4}}

In this section we will give some characterizations of countable discrete groups $G$ for which the set of distal points in $(2^G, G)$ is dense. We will also give two examples of countable 2-step nilpotent groups $G_1$ and $G_2$, neither of which is MAP, such that the set of distal points in $(2^{G_1},G_1)$ is dense while the set of distal points in $(2^{G_2},G_2)$ is not dense. Recall that countable abelian groups are all MAP.  Thus our results suggest that there is unlikely any characterization of groups for which the distal points are dense in terms of their algebraic properties. 

Throughout this section we let $G$ be a countable discrete group with identity element $e_G$. We consider continuous actions of $G$ on compact metrizable spaces. Let $G$ act on a compact metrizable space $X$ continuously. The action is said to be {\em effective} if for each $g\in G\setminus \{e_G\}$ there  is some $x\in X$ with $gx\neq x$, i.e., the corresponding group homomorphism from $G$ to the homeomorphism group of $X$ is injective. 
We say a point $x\in X$ has {\em trivial stabilizer} if $gx\neq x$ for all $g\in G\setminus \{e_G\}$.
We say the action is {\em point-distal} if there is a distal point in $X$ with dense orbit. 

The following is a well-known construction of zero-dimensional extensions. For a finite set $A$, $z\in A^G$ and $g\in G$, denote $z(g)$ by $z_g$ for notational simplicity. 

\begin{lemma} \label{L-zero-dim extension}
Let $G$ act on a compact metrizable space $X$ continuously. Let $\rho$ be a compatible metric on $X$. Let $\{\cU_n\}_{n\in \Nb}$ be a sequence of finite families of nonempty open subsets of $X$ satisfying the following conditions:
\begin{enumerate}
\item[(a)] for each $n\in \Nb$, the members of $\cU_n$ are pairwise disjoint and the union $\bigcup \cU_n$ is dense in $X$;
\item[(b)] for each $n\in \Nb$, every member of $\cU_{n+1}$ is contained in some member of $\cU_n$;
\item[(c)] ${\rm mesh}(\cU_n):=\max_{U\in \cU_n}\diam(U)$ converges to $0$ as $n\to \infty$.
\end{enumerate}
For each $n\in \Nb$, denote by $Z_n$ the set of $z\in (\cU_n)^G$ satisfying that $\bigcap_{g\in G} g^{-1}\overline{z_g}\neq \varnothing$. Then $Z_n$ is a closed $G$-invariant subset of $(\cU_n)^G$.  The map $\pi_n: Z_{n+1}\rightarrow Z_n$ defined by $\pi_n(z)_g\supseteq z_g$ for all $z\in Z_{n+1}$ and $g\in G$ is a factor map. The map $\pi: Z=\varprojlim Z_n\rightarrow X$ sending $(z_n)_{n\in \Nb}$ to the point in $\bigcap_{n\in \Nb} \overline{z_{n, e_G}}$ is a factor map. 
\end{lemma}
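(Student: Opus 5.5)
We check the four assertions of the lemma in turn: $Z_n$ is closed and $G$-invariant, $\pi_n$ is well defined and a factor map, and $\pi$ is well defined, continuous, equivariant and surjective. The conventions are as in the Bernoulli shift, $(h z)_g = z_{gh}$, and $X$ carries the left action.

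\emph{$Z_n$ is closed and $G$-invariant.} Since $X$ is compact, $\bigcap_{g\in G} g^{-1}\overline{z_g}\neq\varnothing$ holds if and only if $\bigcap_{g\in F} g^{-1}\overline{z_g}\neq\varnothing$ for every finite $F\subseteq G$. Each finite condition depends only on $z|_F$, so it defines a clopen set, and $Z_n$ is an intersection of clopen sets, hence closed. For invariance, let $x\in\bigcap_g g^{-1}\overline{z_g}$ and $h\in G$. Then for every $g$,
$$ gh^{-1}x=(gh^{-1})h^{-1}\cdot ... $$
more simply: $h^{-1}x$ lies in $g^{-1}\overline{z_{gh}}$, because $g h^{-1}\cdot x$... we check directly that $gh^{-1}x\in\overline{z_{g}}$ requires the index $g$ to be replaced by $gh$. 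Writing it cleanly, if $x\in\bigcap_g g^{-1}\overline{z_g}$ then $hx\in\bigcap_g g^{-1}\overline{z_{gh}}$. This is just the substitution $g\mapsto gh$, and it shows $hz\in Z_n$.

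\emph{$\pi_n$ is well defined and a factor map.} Take $z\in Z_{n+1}$ and a witness $x$, so $gx\in\overline{z_g}$ for all $g$. By (b), $z_g\subseteq V$ for some $V\in\cU_n$. This $V$ is unique because the members of $\cU_n$ are pairwise disjoint and $z_g$ is nonempty. Hence $\pi_n(z)_g=V$ is determined, and $\overline{z_g}\subseteq\overline{V}$, so the same $x$ witnesses $\pi_n(z)\in Z_n$. Equivariance is immediate from the definition. Continuity holds because each coordinate $\pi_n(z)_g$ depends only on $z_g$.

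\emph{$\pi_n$ is surjective.} Given $w\in Z_n$ with witness $x$, we want $z\in Z_{n+1}$ with $z_g\subseteq w_g$ and $gx\in\overline{z_g}$. The obstacle is that $gx$ might not lie in the closure of any member of $\cU_{n+1}$ contained in $w_g$. We expect density (a) and disjointness to handle this, since $\overline{w_g}\setminus w_g$ could in principle be boundary not covered by these closures. The plan is to perturb the witness instead: first handle finitely many coordinates with finite intersection, then use compactness. Concretely, the finite sets $\{z: z_g\subseteq w_g,\ \bigcap_{g\in F}g^{-1}\overline{z_g}\neq\varnothing\}$ are nonempty closed and nested, so their intersection is nonempty.

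\emph{$\pi$ is well defined.} For $(z_n)\in Z$, the sets $\overline{z_{n,e_G}}$ are nested nonempty compact sets with diameters tending to $0$ by (c). Their intersection is therefore a single point. Continuity of $\pi$ follows from (c): agreeing at level $n$ forces the images to be within $\mathrm{mesh}(\cU_n)$ of each other.

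\emph{$\pi$ is equivariant.} We have $\pi((hz_n)_n)\in\bigcap_n\overline{z_{n,h}}$. Any witness $x_n$ for $z_n$ has $hx_n\in\overline{z_{n,h}}$ and $x_n\in\overline{z_{n,e_G}}$. Since $x_n\to\pi(z)$, continuity of $h$ gives that $h\pi(z)$ lies in each $\overline{z_{n,h}}$, hence equals the unique point of that intersection.

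\emph{$\pi$ is surjective.} This is the step where care is needed, because not every $x\in X$ lies in $\bigcup\overline{\cU_n}$ coherently. We plan to show that $\pi(Z)$ contains the dense set of points $x$ with $gx\in\bigcup\cU_n$ for all $g$ and $n$. Such points exist and are dense by a Baire category argument: each $g^{-1}\bigcup\cU_n$ is open and dense, and $G$ is countable. For such $x$, the choice $z_{n,g}=$ the member of $\cU_n$ containing $gx$ is a coherent element of $Z$ with $\pi=x$. Since $\pi(Z)$ is compact, surjectivity follows.
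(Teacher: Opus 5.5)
\textbf{Gap: surjectivity of $\pi_n$.} Most of your argument is sound: closedness and invariance of $Z_n$ (the invariance paragraph is garbled, but the final claim $hx\in\bigcap_g g^{-1}\overline{z_{gh}}$ is correct), $\pi_n$ being a well-defined continuous equivariant map into $Z_n$, and well-definedness and continuity of $\pi$. Your proof of equivariance of $\pi$ via witnesses $x_n\to\pi(z)$ is a valid alternative to the paper's formula $\{\pi(z)\}=\bigcap_n\bigcap_g g^{-1}\overline{z_{n,g}}$. So is your proof of surjectivity of $\pi$ via Baire-generic points plus compactness of $\pi(Z)$.

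The gap is the surjectivity of $\pi_n$, which the lemma needs for $\pi_n$ to be a factor map. You assert that the finite-stage sets $\{z: z_g\subseteq w_g\ \forall g,\ \bigcap_{g\in F}g^{-1}\overline{z_g}\neq\varnothing\}$ are nonempty. That is the whole content of the step, and you give no reason for it. Compactness only helps once each finite stage is known to be nonempty.

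``Perturbing the witness'' into the open sets $w_g$ cannot work. Take the trivial action of $\mathbb{Z}$ on $[0,1]$ with dyadic open intervals as $\cU_n$, so $\cU_1=\{(0,\frac12),(\frac12,1)\}$. Let $w_g$ alternate between these two intervals. Then $w\in Z_1$ with witness $\frac12$, but $w_0\cap w_1=\varnothing$, so no point can be moved into $w_g$ for both $g\in\{0,1\}$.

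\textbf{The missing idea.} For every $U\in\cU_n$,
\[
\overline{U}=\bigcup\{\overline{V}: V\in\cU_{n+1},\ V\subseteq U\}.
\]
This follows from (a) and (b):
\begin{enumerate}
\item Since $\cU_{n+1}$ is finite with dense union, $X=\bigcup_{V\in\cU_{n+1}}\overline{V}$.
\item If $V\subseteq U'\in\cU_n$ with $U'\neq U$, then the open set $U$ misses $U'$, hence misses $\overline{V}$.
\item So $U$ is covered by the closed set $\bigcup_{V\subseteq U}\overline{V}$, which therefore contains $\overline{U}$.
\end{enumerate}

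With this identity, the ``obstacle'' you describe does not exist. Given $w\in Z_n$ with witness $x$, each $gx\in\overline{w_g}$ lies in $\overline{z_g}$ for some $z_g\in\cU_{n+1}$ with $z_g\subseteq w_g$. Then $z=(z_g)_g$ lies in $Z_{n+1}$ with the same witness $x$, and $\pi_n(z)=w$; no compactness is needed. In the example above, $z_g$ alternating between $(\frac14,\frac12)$ and $(\frac12,\frac34)$ works, with witness $\frac12$.

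The same identity shows that every $x\in X$, not only a generic one, admits a coherent sequence $z_{n,g}$ with $gx\in\overline{z_{n,g}}$. This is how the paper proves surjectivity of $\pi$ directly. Your remark that this fails is mistaken, though your Baire workaround remains valid.
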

\begin{proof} Let $n\in \Nb$. Note that $z\in (\cU_n)^G$  is in $Z_n$ if and only if for every nonempty finite subset $F$ of $G$ one has  $\bigcap_{g\in F} g^{-1}\overline{z_g}\neq \varnothing$. It follows that $Z_n$ is closed in $(\cU_n)^G$. For any $z\in Z_n$, one has
$$ \bigcap_{g\in G} g^{-1}\overline{(hz)_g}= \bigcap_{g\in G} g^{-1}\overline{z_{gh}}=h\bigcap_{g\in G} (gh)^{-1}\overline{z_{gh}}\neq \varnothing$$
for each $h\in G$, whence $hz\in Z_n$. Thus $Z_n$ is $G$-invariant. 

Let $z\in Z_{n+1}$. Define $\pi_n(z)\in (\cU_n)^G$ by $\pi_n(z)_g\supseteq z_g$ for all $g\in G$. The element $\pi_n(z)$ is well defined because of conditions (a) and (b). Then 
$$ \bigcap_{g\in G} g^{-1}\overline{\pi_n(z)_g}\supseteq \bigcap_{g\in G}g^{-1}\overline{z_g}\neq \varnothing,$$
whence $\pi_n(z)\in Z_n$. Clearly $\pi_n$ is continuous and $G$-equivariant. 

From condition (a) we see that $X$ is the union of $\overline{V}$ for $V\in \cU_{n+1}$.  Furthermore, for any $U\in \cU_n$, $U$ is disjoint with $\overline{V}$ if $V\in \cU_{n+1}$ and $V\subseteq U'$ for some $U'\in \cU_n\setminus \{U\}$. Thus, combined with condition (b), we see that every $U\in \cU_n$ is covered by $\overline{V}$ for $V\in \cU_{n+1}$ satisfying $V\subseteq U$. It follows that, for every $U\in \cU_n$, its closure $\overline{U}$ is the union of $\overline{V}$ for $V\in \cU_{n+1}$ satisfying $V\subseteq U$. 

Let $y\in Z_n$. Take an $x\in \bigcap_{g\in G} g^{-1}\overline{y_g}$. Then $gx\in \overline{y_g}$ for every $g\in G$. For each $g\in G$, from the above paragraph we can find some $z_g\in \cU_{n+1}$ with $z_g\subseteq y_g$ such that $gx\in \overline{z_g}$. Then $z=(z_g)_{g\in G}$ is in $Z_{n+1}$ and $\pi_n(z)=y$. This shows that $\pi_n$ is surjective. Therefore $\pi_n$ is a factor map. 

Set $Z=\varprojlim_{n\to \infty} Z_n$. Let $z=(z_n)_{n\in \Nb}\in Z$. Then $z_{n+1, e_G}\subseteq z_{n, e_G}$ for all $n\in \Nb$, thus $\bigcap_{n\in \Nb}\overline{z_{n, e_G}}$ is nonempty. From condition (c) we see that $\bigcap_{n\in \Nb}\overline{z_{n, e_G}}$ is a singleton. 
Define $\pi(z)$ to be the point in $\bigcap_{n\in \Nb}\overline{z_{n, e_G}}$. From condition (c) we see that $\pi: Z\rightarrow X$ is continuous. 
Note that we also have $\bigcap_{g\in G} g^{-1}\overline{z_{n+1, g}}\subseteq \bigcap_{g\in G} g^{-1}\overline{z_{n, g}}$ for all $n\in \Nb$, whence $\bigcap_{n\in \Nb}\bigcap_{g\in G} g^{-1}\overline{z_{n, g}}\neq \varnothing$. Since
$$\{\pi(z)\}=\bigcap_{n\in \Nb}\overline{z_{n, e_G}}\supseteq \bigcap_{n\in \Nb}\bigcap_{g\in G} g^{-1}\overline{z_{n, g}}\neq \varnothing,$$
it follows that
\begin{align} \label{E-zero-dim extension}
\{\pi(z)\}= \bigcap_{n\in \Nb}\bigcap_{g\in G} g^{-1}\overline{z_{n, g}}.
\end{align}
For any $h\in G$, we have
\begin{align*}
\{h\pi(z)\}&= h\bigcap_{n\in \Nb}\bigcap_{g\in G} g^{-1}\overline{z_{n, g}}=\bigcap_{n\in \Nb}\bigcap_{g\in G} (gh^{-1})^{-1}\overline{z_{n, g}}
=\bigcap_{n\in \Nb}\bigcap_{g\in G} g^{-1}\overline{z_{n, gh}}\\
&=\bigcap_{n\in \Nb}\bigcap_{g\in G} g^{-1}\overline{(hz)_{n, g}}=\{\pi(hz)\},
 \end{align*}
 whence $h\pi(z)=\pi(hz)$. Thus $\pi$ is equivariant. 
 
 Let $x\in X$. For each $g\in G$, by condition (a) we can find some $z_{1, g}\in \cU_1$ such that $gx\in \overline{z_{1, g}}$. Then $z_1:=(z_{1, g})_{g\in G}$ is in $Z_1$. For each $g\in G$, from the third paragraph of the proof we can also find some $z_{2, g}\in \cU_2$ such that $z_{2, g}\subseteq z_{1, g}$ and $gx\in \overline{z_{2, g}}$. Then $z_2:=(z_{2, g})_{g\in G}$ is in $Z_2$ and $\pi_1(z_2)=z_1$. In this way, we can find $z_{n, g}\in \cU_n$ for each $n\in \Nb$ and $g\in G$ such that $gx\in \overline{z_{n, g}}$ for all $n\in \Nb$ and $g\in G$, $z_n:=(z_{n, g})_{g\in G}$ is in $Z_n$ for all $n\in \Nb$, and $\pi_n(z_{n+1})=z_n$ for all $n\in \Nb$. Then $z=(z_n)_{n\in \Nb}$ is in $Z$, and $\pi(z)=x$. This shows that $\pi$ is surjective. Thus $\pi$ is a factor map. 
\end{proof}

\begin{lemma} \label{L-unique preimage}
Let $G$ act on a compact metrizable space $X$ continuously. Let $\{\cU_n\}_{n\in \Nb}$ be a sequence of finite families of nonempty open subsets of $X$ satisfying the conditions in Lemma~\ref{L-zero-dim extension}. Let $Z_n$, $Z$ and $\pi$ be given in Lemma~\ref{L-zero-dim extension}. Let $x\in X$ such that the orbit of $x$ is contained in $\bigcup \cU_n$ for every $n\in \Nb$. Then $|\pi^{-1}(x)|=1$.
\end{lemma}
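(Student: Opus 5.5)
Surjectivity of $\pi$ is already part of Lemma~\ref{L-zero-dim extension}, so $\pi^{-1}(x)\neq\varnothing$, and only uniqueness needs an argument. I will take two points $z=(z_n)_{n\in\Nb}$ and $z'=(z'_n)_{n\in\Nb}$ in $Z$ with $\pi(z)=\pi(z')=x$, and show that $z_{n,g}=z'_{n,g}$ for every $n\in\Nb$ and $g\in G$.

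The first step uses the identity \eqref{E-zero-dim extension} from the proof of Lemma~\ref{L-zero-dim extension}:
$$\{x\}=\bigcap_{m\in\Nb}\bigcap_{h\in G}h^{-1}\overline{z_{m,h}}.$$
Taking $m=n$ and $h=g$ gives $x\in g^{-1}\overline{z_{n,g}}$, that is, $gx\in\overline{z_{n,g}}$. The same reasoning applied to $z'$ gives $gx\in\overline{z'_{n,g}}$. Since $z_{n,g}$ and $z'_{n,g}$ are both members of $\cU_n$, it suffices to show that $gx$ lies in the closure of exactly one member of $\cU_n$.

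The second step is where the hypothesis enters. By assumption $gx\in\bigcup\cU_n$, so $gx\in U$ for some $U\in\cU_n$. Now let $V\in\cU_n$ with $V\neq U$. By condition (a), $U$ and $V$ are disjoint, and $U$ is open. Hence $U$ is an open neighbourhood of $gx$ that misses $V$, so $gx\notin\overline{V}$. Thus $U$ is the only member of $\cU_n$ whose closure contains $gx$, and therefore $z_{n,g}=U=z'_{n,g}$.

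Since $n$ and $g$ were arbitrary, $z=z'$, which gives $|\pi^{-1}(x)|=1$. I do not expect a real obstacle here. The only point needing care is recalling that \eqref{E-zero-dim extension} pins down $gx\in\overline{z_{n,g}}$ for every $g$, not just for $g=e_G$. Equivariance of $\pi$ also gives this directly: $gx=\pi(gz)$ lies in $\overline{(gz)_{n,e_G}}=\overline{z_{n,g}}$.
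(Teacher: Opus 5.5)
Your proposal is correct and follows essentially the same route as the paper: use \eqref{E-zero-dim extension} to get $gx\in\overline{z_{n,g}}$, then combine $gx\in\bigcup\cU_n$ with the pairwise disjointness of the open members of $\cU_n$ to force $z_{n,g}$ to be the unique member of $\cU_n$ containing $gx$. Your closure argument, that an open $U\ni gx$ disjoint from $V$ gives $gx\notin\overline{V}$, just spells out what the paper compresses into ``applying condition (a)''.
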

\begin{proof} Let $z=(z_n)_{n\in \Nb}$ with $\pi(z)=x$. From \eqref{E-zero-dim extension} we have $gx\in \overline{z_{n, g}}$ for all $n\in \Nb$ and $g\in G$. Since the orbit of $x$ is contained in $\bigcup \cU_n$ for each $n\in \Nb$, from condition (a) of Lemma~\ref{L-zero-dim extension} we have $gx\in z_{n, g}$ for all $n\in \Nb$ and $g\in G$. Applying condition (a) of Lemma~\ref{L-zero-dim extension} again,  we see that $z_{n, g}$ is unique for every $n\in \Nb$ and $g\in G$. Thus $z$ is unique.  
\end{proof}

\begin{lemma} \label{L-avoid orbit}
Let $G$ act on a compact metrizable space $X$ continuously. Let $x\in X$ with dense orbit. Let $\rho$ be a compatible metric on $X$. Then there is a sequence $\{\cU_n\}_{n\in \Nb}$ of finite families of nonempty open subsets of $X$ such that the following hold:
\begin{enumerate}
\item[(a)] for each $n\in \Nb$, the members of $\cU_n$ are pairwise disjoint and the union $\bigcup \cU_n$ is dense in $X$;
\item[(b)] for each $n\in \Nb$, every member of $\cU_{n+1}$ is contained in some member of $\cU_n$;
\item[(c)] ${\rm mesh}(\cU_n):=\max_{U\in \cU_n}\diam(U)$ converges to $0$ as $n\to \infty$;
\item[(d)] for each $n\in \Nb$, the orbit of $x$ is contained in $\bigcup \cU_n$.
\end{enumerate}
\end{lemma}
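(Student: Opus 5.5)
We will build the families $\cU_n$ from open balls whose boundary spheres avoid the countable orbit $Gx$, using that for each center only countably many radii are "bad". Fix a countable dense subset $\{y_k\}_{k\in\Nb}$ of $X$; since $X$ is separable and $Gx$ is dense we may simply take $\{y_k\}=Gx$. For a center $y$ and radius $r>0$ the sphere $S(y,r)=\{z:\rho(z,y)=r\}$ meets $Gx$ only if $r\in R_y:=\{\rho(y,gx): g\in G\}$, which is countable. So for each $y$ we can choose radii $r\notin R_y$ in any interval. For such a ball $B=B(y,r)$, every point of $Gx$ lies either in $B$ or in the open set $X\setminus \overline{B}^{\le}$, where $\overline{B}^{\le}=\{z:\rho(z,y)\le r\}$.

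The construction at stage $n$ goes as follows. Take $\epsilon_n=2^{-n}$, and by compactness pick finitely many points $y_1,\dots,y_{k_n}$ and radii $r_i\in(\epsilon_n/4,\epsilon_n/2)\setminus R_{y_i}$ so that the balls $B_i=B(y_i,r_i)$ cover $X$. Define the "atoms" as all nonempty sets of the form $\bigcap_i C_i$, where each $C_i$ is either $B_i$ or $E_i:=\{z:\rho(z,y_i)>r_i\}$. These are open and pairwise disjoint, with diameter $<\epsilon_n$ (each atom lies in some $B_i$, because the $B_i$ cover and an atom with all $C_i=E_i$ is empty). Their union contains every point lying on no sphere $S(y_i,r_i)$, hence contains $Gx$, giving (d). Since $Gx$ is dense, the union is dense, giving (a), and (c) holds by the choice of $\epsilon_n$.

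To get the refinement property (b), we let $\cU_{n+1}$ be the nonempty pairwise intersections $U\cap V$ with $U\in\cU_n$ and $V$ an atom of stage $n+1$, equivalently using all balls from stages $1,\dots,n+1$ simultaneously to form atoms. These sets are still open and pairwise disjoint, and they have diameter $\le$ the stage-$(n+1)$ mesh. Their union is the intersection of two open sets each containing $Gx$, so it contains $Gx$ and is therefore dense. This gives (a)–(d).

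The main point needing care is ensuring $Gx$ avoids all the boundaries; this is exactly why we choose radii outside the countable sets $R_{y_i}$. Everything else is routine compactness. A degenerate case is finite $X$ (or isolated points), where the balls might be singletons; the argument still works unchanged, since balls are open and nonempty atoms remain nonempty.
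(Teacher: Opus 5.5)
Your proof is correct, but it takes a different route from the paper's.

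\textbf{The paper's construction.} It works by exhaustion. Given $\cU_n$, it fixes a finite $r_n$-net $A_U$ in each $U\in\cU_n$ and enumerates the orbit as $x_1,x_2,\dots$. It then grows open sets $V_{a,j}\subseteq U\cap B(a,r_n)$ one orbit point at a time, with a three-case analysis: $x_{j+1}$ is already covered, lies outside every $\overline{V_{b,j}}$, or lies on $\partial V_{a,j}$ for exactly one $a$. Throughout, the closures $\overline{V_{a,j}}$ stay pairwise disjoint at each finite stage, and finally $V_a=\bigcup_j V_{a,j}$.

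\textbf{Your construction.} You replace this whole inner induction with a single genericity choice. Each $R_{y}$ is countable, so the radii can be chosen so that no sphere $S(y_i,r_i)$ meets $Gx$. The nonempty Boolean atoms of the balls $B_i$ and exteriors $E_i$ then automatically form a finite, pairwise disjoint family of open sets. It has mesh $<\epsilon_n$, and its union contains $Gx$. You get refinement by intersecting with $\cU_n$ rather than by building inside each $U$. To be precise about the order of choices: first take a finite subcover by balls of radius $\epsilon_n/4$, then enlarge each radius to a value in $(\epsilon_n/4,\epsilon_n/2)\setminus R_{y_i}$. Your wording allows this, and the covering is preserved.

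\textbf{Comparison.} Your route is shorter and avoids the case analysis. It is the familiar trick of choosing balls whose boundaries miss a prescribed countable set, as when one chooses partitions with null boundaries for a measure. The paper's exhaustion instead handles boundaries directly, by absorbing any orbit point that lands on $\partial V_{a,j}$ into $V_{a,j}$, rather than avoiding them through the choice of radii. Both constructions yield exactly properties (a)--(d), which is all that Lemma~\ref{L-unique preimage} and the subsequent zero-dimensional extension argument use.
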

\begin{proof} The lemma is trivial when $X$ is finite. Thus we may assume that $X$ is infinite. We shall construct $\{\cU_n\}_{n\in \Nb}$ inductively on $n$ satisfying conditions (a), (b), (d) and the following condition which is stronger than condition (c):
\begin{enumerate}
\item[(c')] for each $n\in \Nb$,  ${\rm mesh}(\cU_n)\le \diam(X)/2^{n-1}$. 
\end{enumerate}

To start, we take $\cU_1=\{X\}$. Assume that we have constructed $\cU_1, \dots, \cU_n$ for some $n\in \Nb$. Since $X$ is compact, for each $U\in \cU_n$ we can find a finite subset $A_U$ of $U$ such that for every $x\in U$, there is some $a\in A_U$ with $\rho(x, a)<r_n:=\diam(X)/2^{n+1}$. Set $A:=\bigcup_{U\in \cU_n}A_U$. Since $X$ is infinite and the orbit of $x$ is dense in $X$, the orbit of $x$ is infinite. Enumerate the points in the orbit of $x$ as $x_1, x_2, \dots$. 

For any $y\in X$ and $r>0$, we write $B(y, r)$ for the open ball $\{z\in X: \rho(z, y)<r\}$. 
We shall construct open sets $V_{a, j}$ for  $a\in A$ and $j\in \Nb$ such that the following hold:
\begin{enumerate}
\item[(i)] for each $U\in \cU_n$ and $a\in A_U$, one has $V_{a, j}\subseteq V_{a, j+1}$ and $\overline{V_{a, j}}\subseteq U\cap B(a, r_n)$ for all $j\in \Nb$;
\item[(ii)] for each $j\in \Nb$, the sets $\overline{V_{a, j}}$ for $a\in A$ are pairwise disjoint, and $x_j\in \bigcup_{a\in A} V_{a, j}$. 
\end{enumerate}
Assume that we have constructed such $V_{a, j}$. Set $V_a=\bigcup_{j\in \Nb}V_{a, j}$ for $a\in A$, and $\cU_{n+1}=\{V_a: a\in A, V_a\neq \varnothing\}$. Then the members of $\cU_{n+1}$ are pairwise disjoint. Since the orbit of $x$ is dense in $X$, and the orbit of $x$ is contained in $\bigcup \cU_{n+1}$, we see that $\bigcup \cU_{n+1}$ is dense in $X$. For each $U\in \cU_n$ and $a\in A_U$, one has $V_a\subseteq U\cap B(a, r_n)$. Note that $\mesh(\cU_{n+1})\le 2\cdot r_n=\diam(X)/2^n$. Thus $\cU_1, \dots, \cU_{n+1}$ satisfy conditions (a), (b), (c') and (d).

Now it suffices to construct open sets $V_{a, j}$ satisfying (i) and (ii). We construct them inductively on $j$. 

Let $j=1$. By condition (d), the point $x_1$ is in some $U\in \cU_n$. Then we can find some $a\in A_U$ such that $x_1\in B(a, r_n)$. Take a small $r>0$ with $\overline{B(x_1, r)}\subseteq U\cap B(a, r_n)$. Setting $V_{a, 1}=B(x_1, r)$ and $V_{b, 1}=\varnothing$ for all $b\in A\setminus \{a\}$ satisfies the requirement. 

Assume that we have constructed $V_{a, j}$ for some $j\in \Nb$ and all $a\in A$. 
We separate the discussion into 3 cases. 

Case I: $x_{j+1}\in \bigcup_{b\in A} V_{b, j}$. In this case we may set $V_{b, j+1}=V_{b, j}$ for all $b\in A$. 

Case II: $x_{j+1}\not\in \bigcup_{b\in A} \overline{V_{b, j}}$. By condition (d), the point $x_{j+1}$ is in some $U\in \cU_n$. Then we can find some $a\in A_U$ such that $x_{j+1}\in B(a, r_n)$. Take a small $r>0$ such that  $\overline{B(x_{j+1}, r)}\subseteq U\cap B(a, r_n)$ and 
$\overline{B(x_{j+1}, r)}$ is disjoint with $\bigcup_{b\in A} \overline{V_{b, j}}$. Setting $V_{a, j+1}=B(x_{j+1}, r)\cup V_{a, j}$ and $V_{b, j+1}=V_{b, j}$ for all $b\in A\setminus \{a\}$ satisfies the requirement. 

Case III: $x_{j+1}\in \partial V_{a, j}$ for some $a\in A$. Then $a$ is in some $U\in \cU_n$. By the condition (ii) we have $x_{j+1}\not\in \overline{V_{b, j}}$ for all $b\in A\setminus \{a\}$ and $x_{j+1}\in U\cap B(a, r_n)$. Take a small $r>0$ such that  $\overline{B(x_{j+1}, r)}\subseteq U\cap B(a, r_n)$ and 
$\overline{B(x_{j+1}, r)}$ is disjoint with $\bigcup_{b\in A\setminus \{a\}} \overline{V_{b, j}}$. Setting $V_{a, j+1}=B(x_{j+1}, r)\cup V_{a, j}$ and $V_{b, j+1}=V_{b, j}$ for all $b\in A\setminus \{a\}$ satisfies the requirement. 

This finishes the construction of $V_{a, j+1}$ for $a\in A$.
\end{proof}

The next lemma shows that the extension in Lemma \ref{L-zero-dim extension} (along the inverse direction of the factor map) can somehow preserve distal points.

\begin{lemma} \label{L-one-one to point-distal}
Let $G$ act on compact metrizable spaces $Z$ and $X$ continuously and let $\pi: Z\rightarrow X$ be a factor map. 
Let $z$ be a point in $Z$ such that $\pi^{-1}(\pi(z))=\{z\}$ and $\pi(z)$ is a distal point in $X$.  Then $z$ is a distal point in $Z$. 
\end{lemma}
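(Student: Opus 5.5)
We argue directly from the definition of distality: $z$ is distal if it is not proximal to any other point of its orbit closure $\overline{Gz}$. So we take $w\in\overline{Gz}$ proximal to $z$ and aim to show $w=z$. The idea is to push the proximality down to $X$ via $\pi$, use distality of $\pi(z)$ there, and then pull the resulting equality back up using the fibre condition $\pi^{-1}(\pi(z))=\{z\}$.

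First, since $\pi$ is continuous, surjective onto $X$, and $G$-equivariant, we have $\pi(\overline{Gz})\subseteq\overline{G\pi(z)}$, so $\pi(w)\in\overline{G\pi(z)}$. Next, proximality passes to factors. Fix compatible metrics on $Z$ and $X$, and choose a sequence $\{g_n\}$ in $G$ with $d(g_nz,g_nw)\to 0$. By compactness of $Z$ we pass to a subsequence with $g_nz\to u$; then also $g_nw\to u$. Continuity and equivariance give $g_n\pi(z)=\pi(g_nz)\to\pi(u)$ and $g_n\pi(w)\to\pi(u)$. Hence $\pi(z)$ and $\pi(w)$ are proximal in $X$.

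Since $\pi(z)$ is a distal point of $X$ and $\pi(w)$ lies in its orbit closure, the two points cannot be distinct proximal points, so $\pi(w)=\pi(z)$. Thus $w\in\pi^{-1}(\pi(z))=\{z\}$, and $w=z$, which shows that $z$ is distal in $Z$.

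There is no serious obstacle in this argument. The only points to check are that proximality survives passage to the factor, which uses only compactness and continuity, and that $\pi(w)$ indeed lies in the orbit closure of $\pi(z)$. Note that this uses the paper's relative notion of distal point (distal within the orbit closure), not absolute distality. If one preferred, the IP*-recurrence characterization of distal points from Section~\ref{sec:2} could be used instead, but the direct argument is shorter.
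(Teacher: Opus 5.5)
Your proof is correct and follows the paper's argument exactly: push the proximal pair $(z,w)$ down to $(\pi(z),\pi(w))$, use distality of $\pi(z)$ to get $\pi(w)=\pi(z)$, and conclude $w=z$ from $\pi^{-1}(\pi(z))=\{z\}$. The only difference is that you spell out why proximality and orbit-closure membership pass to the factor, which the paper states without proof.
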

\begin{proof} Let $y\in Z$ be in the orbit closure $\overline{Gz}$ of $z$ such that $(z, y)$ is proximal. Then $\pi(y)$ is in the orbit closure of $\pi(z)$ and $(\pi(z), \pi(y))$ is proximal. Since $\pi(z)$ is distal, we have $\pi(y)=\pi(z)$. As $\pi^{-1}(\pi(z))=\{z\}$, we conclude that $y=z$. Thus $z$ is distal.
\end{proof}

We are now ready to prove the main theorem of this section.

\begin{theorem} \label{P-effective point-distal}
Let $G$ be  a countable discrete group and let $A$ be a finite set with $|A|\ge 2$. The following are equivalent:
\begin{enumerate}
\item The set of distal points in $(A^G,G)$ is dense in $A^G$;
\item $G$ has an effective point-distal continuous action on some compact metrizable space;
\item For each $g\in G\setminus \{e_G\}$, there is a point-distal continuous action of $G$ on a compact metrizable space $X$ such that 
           $gx\neq x$ for some $x\in X$;
\item For each $g\in G\setminus \{e_G\}$, there is  a point-distal continuous  action of $G$ on a compact metrizable space $X$ such that $gx\neq x$ for some distal point $x\in X$;
\item    There exist a minimal continuous action of $G$ on a compact metrizable space $X$ and  a distal point in $X$ with trivial stabilizer;
\item    There exist a minimal continuous action of $G$ on a zero-dimensional compact metrizable space $X$ and  a distal point in $X$ with trivial stabilizer.
\end{enumerate}
\end{theorem}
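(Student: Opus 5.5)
The plan is to prove the cycle (1)$\Rightarrow$(2)$\Rightarrow$(3)$\Rightarrow$(4)$\Rightarrow$(5)$\Rightarrow$(6)$\Rightarrow$(1).

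\emph{(1)$\Rightarrow$(2).} Let $D$ be the set of distal points in $A^G$. For each $g\neq e_G$ pick, by density, a distal $x_g\in A^G$ with $x_g(e_G)\neq x_g(g)$; then $g\cdot x_g\neq x_g$, since $(g\cdot x_g)(e_G)=x_g(g)$. The action on the orbit closure $\overline{Gx_g}$ is point-distal and $g$ acts nontrivially on it. To obtain a single effective action, enumerate $G\setminus\{e_G\}=\{g_1,g_2,\dots\}$ and take the point $x=(x_{g_k})_k$ in the product $(A^G)^{\Nb}$ with the diagonal action, and then pass to $\overline{Gx}$. The needed fact is that a point of a product all of whose coordinates are distal is distal. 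This holds because IP*-recurrence characterizes distality (the proposition from \cite{DLX}) and IP*-sets form a filter: a basic neighbourhood of $x$ only restricts finitely many coordinates, so its return-time set is a finite intersection of IP*-sets. The resulting action is effective, since $g_kx\neq x$ in the $k$-th coordinate.

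\emph{(2)$\Rightarrow$(3)} is immediate. For \emph{(3)$\Rightarrow$(4)}, let $x_0$ be a distal point with dense orbit. If $g$ fixed every point of $Gx_0$, then by continuity $g$ would fix all of $X$. Hence $g(hx_0)\neq hx_0$ for some $h$, and $hx_0$ is distal because the set of distal points is $G$-invariant.

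\emph{(4)$\Rightarrow$(5).} Take, for each $g_k$, a distal point $x_k$ in $X_k$ with $g_kx_k\neq x_k$, and form the product point $x=(x_k)$ in $\prod_k X_k$, which is again distal by the argument above. The stabilizer of $x$ is $\bigcap_k \mathrm{Stab}(x_k)$, which contains no $g_k$ and is therefore trivial. Let $X=\overline{Gx}$. This system is minimal by Corollary~\ref{minimal} and Proposition~\ref{P-minimal vs uniformly recurrent}, and $x$ is distal in $X$.

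\emph{(5)$\Rightarrow$(6).} This is where the lemmas just proved are used. Since $X$ is minimal, $x$ has dense orbit. Lemma~\ref{L-avoid orbit} gives families $\cU_n$ satisfying (a)--(d). Lemma~\ref{L-zero-dim extension} gives a zero-dimensional extension $\pi\colon Z\to X$, where $Z$ is an inverse limit of closed subsets of the $(\cU_n)^G$. Lemma~\ref{L-unique preimage} gives $\pi^{-1}(x)=\{z\}$, and Lemma~\ref{L-one-one to point-distal} shows that $z$ is distal in $Z$. The stabilizer of $z$ is trivial: if $gz=z$ then $gx=\pi(gz)=x$, so $g=e_G$. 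Restricting to $\overline{Gz}$ gives a zero-dimensional minimal system, again by Corollary~\ref{minimal}.

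\emph{(6)$\Rightarrow$(1).} This is the main step. Fix basic data: a finite $F\subseteq G$ and a pattern $p\in A^F$. We must find a distal point of $A^G$ extending $p$. The freeness of the orbit of $x$ lets us separate the finitely many points $fx$, $f\in F$. Since $X$ is zero-dimensional and these points are distinct, choose a partition of $X$ into clopen sets $\{C_a\}_{a\in A}$ with $fx\in C_{p(f)}$ for each $f\in F$. Define a continuous map $\phi\colon X\to A^G$ by $\phi(y)(h)=a$ iff $hy\in C_a$. This map is equivariant for the right shift, because $\phi(gy)(h)=\phi(y)(hg)$. Then $\phi(x)$ extends $p$.

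It remains to see that $\phi(x)$ is distal, i.e.\ that factor maps carry distal points to distal points. I would argue with IP*-recurrence: for a neighbourhood $V$ of $\phi(x)$, the set $\{g: gx\in\phi^{-1}(V)\}$ is IP*, and it is contained in $\{g : g\phi(x)\in V\}$. The delicate point to check is exactly this separation step, which relies on $F$ being finite and the stabilizer being trivial.
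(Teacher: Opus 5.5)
Your proof is correct and has the same architecture as the paper's: a product of distal points gives a distal point with trivial stabilizer in a minimal system, the zero-dimensional extension lemmas make that system zero-dimensional with a unique preimage of the point, and coding by a clopen partition returns to $A^G$. The differences are local and make the argument more elementary. For (3)$\Rightarrow$(4) you use the dense orbit of the given distal point together with $G$-invariance of distal points, where the paper cites Ellis's dense $G_\delta$ theorem. You also derive both the distality of a product of distal points and the distality of the coded point $\phi(x)$ directly from the IP*-recurrence characterization and the filter property of IP*-sets, where the paper asserts the first and cites de Vries for the second.
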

\begin{proof} (1)$\Rightarrow$(3): Let $g\in G\setminus \{e_G\}$. Since $|A|\ge 2$, by (1) we can find some distal point $x\in A^G$ such
$x_{e_G}\neq x_g$. Then the $G$-action on $\overline{Gx}$ is minimal by Corollary \ref{minimal}. Since 
$$(gx)_{e_G}=x_g\neq x_{e_G},$$
we have $gx\neq x$. 

(3)$\Rightarrow$(4): Let $g\in G\setminus \{e_G\}$. Let $X$ and $x\in X$ witness (3) for $g$. By a result of Ellis \cite{Ellis73} (see also \cite[VI.6.4.6]{deVries}) there is a dense $G_\delta$ subset of $X$ consisting of distal points. Thus, perturbing $x$ if necessary, we may assume that $x$ is distal. 

(4)$\Rightarrow$(5): When $G=\{e_G\}$, (5) holds trivially. Thus we may assume that $G\neq \{e_G\}$. For each $g\in G\setminus \{e_G\}$, by (4) we can find a point-distal continuous action of $G$ on a compact metrizable space $X_g$ and a distal point $x_g\in X_g$ such that $gx_g\neq x_g$. Consider the diagonal action of $G$ on $\prod_{g\in G\setminus \{e_G\}}X_g$. Then $x:=(x_g)_{g\in G\setminus \{e_G\}}\in \prod_{g\in G\setminus \{e_G\}}X_g$ is distal and has trivial stabilizer.
Set $X$ to be the orbit closure of $x$ in $\prod_{s\in G\setminus \{e_G\}}X_s$. Then the $G$-action on $X$ is minimal.

(5)$\Rightarrow$(6): Let $X$ and $x\in X$ witness (5). Take a compatible metric $\rho$ on $X$. By Lemma~\ref{L-avoid orbit}  we can find a sequence $\{\cU_n\}_{n\in \Nb}$ of finite families of nonempty open subsets of $X$ satisfying the conditions in Lemma~\ref{L-avoid orbit}. Then we have the subshift $Z_n$ for each $n\in \Nb$ and the factor map $\pi: Z=\varprojlim Z_n\rightarrow X$ given by Lemma~\ref{L-zero-dim extension}. Since each $Z_n$ is zero-dimensional, so is $Z$. By Lemma~\ref{L-unique preimage} and condition (d) of Lemma~\ref{L-avoid orbit} we have $|\pi^{-1}(x)|=1$. Say, $\pi(z)=x$. By Lemma~\ref{L-one-one to point-distal} the point $z$ is distal in $Z$. Since $x$ has trivial stabilizer, so does $z$. Denote by $Y$ the orbit closure of $z$ in $Z$. Then $Y$ is zero-dimensional, and $z$ is distal in $Y$. Thus by Corollary~\ref{minimal} and Proposition~\ref{P-minimal vs uniformly recurrent} the $G$-action on $Y$ is minimal. 

(6)$\Rightarrow$(1): Let $X$ and $x\in X$ witness (6). Let $U$ be a nonempty open subset of $A^G$. Then there are a nonempty finite subset $F$ of $G$ and a $p\in A^F$ such that the clopen set 
$$A_p:=\{y\in A^G: y|_F=p\}$$
is contained in $U$. Since $x$ has trivial stabilizer, the points $gx$ for $g\in F$ are distinct. As $X$ is zero-dimensional, we can find a clopen partition $X=\bigsqcup_{a\in A} X_a$ of $X$ such that for each $g\in F$ one has $gx\in X_{p(g)}$. Define a map $\pi: X\rightarrow A^G$ by
$$\pi(z)_g=a \mbox{ if } gz\in X_a, a\in A,$$ 
for $z\in X$ and $g\in G$. Then $\pi$ is continuous and $G$-equivariant. Thus $\pi$ is a factor map from $X$ to $\pi(X)$. For any factor map $X'\rightarrow Y'$ between minimal continuous actions of $G$ on compact metrizable spaces, the images of distal points are distal (see \cite[Corollary IV.3.25.2]{deVries}). Thus $\pi(x)$ is distal in $\pi(X)$. Note that $\pi(x)\in A_p\subseteq U$. Therefore (1) holds. 

(5)$\Rightarrow$(2)$\Rightarrow$(3) are trivial. 
\end{proof}

Next we present an example of a $2$-step nilpotent group $G$ which is not MAP but the set of distal points in $2^G$ is dense. Together with Example~\ref{G_2} we will have two examples of 2-step nilpotent groups with different behaviors regarding the denseness of their distal points. 

\begin{example}\label{E-effective point-distal action}
Let $p$ be a prime integer. Denote by $G$ the set of nonzero rational numbers which can be written as $\frac{a}{b}$ such that $a, b$ are integers not divisible by $p$. Then $G$ is a group under multiplication. The group $G$ acts on the additive group $\Qb$ by group automorphisms via multiplication. Denote by $G_1$ the semidirect product $\mathbb{Q}\rtimes G$. It is easily checked that $G_1$ is 2-step nilpotent. It is shown in \cite[Example 4.6]{Abels83} that $G_1$ has an effective distal continuous action on a compact metrizable space. In particular, $G_1$ satisfies the conditions in Theorem~\ref{P-effective point-distal}. On the other hand, it is also shown in \cite[Example 4.6]{Abels83} that $G_1$ is not MAP. 
\end{example}

\section{Point-Distal Radicals\label{sec:6}}

In this section we define a notion of point-distal radical of a countable discrete group $G$ and show that the denseness of distal points in $(2^G, G)$ implies that the point-distal radical of $G$ is trivial. 

Let $H$ be a normal subgroup of $G$ and let $\Gamma$ be a compact  metrizable group. The space $\Hom(H, \Gamma)$ of group homomorphisms $H\rightarrow \Gamma$ is a closed subset of $\Gamma^H$, thus is a  compact metrizable space. The group $G$ acts on $\Hom(H, \Gamma)$ continuously via conjugation: 
$$g\phi(h)=\phi(g^{-1}hg)$$
for $\phi\in \Hom(H, \Gamma)$, $g\in G$, and $h\in H$. 

\begin{definition} We say $\phi\in \Hom(H, \Gamma)$ is {\it $G$-point-distal} if $\phi$ is distal in its orbit closure under the $G$-action. 
\end{definition}

\begin{remark} \label{R-extension to distal}
Let $G$ be a countable discrete group and let $H$ be a normal subgroup of $G$. If $\Gamma$ is a compact metrizable group and $\phi\in \Hom(H, \Gamma)$ extends to a group homomorphism $G\rightarrow \Gamma$, then $\phi$ is $G$-point-distal. Indeed, $\Gamma$ has a continuous action on $\Hom(H, \Gamma)$ by conjugation. If $\phi\in \Hom(H, \Gamma)$ extends to a group homomorphism $\phi': G\rightarrow \Gamma$, then one has $g\phi=\phi'(g)\phi$ for every $g\in G$, whence the orbit closure of $\phi$ under the $G$-action coincides with the orbit closure of $\phi$ under the action of $\phi'(G)$, which in turn coincides with the orbit $X$ of $\phi$ under the action of $\overline{\phi'(G)}$.  Then the $G$-action on $X$ goes through the action of the compact group 
$\overline{\phi'(G)}$ on $X$ via the group homomorphism $\phi': G\rightarrow \overline{\phi'(G)}$. Note that  a continuous action of $G$ on a compact metrizable space is equicontinuous if and only if it goes through a continuous action of a compact Hausdorff group. Thus the $G$-action on $X$ is equicontinuous, and particularly, distal. 

In particular, for every compact metrizable group $\Gamma$, every $\phi\in \Hom(G, \Gamma)$ is $G$-point-distal. 
\end{remark} 

\begin{notation} \label{N-point-distal kernel}
Let $G$ be a countable discrete group. For a normal subgroup $H$ of $G$, we set
$$ H^{\{G\}}=\bigcap_\phi \ker \phi,$$
where $\phi$ ranges over $G$-point-distal homomorphisms of $H$ to compact metrizable groups. 
\end{notation}

For each $n\in \Nb$, denote by $U_n$ the group of complex unitary $n\times n$ matrices. 

\begin{lemma} \label{L-point-distal kernel}
Let $G$ be a countable discrete group and let $H$ be a normal subgroup of $G$. Then 
$$ H^{\{G\}}=\bigcap_\phi \ker \phi,$$
for $\phi$ ranging over $G$-point-distal homomorphisms of $H$ to $U_n$ for some $n\in \Nb$. 
\end{lemma}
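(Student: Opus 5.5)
One inclusion is immediate: each $U_n$ is a compact metrizable group, so every $G$-point-distal homomorphism $H\to U_n$ appears in the intersection of Notation~\ref{N-point-distal kernel}. Hence $H^{\{G\}}$ is contained in the intersection over $U_n$-valued $G$-point-distal homomorphisms. For the reverse inclusion we fix $h\in H$ with $h\notin H^{\{G\}}$. We need a $G$-point-distal $\psi\colon H\to U_n$, for some $n$, with $\psi(h)\neq e$.

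Since $h\notin H^{\{G\}}$, there are a compact metrizable group $\Gamma$ and a $G$-point-distal $\phi\in\Hom(H,\Gamma)$ with $\phi(h)\neq e_\Gamma$. By Peter--Weyl, the finite-dimensional unitary representations of $\Gamma$ separate points. So there is a continuous homomorphism $\sigma\colon\Gamma\to U_n$ with $\sigma(\phi(h))\neq 1$. Set $\psi=\sigma\circ\phi\in\Hom(H,U_n)$; then $\psi(h)\neq 1$.

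It remains to show that $\psi$ is $G$-point-distal. Consider the map $\Sigma\colon\Hom(H,\Gamma)\to\Hom(H,U_n)$, $\Sigma(\chi)=\sigma\circ\chi$. This map is continuous in the product topologies, since each coordinate $\chi\mapsto\sigma(\chi(h'))$ is continuous. It is also $G$-equivariant:
\[
\Sigma(g\chi)(h')=\sigma(\chi(g^{-1}h'g))=(g\Sigma(\chi))(h').
\]
Let $X=\overline{G\phi}$ be the orbit closure of $\phi$. Then $\Sigma(X)=\overline{G\psi}$, because $\Sigma(X)$ is compact and contains $G\psi$ as a dense subset, and so $\Sigma|_X\colon X\to\overline{G\psi}$ is a factor map.

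Now $\phi$ is distal in $X$, so by Corollary~\ref{minimal} and Proposition~\ref{P-minimal vs uniformly recurrent} the $G$-action on $X$ is minimal. The image system $\overline{G\psi}$ is then minimal as well. We now invoke the fact already used in the proof of Theorem~\ref{P-effective point-distal}: a factor map between minimal systems sends distal points to distal points \cite[Corollary IV.3.25.2]{deVries}. Hence $\psi$ is distal in $\overline{G\psi}$, that is, $\psi$ is $G$-point-distal. This shows $h$ lies outside the $U_n$-intersection and completes the proof.

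The only nontrivial ingredient is the preservation of distal points under factor maps of minimal systems; everything else is formal once Peter--Weyl is invoked. One must check that this applies here: minimality of $X$ comes from the distality of $\phi$, so the cited result is available.
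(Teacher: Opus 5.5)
Your proposal is correct and follows the paper's proof essentially step for step. It uses the trivial inclusion, then Peter--Weyl to obtain a continuous $\sigma\colon\Gamma\to U_n$ separating $\phi(h)$ from the identity, and finally pushes distality through the equivariant continuous map $\chi\mapsto\sigma\circ\chi$, using minimality of the orbit closure (from Corollary~\ref{minimal}) and the de Vries result that factor maps between minimal systems preserve distal points.
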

\begin{proof} Denote by $H_1$ the right hand side of the equality. Clearly $H^{\{G\}}\subseteq H_1$. 

Let $h\in H\setminus H^{\{G\}}$. Then we can find a compact metrizable group $\Gamma$ and a $G$-point-distal homomorphism $\psi: H\rightarrow \Gamma$ such that $h\not\in \ker \psi$. Note that $\psi(h)$ is not the identity element of $\Gamma$. By the Peter-Weyl theorem \cite[Theorem II.3.1]{BT} we can find a continuous group homomorphism $\phi: \Gamma\rightarrow U_n$ for some $n\in \Nb$ such that $\psi(h)\not\in \ker \phi$. Note that $\phi$ induces a $G$-equivariant continuous map $\phi_*: \Hom(H, \Gamma)\rightarrow \Hom(H, U_n)$ sending $\psi'$ to $\phi\circ \psi'$. For any factor map $X\rightarrow Y$ between minimal continuous actions of $G$ on compact metrizable spaces, the images of distal points are distal \cite[Corollary IV.3.25.2]{deVries}. Write $X$ for the orbit closure of $\psi$ in $\Hom(H, \Gamma)$ under the $G$-action. Since $\psi$ is distal in $X$, by Corollary~\ref{minimal} and Proposition~\ref{P-minimal vs uniformly recurrent} the $G$-action on $X$ is minimal. Thus $\phi_*(\psi)=\phi\circ \psi$ is distal in $\phi_*(X)$. Therefore $\phi\circ \psi$ is $G$-point-distal. As $h\not\in \ker(\phi\circ \psi)$, we conclude that $h\not\in H_1$. Thus $H_1\subseteq H^{\{G\}}$, and hence $H_1=H^{\{G\}}$.
\end{proof}

\begin{lemma} \label{L-point-distal restriction}
Let $G$ be a countable discrete group and let $H_1\subseteq H_2$ be normal subgroups of $G$. Then 
$$ (H_1)^{\{G\}}\subseteq H_1\cap (H_2)^{\{G\}}.$$
\end{lemma}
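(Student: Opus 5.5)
Since $(H_1)^{\{G\}}\subseteq H_1$ holds by definition, the content of the statement is the inclusion $(H_1)^{\{G\}}\subseteq (H_2)^{\{G\}}$. I will prove it in contrapositive form. Take $h\in H_1\setminus (H_2)^{\{G\}}$; I want to show $h\notin (H_1)^{\{G\}}$. By Notation~\ref{N-point-distal kernel} there is a compact metrizable group $\Gamma$ and a $G$-point-distal homomorphism $\psi\colon H_2\to\Gamma$ with $\psi(h)\neq e_\Gamma$. The natural candidate is the restriction $\psi|_{H_1}\in \Hom(H_1,\Gamma)$. Since $h\in H_1$, we have $h\notin\ker(\psi|_{H_1})$, so it suffices to show that $\psi|_{H_1}$ is $G$-point-distal.

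\textbf{Restriction map.} Let $r\colon \Hom(H_2,\Gamma)\to \Hom(H_1,\Gamma)$ be given by $r(\phi)=\phi|_{H_1}$. This map is continuous, because it is a coordinate projection $\Gamma^{H_2}\to\Gamma^{H_1}$. It is also $G$-equivariant: since $H_1$ is normal, for $g\in G$ and $h'\in H_1$ we have $(g\phi)|_{H_1}(h')=\phi(g^{-1}h'g)=(g(\phi|_{H_1}))(h')$.

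\textbf{Transfer of distality.} Let $X$ be the orbit closure of $\psi$ in $\Hom(H_2,\Gamma)$. The point $\psi$ is distal in $X$, so by Corollary~\ref{minimal} and Proposition~\ref{P-minimal vs uniformly recurrent} the $G$-action on $X$ is minimal. By compactness, $r(X)$ is exactly the orbit closure of $\psi|_{H_1}$, and $r\colon X\to r(X)$ is a factor map between minimal systems. The fact already quoted in the proof of Lemma~\ref{L-point-distal kernel} (\cite[Corollary IV.3.25.2]{deVries}) says that images of distal points under such factor maps are distal. Hence $\psi|_{H_1}$ is distal in its orbit closure, i.e.\ it is $G$-point-distal, and therefore $h\notin (H_1)^{\{G\}}$.

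\textbf{Main obstacle.} The only nontrivial input is that distal points in minimal systems push forward under factor maps, and this is cited from de Vries. Everything else is routine verification of continuity, equivariance, and the identification of $r(X)$ with the orbit closure of $\psi|_{H_1}$.
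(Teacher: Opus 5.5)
Your proposal is correct and follows essentially the same route as the paper: restrict to $H_1$ via the continuous $G$-equivariant map $\Hom(H_2,\Gamma)\to\Hom(H_1,\Gamma)$, then push distality forward along a factor map of minimal systems (de Vries), exactly as in the proof of Lemma~\ref{L-point-distal kernel}. You have simply written out the verifications (equivariance via normality, identification of $r(X)$ with the orbit closure) that the paper leaves implicit.
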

\begin{proof} Let $\Gamma$ be a compact metrizable group. Then we have the restriction map $\pi: \Hom(H_2, \Gamma)\rightarrow \Hom(H_1, \Gamma)$. Clearly $\pi$ is $G$-equivariant and continuous. Arguing as in the proof of Lemma~\ref{L-point-distal kernel} we see that if $\phi\in \Hom(H_2, \Gamma)$ is $G$-point-distal, then so is $\pi(\phi)\in \Hom(H_1, \Gamma)$. It follows that $H_1\setminus (H_2)^{\{G\}}\subseteq H_1\setminus (H_1)^{\{G\}}$, equivalently, $(H_1)^{\{G\}}\subseteq H_1\cap (H_2)^{\{G\}}$.
\end{proof}

Let $G$ be a countable discrete group. We define a normal subgroup $H_\alpha$ of $G$ for each ordinal $\alpha$ transfinitely as follows. We start with 
$$H_0=G.$$
Once we have defined $H_\alpha$, we set
$$H_{\alpha+1}=(H_\alpha)^{\{G\}}.$$
For a limit ordinal $\beta$, if we have defined $H_\gamma$ for all $\gamma<\beta$, we set
$$H_\beta=\bigcap_{\gamma<\beta}H_\gamma.$$
Then $H_\alpha\supseteq H_\beta$ whenever $\alpha\le \beta$. There is a smallest countable ordinal $\kappa$ satisfying that $H_\kappa=H_{\kappa+1}$. We call $H_\kappa$ the {\it point-distal radical} of $G$. 

\begin{proposition} \label{P-point-distal radical}
Let $G$ be a countable discrete group. The point-distal radical of $G$ is the largest normal subgroup $H$ of $G$ satisfying $H^{\{G\}}=H$. 
\end{proposition}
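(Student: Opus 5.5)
We prove two things: that the point-distal radical $H_\kappa$ satisfies $H_\kappa^{\{G\}}=H_\kappa$, and that every normal subgroup $H$ of $G$ with $H^{\{G\}}=H$ is contained in $H_\kappa$. The first is immediate from the definition: $\kappa$ was chosen with $H_\kappa=H_{\kappa+1}=(H_\kappa)^{\{G\}}$.

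Before the main argument we record some preliminary facts. Each $H_\alpha$ is normal in $G$, by transfinite induction. The successor case needs that $H^{\{G\}}$ is normal whenever $H$ is. This holds because the family of $G$-point-distal homomorphisms is closed under the $G$-action, and $\ker(g\phi)=g\ker(\phi)g^{-1}$. Indeed, $g\phi$ lies in the orbit of $\phi$, and the set of distal points is $G$-invariant, so $g\phi$ is again $G$-point-distal. Intersections of normal subgroups are normal, which handles the limit case. We should also justify that a countable $\kappa$ with $H_\kappa=H_{\kappa+1}$ exists. The sequence $(H_\alpha)$ is a decreasing chain of subgroups of the countable group $G$, so it can strictly decrease only countably many times and must stabilize at some countable ordinal. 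Once $H_\alpha=H_{\alpha+1}$, all later terms agree.

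For maximality, let $H$ be normal in $G$ with $H^{\{G\}}=H$. We show by transfinite induction on $\alpha$ that $H\subseteq H_\alpha$.
\begin{itemize}
\item The base case is $H\subseteq G=H_0$.
\item At limit ordinals, the inclusion passes to the intersection.
\item For the successor step, assume $H\subseteq H_\alpha$. Lemma~\ref{L-point-distal restriction} with $H_1=H$ and $H_2=H_\alpha$ gives
$$H=H^{\{G\}}\subseteq H\cap (H_\alpha)^{\{G\}}\subseteq H_{\alpha+1}.$$
\end{itemize}
Hence $H\subseteq H_\kappa$. Consequently $H_\kappa$ is the largest normal subgroup with the stated property.

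None of these steps is a real obstacle. The only point needing care is the normality of $H^{\{G\}}$, which is the $G$-invariance of the set of $G$-point-distal homomorphisms.
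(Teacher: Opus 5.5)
Your proof is correct and follows the paper's argument: $(H_\kappa)^{\{G\}}=H_\kappa$ holds by the choice of $\kappa$, and maximality follows by transfinite induction on $\alpha$, using Lemma~\ref{L-point-distal restriction} at successor steps and intersections at limit steps. Your additional checks, that $H^{\{G\}}$ is normal (via $\ker(g\phi)=g\ker(\phi)g^{-1}$ and $G$-invariance of distal points) and that a countable stabilizing $\kappa$ exists, are also correct; the paper leaves both implicit in its definition of the point-distal radical.
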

\begin{proof} Let $H_\alpha$ and $\kappa$ be as in the definition of the point-distal radical. Then 
$$(H_\kappa)^{\{G\}}=H_{\kappa+1}=H_\kappa.$$

Let $H$ be a normal subgroup of $G$ satisfying $H^{\{G\}}=H$. We claim that $H\subseteq H_\alpha$ for each ordinal $\alpha$. Clearly $H\subseteq G=H_0$. If $H\subseteq H_\alpha$ for some $\alpha$, then by Lemma~\ref{L-point-distal restriction} we have
$$ H=H^{\{G\}}\subseteq H\cap (H_\alpha)^{\{G\}}= H\cap H_{\alpha+1},$$
whence $H\subseteq H_{\alpha+1}$. If $\alpha$ is a limit ordinal and $H\subseteq H_\gamma$ for all $\gamma<\alpha$, then
$$ H\subseteq \bigcap_{\gamma<\alpha}H_\gamma= H_\alpha.$$
This proves our claim. In particular, $H\subseteq H_\kappa$. 
\end{proof}

\begin{corollary} \label{C-point-distal radical}
Let $G$ be a countable discrete group. The following are equivalent:
\begin{enumerate}
\item For every normal subgroup $H$ of $G$ with $H\neq \{e_G\}$, one has $H^{\{G\}}\neq H$;
\item The point-distal radical of $G$ is $\{e_G\}$.
\end{enumerate}
\end{corollary}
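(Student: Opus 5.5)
This corollary should follow directly from Proposition~\ref{P-point-distal radical}, together with the observation that $H^{\{G\}}\subseteq H$ always holds. That inclusion is immediate from Notation~\ref{N-point-distal kernel}, since $H^{\{G\}}$ is an intersection of kernels of homomorphisms defined on $H$. If there happen to be no $G$-point-distal homomorphisms, we use the convention that the empty intersection is $H$. In fact the trivial homomorphism $H\to\{e\}$ is always $G$-point-distal, so this degenerate case never occurs.

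For (1)$\Rightarrow$(2), let $H_\kappa$ be the point-distal radical. By Proposition~\ref{P-point-distal radical}, or directly from the definition of $\kappa$, we have $(H_\kappa)^{\{G\}}=H_{\kappa+1}=H_\kappa$. Each $H_\alpha$ is a normal subgroup of $G$, which we check by transfinite induction. The key point is that $H^{\{G\}}$ is normal in $G$ whenever $H$ is. To see this, let $\phi$ be $G$-point-distal and $g\in G$. Then $g\phi$ lies in the same orbit and is again distal, because the set of distal points is $G$-invariant. Also $\ker(g\phi)=g\ker(\phi)g^{-1}$, so the family of kernels is closed under conjugation and their intersection is normal. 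Intersections of normal subgroups are normal, which handles limit stages. Since $H_\kappa$ is normal and satisfies $(H_\kappa)^{\{G\}}=H_\kappa$, hypothesis (1) forces $H_\kappa=\{e_G\}$.

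For (2)$\Rightarrow$(1), let $H$ be a normal subgroup with $H^{\{G\}}=H$. Proposition~\ref{P-point-distal radical} says $H$ is contained in the radical, which is $\{e_G\}$. Hence $H=\{e_G\}$, which is the contrapositive of (1).

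The only step with any content is the normality of $H^{\{G\}}$, and even that is routine. The argument therefore amounts to unpacking Proposition~\ref{P-point-distal radical}.
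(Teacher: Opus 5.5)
Your proof is correct and is essentially the paper's argument; the paper gives no separate proof and treats the corollary as an immediate consequence of Proposition~\ref{P-point-distal radical}. That proposition says the radical $H_\kappa$ is a normal subgroup with $(H_\kappa)^{\{G\}}=H_\kappa$ that contains every such normal subgroup. Your check that $H^{\{G\}}$ is normal, via $\ker(g\phi)=g\ker(\phi)g^{-1}$ and the $G$-invariance of distal points, fills in a point the paper takes for granted when it asserts that each $H_\alpha$ is normal.
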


\begin{remark} \label{R-distal radical and Bohr radical}
Let $G$ be a countable discrete group, let $H$ be a normal subgroup of $G$, and let $\Gamma$ be a compact metrizable group. We say $\phi\in \Hom(H, \Gamma)$ is {\it $G$-distal} if the $G$-action on the orbit closure of $\phi$ in $\Hom(H, \Gamma)$ is distal. We set $H^{(G)}$ as $\bigcap_\phi \ker \phi$ for $\phi$ ranging over $G$-distal homomorphisms of $H$ to compact metrizable groups. (This is \cite[Definition 2.4]{Abels83} in the case of countable discrete groups.) Then we define the {\it distal radical} of $G$ in the same way as the point-distal radical, replacing $(H_\alpha)^{\{G\}}$ by $(H_\alpha)^{(G)}$. The distal analogues of Lemma~\ref{L-point-distal restriction}, Proposition~\ref{P-point-distal radical}, and Corollary~\ref{C-point-distal radical} all hold. (The distal analogue of Corollary~\ref{C-point-distal radical} was mentioned in the last paragraph of \cite[page 175]{Abels83}.)

Write $\widetilde{H}$ for the kernel of the canonical homomorphism from $H$ to its Bohr compactification, i.e., $\bigcap_\phi \ker \phi$ for $\phi$ ranging over group homomorphisms of $H$ to compact metrizable groups.  Then we define the {\it Bohr radical} of $G$ in the same way as the point-distal radical, replacing $(H_\alpha)^{\{G\}}$ by $\widetilde{H_\alpha}$. The Bohr analogues of Lemma~\ref{L-point-distal restriction}, Proposition~\ref{P-point-distal radical}, and Corollary~\ref{C-point-distal radical} all hold.

Clearly, one has
$$\widetilde{H}\subseteq H^{\{G\}}\subseteq H^{(G)}$$
for every normal subgroup $H$ of $G$, whence
$$ \mbox{Bohr radical } \subseteq \mbox{ point-distal radical } \subseteq \mbox{ distal radical}.$$
From Remark~\ref{R-extension to distal} one has
$$\widetilde{G}=G^{\{G\}}=G^{(G)}.$$
\end{remark}

Let $\pi: X\rightarrow Y$ be a factor map between minimal continuous actions of $G$ on compact metrizable spaces $X$ and $Y$.  We say that $\pi$ is {\it almost $1$-$1$} \cite[Remark IV.6.1.2]{deVries}, if there is some $y\in Y$ with $|\pi^{-1}(y)|=1$.  We say that $\pi$ is {\it isometric} \cite[Remark V.5.20.2 and Corollary V.5.19]{deVries} if there is a continuous function $d: R_\pi\rightarrow [0, \infty)$, where $R_\pi=\{(x_1, x_2)\in X^2: \pi(x_1)=\pi(x_2)\}$, satisfying the following conditions:
\begin{enumerate}
\item for each $y\in Y$, the restriction of $d$ on $\pi^{-1}(y)\times \pi^{-1}(y)$ is a (compatible) metric on $\pi^{-1}(y)$;
\item $d$ is $G$-invariant in the sense that $d(gx_1, gx_2)=d(x_1, x_2)$ for all $(x_1, x_2)\in R_\pi$ and $g\in G$.
\end{enumerate}

We say that a minimal continuous action of $G$ on a compact metrizable space $X$ is {\it strictly AI} \cite[Remark VI.4.17.2]{deVries} if there is a transfinite sequence of  minimal continuous actions of $G$ on compact metrizable spaces $X_\beta$ for $0\le \beta\le \alpha$, where $\alpha$ is an ordinal, together with a factor map $\pi_{\gamma, \beta}: X_\beta\rightarrow X_\gamma$ for all $0\le \gamma\le \beta\le \alpha$ being compatible in the sense that $\pi_{\theta, \gamma}\circ \pi_{\gamma, \beta}=\pi_{\theta, \beta}$ for all $0\le \theta\le \gamma\le \beta\le \alpha$ and $\pi_{\beta, \beta}$ is the identity map on $X_\beta$ for all $0\le \beta\le \alpha$, so that the following hold:
\begin{enumerate}
\item for each $\beta<\alpha$, $\pi_{\beta, \beta+1}$ is either almost $1$-$1$ or isometric;
\item for each limit ordinal $\beta\le \alpha$, $X_\beta$ is the inverse limit of $X_\gamma$ for $\gamma<\beta$;
\item $X=X_\alpha$ and $X_0$ is a singleton. 
\end{enumerate} 
Note that, since $X=X_\alpha$ is metrizable, the set of $\beta<\alpha$ with $\pi_{\beta, \beta+1}$ failing to be injective is countable.  

The following lemma is due to Ellis \cite{Ellis73} \cite[VI.6.4.6]{deVries}. In fact $X$ has a dense $G_\delta$ subset consisting of distal points, but we do not need this fact here. 

\begin{lemma} \label{L-strictly AI are point-distal}
Let a countable discrete group $G$ act on  a compact metrizable space $X$ continuously and minimally. Assume that the action $G\curvearrowright X$ is strictly AI. Then $X$ has a distal point. 
\end{lemma}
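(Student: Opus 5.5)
The plan is a transfinite induction along the tower $(X_\beta,\pi_{\gamma,\beta})_{0\le\beta\le\alpha}$ witnessing that $G\curvearrowright X$ is strictly AI. For each $\beta\le\alpha$ we will choose a point $x_\beta\in X_\beta$ such that $x_\beta$ is distal in $X_\beta$ and the choices are coherent: $\pi_{\gamma,\beta}(x_\beta)=x_\gamma$ for $\gamma\le\beta$. Since all $X_\beta$ are minimal, distal in $X_\beta$ is the same as distal in the orbit closure. At the end, $x_\alpha$ is a distal point of $X=X_\alpha$. We start with $x_0$ equal to the unique point of the singleton $X_0$, which is trivially distal.

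\emph{Limit stage.} Let $\beta$ be a limit ordinal. Since $X_\beta=\varprojlim_{\gamma<\beta}X_\gamma$, the coherent family $(x_\gamma)_{\gamma<\beta}$ determines a unique point $x_\beta$. Suppose $y\in X_\beta$ is proximal to $x_\beta$. Then for each $\gamma<\beta$ the points $\pi_{\gamma,\beta}(y)$ and $x_\gamma$ are proximal, so they are equal. The projections separate points of the inverse limit, hence $y=x_\beta$.

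\emph{Almost $1$-$1$ successor.} Let $\pi=\pi_{\beta,\beta+1}$ and suppose it is almost $1$-$1$. The set $Y_0=\{y\in X_\beta:|\pi^{-1}(y)|=1\}$ is a nonempty $G$-invariant $G_\delta$ set (upper semicontinuity of fibers), and it is dense by minimality. The problem is that $x_\beta$ need not lie in $Y_0$. Lemma~\ref{L-one-one to point-distal} handles the case when it does. To get around this, strengthen the induction hypothesis: at each stage keep a \emph{dense $G_\delta$} set $D_\beta\subseteq X_\beta$ of distal points, with $D_\beta\subseteq\pi_{\gamma,\beta}^{-1}(D_\gamma)$ for $\gamma\le\beta$.
\begin{itemize}
\item At an almost $1$-$1$ step, set $D_{\beta+1}=\pi^{-1}(D_\beta\cap Y_0)$. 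Points of this set are distal by Lemma~\ref{L-one-one to point-distal}. It is a $G_\delta$, and it is dense because almost $1$-$1$ extensions of minimal systems are irreducible, so preimages of dense $G_\delta$ sets are dense.
\item At an isometric step, put $D_{\beta+1}=\pi^{-1}(D_\beta)$. This is a dense $G_\delta$ since $\pi$ is open: isometric maps of minimal systems are open.
\item At limit stages we need the inverse limit of the $D_\gamma$ to be a dense $G_\delta$. Here we use the remark that only countably many steps fail to be injective. So we can reindex to a countable tower, where a countable intersection of dense $G_\delta$ sets in the compact metrizable $X_\beta$ stays dense $G_\delta$ by Baire category, with preimages under open or irreducible maps.
\end{itemize}
This is Ellis's argument, so if this bookkeeping becomes heavy I would instead cite it as the paper does.

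\emph{Isometric successor (main obstacle).} We need that every point of $\pi^{-1}(x_\beta)$ is distal whenever $x_\beta$ is distal and $\pi$ is isometric. Let $z\in\pi^{-1}(x_\beta)$ and suppose $w$ is proximal to $z$, say $d_X(g_nz,g_nw)\to0$. Then $\pi(w)$ is proximal to $\pi(z)=x_\beta$, hence $\pi(w)=x_\beta$, so $(z,w)\in R_\pi$. Passing to a subsequence, $g_nz\to u$ and $g_nw\to u$. Continuity of $d$ on $R_\pi$ gives $d(g_nz,g_nw)\to d(u,u)=0$. By $G$-invariance, $d(g_nz,g_nw)=d(z,w)$ for every $n$, so $d(z,w)=0$ and $z=w$. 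I expect the delicate part of the whole proof to be the density and $G_\delta$ bookkeeping at almost $1$-$1$ and limit stages, not this step.
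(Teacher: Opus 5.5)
Your argument is correct, and it is essentially the proof behind the paper's citation. The paper does not prove this lemma itself; it attributes it to Ellis \cite{Ellis73} (see \cite[VI.6.4.6]{deVries}) and remarks that the distal points in fact contain a dense $G_\delta$ set. That dense $G_\delta$ set is exactly the strengthened induction hypothesis $D_\beta$ you carry through the three kinds of steps: almost $1$-$1$ steps via Lemma~\ref{L-one-one to point-distal} on the residual set of singleton fibers, isometric steps via the invariant fiber metric, and limit steps via Baire category after using countability of the non-injective steps.

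Two small simplifications are available. First, the initial coherent-point scheme can be dropped entirely. Second, density of the preimages $\pi_{\gamma,\beta}^{-1}(D_\gamma)$ follows uniformly from the fact that every factor map between minimal compact metrizable systems is semi-open. So you need neither openness of isometric extensions nor irreducibility of almost $1$-$1$ ones.
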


\begin{lemma} \label{L-trivial action}
Let a countable discrete group $G$ act on  a compact metrizable space $X$ continuously and minimally. Assume that the action $G\curvearrowright X$ is strictly AI. Let $H$ be a normal subgroup of $G$ with $H^{\{G\}}=H$. Then the action of $H$ on $X$ is trivial. 
\end{lemma}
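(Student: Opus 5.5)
Proceed by transfinite induction along the strictly AI tower $\{X_\beta\}_{0\le\beta\le\alpha}$, proving that $H$ acts trivially on every $X_\beta$. For $\beta=0$ this holds because $X_0$ is a singleton. At a limit ordinal $\beta$, $X_\beta$ is the inverse limit of the $X_\gamma$ for $\gamma<\beta$, and the factor maps are $G$-equivariant. So if $H$ fixes every point of every $X_\gamma$, it fixes every point of the inverse limit. The real work is the successor step. Assume $H$ acts trivially on $X_\beta$, and let $\pi=\pi_{\beta,\beta+1}\colon X_{\beta+1}\to X_\beta$. Then $H$ preserves every fiber $\pi^{-1}(y)$.

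In the almost $1$-$1$ case, choose $y$ with $\pi^{-1}(y)=\{x\}$. Then $hx=x$ for all $h\in H$, because $hx\in\pi^{-1}(hy)=\pi^{-1}(y)$. For $g\in G$ and $h\in H$, normality gives $h(gx)=g(g^{-1}hg)x=gx$. So $H$ fixes the orbit $Gx$ pointwise. By minimality this orbit is dense, and continuity then gives that $H$ acts trivially on $X_{\beta+1}$.

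In the isometric case, let $d$ be the $G$-invariant fiberwise metric. Take $x\in X_{\beta+1}$ with $y=\pi(x)$, and let $K_y$ be the group of isometries of the compact metric space $(\pi^{-1}(y),d)$. This is a compact metrizable group with the topology of uniform convergence. Since $H$ fixes $y$, restriction gives a homomorphism $\phi_y\colon H\to K_y$, and the goal is to show that $\phi_y$ is trivial. Two problems arise: $K_y$ depends on $y$, and $G$ does not fix $y$. To handle this, fix a countable dense set $\{x_i\}$ in $X_{\beta+1}$ and consider the map
\[
\Phi\colon X_{\beta+1}\to \prod_i [0,\infty]^{H}
\]
recording the functions $h\mapsto d(hx,x)$. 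I would rather embed everything into a fixed compact group. Via a Kuratowski-type embedding of $(\pi^{-1}(y),d)$ into a fixed compact space, or simply by using the group $\Gamma=\mathrm{Iso}(\pi^{-1}(y_0),d)$ for a chosen $y_0$, I would build a $G$-equivariant continuous map from a suitable minimal system into $\Hom(H,\Gamma)$ whose image contains $\phi_{y_0}$. A cleaner route is the following. By Lemma~\ref{L-strictly AI are point-distal}, applied to $X_\beta$, which is strictly AI via the truncated tower, pick a distal point $y_0\in X_\beta$. Then define the map $X_\beta\ni y\mapsto$ the action of $H$ on the fiber, transported to $\pi^{-1}(y_0)$ using the $G$-equivariance: for $g$ with $gy_0$ near $y$, the isometries $g\colon\pi^{-1}(y_0)\to\pi^{-1}(gy_0)$ intertwine $\phi_{gy_0}(h)$ with $\phi_{y_0}(g^{-1}hg)=(g\phi_{y_0})(h)$. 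This shows that the $G$-orbit of $\phi_{y_0}$ in $\Hom(H,\Gamma)$ corresponds to the fiber actions over the orbit of $y_0$. Since $y_0$ is distal, the point $\phi_{y_0}$ should be distal in its orbit closure: proximal pairs in $\Hom(H,\Gamma)$ lift to proximal behaviour over $y_0$. Then $\phi_{y_0}$ is $G$-point-distal, so $H=H^{\{G\}}\subseteq\ker\phi_{y_0}$. Hence $H$ fixes $\pi^{-1}(y_0)$ pointwise, and the normality-plus-minimality argument from the almost $1$-$1$ case spreads this to all of $X_{\beta+1}$, since the $G$-translates of this fiber are dense.

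The main obstacle is this isometric step: making the identification of fibers with a single compact group $\Gamma$ rigorous, and proving that distality of $y_0$ transfers to distality of $\phi_{y_0}$ in $\Hom(H,\Gamma)$. I would try the following. Suppose $g_n\phi_{y_0}$ and $g_n\psi$ converge to the same limit, with $\psi$ in the orbit closure of $\phi_{y_0}$. Realize $\psi$ through some point $y'$ in the orbit closure of $y_0$, passing to subnets and using the equicontinuity along fibers given by $d$. Then deduce that $y'$ and $y_0$ are proximal, hence $y'=y_0$, hence $\psi=\phi_{y_0}$ up to the fiber identification. If the identification problem is too awkward, a fallback is to replace $\Gamma$ by the unitary groups $U_n$ through Lemma~\ref{L-point-distal kernel}, using representations of the fiber isometry group.
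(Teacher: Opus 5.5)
Your overall structure matches the paper's proof: the same transfinite induction, the same almost $1$-$1$ and limit steps, a distal point $y_0\in X_\beta$ obtained from Lemma~\ref{L-strictly AI are point-distal}, the restriction homomorphism $\phi_{y_0}\colon H\to\Gamma=\mathrm{Isom}(\pi^{-1}(y_0),d)$, and spreading triviality from one fiber by normality and minimality.

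The gap is the step you flag yourself, namely that $\phi_{y_0}$ is $G$-point-distal, and the argument you sketch for it does not work. You write $\psi=\lim_j k_j\phi_{y_0}$ and $y'=\lim_j k_jy_0$, and from proximality of $\psi$ and $\phi_{y_0}$ in $\Hom(H,\Gamma)$ you want to deduce that $y'$ and $y_0$ are proximal. But in general there is no $G$-map $\overline{G\phi_{y_0}}\to X_\beta$ sending $\phi_{y_0}$ to $y_0$, and $y'$ is not determined by $\psi$. For example, if $H$ happens to act trivially on $X_{\beta+1}$, then $\psi=\phi_{y_0}$ is the trivial homomorphism. Yet $y'$ can be any point of the minimal system $X_\beta$, and such a $y'$ is not proximal to the distal point $y_0$ unless $y'=y_0$. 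Moreover, even when $y'=y_0$ you only get $\psi=\varphi^{-1}\phi_{y_0}(\cdot)\varphi$ for some fiber bijection $\varphi\in I(y_0,y_0)$. That is a conjugate of $\phi_{y_0}$, not $\phi_{y_0}$ itself, and ``up to the fiber identification'' hides exactly what must be proved.

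The missing idea is to prove distality in a common extension and then push it down. The paper uses Abels' space $I(y_0,X_{\beta+1})$ of restrictions $t|_{\pi^{-1}(y_0)}$ with $t\in E(X_{\beta+1})$. By \cite[Proposition 1.4]{Abels83} this is a compact metrizable minimal $G$-system whose elements are bijections between fibers. It has two factor maps, each sending $\mathrm{Id}$ to the relevant point:
\begin{itemize}
\item $q$ onto $X_\beta$, with $q(\mathrm{Id})=y_0$;
\item $\Psi(\varphi)(h)=\varphi^{-1}h\varphi$ onto $\overline{G\phi_{y_0}}$, with $\Psi(\mathrm{Id})=\phi_{y_0}$.
\end{itemize}
One first shows that $\mathrm{Id}$ is distal in $I(y_0,X_{\beta+1})$:
\begin{enumerate}
\item If $\varphi$ is proximal to $\mathrm{Id}$, then $q(\varphi)$ is proximal to $y_0$, so $q(\varphi)=y_0$ because $y_0$ is distal.
\item A common limit of $g_j\mathrm{Id}$ and $g_j\varphi$ yields some $t\in E(X_{\beta+1})$ with $t=t\circ\varphi$ on $\pi^{-1}(y_0)$.
\item Since $t$ is injective on fibers of the isometric extension, $\varphi=\mathrm{Id}$.
\end{enumerate}
Only then does one descend along $\Psi$, using that factor maps between minimal systems send distal points to distal points \cite[Corollary IV.3.25.2]{deVries}.

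So distality has to travel from the extension $I(y_0,X_{\beta+1})$ down to its factors. It cannot go sideways from the factor $\overline{G\phi_{y_0}}$ to the factor $X_\beta$, as your sketch attempts. Your fallback to $U_n$ via Lemma~\ref{L-point-distal kernel} does not help, because the problem is the distality of $\phi_{y_0}$, not the choice of target group.
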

\begin{proof} Let $(X_\beta)_{0\le \beta\le \alpha}$ witness that $G\curvearrowright X$ is strictly AI. We shall show by transfinite induction on $\beta$ that the action of $H$ on $X_\beta$ is trivial for every $0\le \beta\le \alpha$. This is obvious when $\beta=0$. 

Assume that $\beta<\alpha$ and the action of $H$ on $X_\beta$ is trivial. Then $\pi_{\beta, \beta+1}$ is either almost $1$-$1$ or isometric. We separate the discussion into two cases. 

Case I: $\pi_{\beta, \beta+1}$ is almost $1$-$1$. We can find $x\in X_{\beta}$ such that $|\pi_{\beta, \beta+1}^{-1}(x)|=1$. Then $H$ fixes $\pi_{\beta, \beta+1}^{-1}(x)$. Note that $|\pi_{\beta, \beta+1}^{-1}(gx)|=1$ for every $g\in G$. Thus actually $H$ fixes  $\pi_{\beta, \beta+1}^{-1}(gx)=g\pi_{\beta, \beta+1}^{-1}(x)$ for every $g\in G$. Since the action $G\curvearrowright X_{\beta+1}$ is minimal, the $G$-orbit of $\pi_{\beta, \beta+1}^{-1}(x)$ is dense in $X_{\beta+1}$. It follows that the action of $H$ on $X_{\beta+1}$ is trivial.

Case II: $\pi_{\beta, \beta+1}$ is isometric. Let $d: R_{\pi_{\beta, \beta+1}}\rightarrow [0, +\infty)$ witness that $\pi_{\beta, \beta+1}$ is isometric. 
For any $x_1, x_2\in X_\beta$, denote by $I(x_1, x_2)$ the set of $t|_{\pi_{\beta, \beta+1}^{-1}(x_1)}$ where $t$ is in the Ellis semigroup $E(X_{\beta+1})$ of $G\curvearrowright X_{\beta+1}$ and satisfies $t(\pi_{\beta, \beta+1}^{-1}(x_1))\subseteq \pi_{\beta, \beta+1}^{-1}(x_2)$. Fix a point $x_0\in X_\beta$, which we shall determine soon. Set 
$$I(x_0, X_{\beta+1})=\bigcup_{x\in X_\beta}I(x_0, x),$$
endowed with the topology of uniform convergence as a space of maps from $\pi_{\beta, \beta+1}^{-1}(x_0)$ to $X_{\beta+1}$. From \cite[Proposition 1.4]{Abels83} we know that the following hold:
\begin{enumerate}
\item for any $x_1, x_2\in X_\beta$, the set $I(x_1, x_2)$ is nonempty, each $\phi\in I(x_1, x_2)$ is a bijection from $\pi_{\beta, \beta+1}^{-1}(x_1)$ to $\pi_{\beta, \beta+1}^{-1}(x_2)$, and $\phi^{-1}$ lies in $I(x_2, x_1)$;
\item $I(x_0, X_{\beta+1})$ is a compact metrizable space;
\item there is a natural $G$-action on $I(x_0, X_{\beta+1})$ given by
$$g(t|_{\pi_{\beta, \beta+1}^{-1}(x_0)})=(gt)|_{\pi_{\beta, \beta+1}^{-1}(x_0)},$$
where $g\in G$ and $t\in E(X_{\beta+1})$. The $G$-action on $I(x_0, X_{\beta+1})$ is continuous and minimal;
\item the map $q: I(x_0, X_{\beta+1})\rightarrow X_\beta$ sending $\varphi$ to $\pi_{\beta, \beta+1}(\im \varphi)$ is a factor map;
\item the map $R_q=\{(\varphi, \phi)\in I(x_0, X_{\beta+1})^2: q(\varphi)=q(\phi)\}\rightarrow I(x_0, x_0)$ sending $(\varphi, \phi)$ to $\varphi^{-1}\phi$ is continuous. 
\end{enumerate}

By Lemma~\ref{L-strictly AI are point-distal} we may choose $x_0$ to be  distal in $X_\beta$.
Denote by ${\rm Id}(x_0)$ the identity map of $\pi_{\beta, \beta+1}^{-1}(x_0)$. Then 
$${\rm Id}(x_0)\in I(x_0, x_0)\subseteq I(x_0, X_{\beta+1}).$$ 
We claim that ${\rm Id}(x_0)$ is distal in $I(x_0, X_{\beta+1})$ under the $G$-action. Let $\varphi\in I(x_0, X_{\beta+1})$ such that $({\rm Id}(x_0), \varphi)$ is proximal. Then $(q({\rm Id}(x_0)), q(\varphi))=(x_0, q(\varphi))$ is proximal. Since $x_0$ is distal in $X_\beta$, we have $q(\varphi)=x_0$. Thus $\varphi\in I(x_0, x_0)$. Since $({\rm Id}(x_0), \varphi)$ is proximal, there are some $\psi\in I(x_0, X_{\beta+1})$ and  a net $(g_j)$ in $G$ such that both $g_j{\rm Id}(x_0)$ and $g_j\varphi$ converge to $\psi$ as $j\to \infty$. Passing to a subset if necessary, we may assume that $g_j$ converges to some $t\in E(X_{\beta+1})$ as $j\to \infty$. Then for each $y\in \pi_{\beta, \beta+1}^{-1}(x_0)$ we have
$$ t({\rm Id}(x_0)(y))=\psi(y)=t(\varphi(y)).$$
We have $t(\pi_{\beta, \beta+1}^{-1}(x_0))\subseteq \pi_{\beta, \beta+1}^{-1}(x_1)$ for some $x_1\in X_\beta$. 
Since $t|_{\pi_{\beta, \beta+1}^{-1}(x_0)}\in I(x_0, x_1)$ is injective as a map from $\pi_{\beta, \beta+1}^{-1}(x_0)$ to $\pi_{\beta, \beta+1}^{-1}(x_1)$, we conclude that ${\rm Id}(x_0)=\varphi$. This proves our claim.

Recall that the restriction of $d$ on $\pi_{\beta, \beta+1}^{-1}(x)\times \pi_{\beta, \beta+1}^{-1}(x)$ is a compatible metric, denoted by $d_x$, on $\pi_{\beta, \beta+1}^{-1}(x)$ for each $x\in X_\beta$. It is easy to check that for any $x_1, x_2\in X_\beta$, every $\phi\in I(x_1, x_2)$ is an isometry from $\pi_{\beta, \beta+1}^{-1}(x_1)$ to $\pi_{\beta, \beta+1}^{-1}(x_2)$.

Denote by ${\rm Isom}(x_0, d)$ the isometry group of $\pi_{\beta, \beta+1}^{-1}(x_0)$ under $d_{x_0}$. This is a compact metrizable group endowed with the topology of uniform convergence. Let
$$\sigma: H\rightarrow {\rm Isom}(x_0, d)$$
be the group homomorphism by restricting the maps of $H$ to $\pi_{\beta, \beta+1}^{-1}(x_0)$. We define a map 
$$\Psi: I(x_0, X_{\beta+1})\rightarrow \Hom(H, {\rm Isom}(x_0, d))$$
by
$$ \Psi(\phi)(h)=\phi^{-1}h\phi$$
for $\phi\in I(x_0, X_{\beta+1})$ and $h\in H$. Clearly $\Psi$ is $\Gamma$-equivariant and 
$$\Psi({\rm Id}(x_0))=\sigma.$$
For each fixed $h\in H$, by (3) and (5) the map $I(x_0, X_{\beta+1})\rightarrow {\rm Isom}(x_0, d)$ sending $\phi$ to $\phi^{-1}h\phi$ is continuous. Thus $\Psi$ is continuous. Therefore $\Psi$ is a factor map from $I(x_0, X_{\beta+1})$ to $\im \Psi$. 
For any factor map $X'\rightarrow Y'$ between minimal continuous actions of $G$ on compact metrizable spaces, the images of distal points are distal \cite[Corollary IV.3.25.2]{deVries}. As the $G$-action on $I(x_0, X_{\beta+1})$ is minimal and ${\rm Id}(x_0)$ is distal in $I(x_0, X_{\beta+1})$, we conclude that $\Psi({\rm Id}(x_0))=\sigma$ is distal in $\im \Psi$. Thus $\sigma$ is $G$-point-distal.
As $H^{\{G\}}=H$, we see that the action of $H$ on $\pi_{\beta, \beta+1}^{-1}(x_0)$ is trivial. 
Since $H$ is normal in $G$, $H$ fixes $gy$ for every $y\in \pi_{\beta, \beta+1}^{-1}(x_0)$ and $g\in G$. 
Because the $G$-action on $X_{\beta+1}$ is minimal, we get that the action of $H$ on $X_{\beta+1}$ is trivial. 

This finishes the discussion in Case II. 

Assume that $\beta\le \alpha$ is a limit ordinal and that the action of $H$ on $X_\gamma$ is trivial for all $\gamma<\beta$. Since the $H$-action on $X_\beta$ is the inverse limit of the $H$-actions on $X_\gamma$ for $\gamma<\beta$, we see that the $H$-action on $X_\beta$ is also trivial.

This finishes the transfinite induction. Now we get that the action of $H$ on $X=X_\alpha$ is trivial.
 \end{proof}

The following is the point-distal analogue of the discrete case of \cite[Theorem 2.5]{Abels83}. 

\begin{theorem} \label{P-effective to trivial radical}
Let $G$ be a countable discrete group. If the set of distal points in $(2^G, G)$ is dense, then the point-distal radical of $G$ is $\{e_G\}$.
\end{theorem}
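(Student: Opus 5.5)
By Theorem~\ref{P-effective point-distal}, the density of distal points in $(2^G,G)$ gives us a minimal continuous action of $G$ on a compact metrizable space $X$ and a distal point $x\in X$ with trivial stabilizer. Let $H$ denote the point-distal radical of $G$. By Proposition~\ref{P-point-distal radical}, $H$ is a normal subgroup with $H^{\{G\}}=H$. The plan is to show that $H$ acts trivially on $X$. Since $x$ has trivial stabilizer, this forces $H=\{e_G\}$.

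The tool for the triviality is Lemma~\ref{L-trivial action}, but it requires a strictly AI action, and $X$ itself need not be strictly AI. So the main step is to replace $X$ by a strictly AI system that still has the point $x$ in a usable form. I would invoke the structure theory for point-distal minimal systems (Veech's structure theorem, in the form of Ellis' results as presented in \cite[Chapter VI]{deVries}). It says that for a metric minimal system with a distal point, there exist a strictly AI minimal system $X^*$ and an almost $1$-$1$ factor map $\theta\colon X^*\to X$. I would take this as the main external input.

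Granting this, I would argue in three steps.
\begin{enumerate}
\item Lemma~\ref{L-trivial action} applied to $X^*$ shows that $H$ acts trivially on $X^*$.
\item Since $\theta$ is surjective and $G$-equivariant, for every $h\in H$ and $x'\in X$ we pick $y\in\theta^{-1}(x')$ and get $hx'=h\theta(y)=\theta(hy)=\theta(y)=x'$. So $H$ acts trivially on $X$.
\item In particular $H$ fixes the distal point $x$, whose stabilizer is trivial, so $H=\{e_G\}$.
\end{enumerate}
Notice that surjectivity of $\theta$ is all we use here. We never need a particular preimage of $x$ in $X^*$, only that every point of $X$ lies in the image of $\theta$.

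The main obstacle is justifying the existence of the strictly AI almost $1$-$1$ extension $X^*\to X$ for a merely point-distal minimal metric system. I expect to cite Veech's theorem as given in \cite{deVries}, which says precisely that point-distal metric minimal flows are almost $1$-$1$ factors of strictly AI flows, and to check that the construction stays within compact metrizable spaces. If that citation is unavailable, the fallback is to redo the induction of Lemma~\ref{L-trivial action} directly along the AI tower of $X$ produced by the structure theorem (isometric and proximal extensions with almost $1$-$1$ modifications). In that approach, Case I would need to be adapted to proximal extensions that are not almost $1$-$1$, and this is where I expect the real difficulty.
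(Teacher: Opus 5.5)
Your proposal is correct and is essentially the paper's proof. You reduce via Theorem~\ref{P-effective point-distal} to a minimal metric system with a distal point of trivial stabilizer, pass to a strictly AI almost $1$-$1$ extension by the Veech structure theorem, apply Lemma~\ref{L-trivial action}, and push triviality down along the surjective factor map. The paper cites \cite{Veech2} and \cite[Theorem VI.4.26]{deVries} for exactly the metrizable extension you need, so your fallback is unnecessary; the only cosmetic difference is that you apply Proposition~\ref{P-point-distal radical} to the radical directly rather than going through Corollary~\ref{C-point-distal radical}.
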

\begin{proof} By Theorem~\ref{P-effective point-distal} and Corollary~\ref{C-point-distal radical}, it suffices to show that if $H$ is a normal subgroup of $G$ satisfying $H^{\{G\}}=H$, then $H$ acts on $X$ trivially for every minimal continuous action of $G$ on a compact metrizable space $X$ with a distal point. By the Veech structure theorem \cite{Veech2} (also see \cite[Theorem VI.4.26]{deVries}) there is a minimal continuous action of $G$ on a compact metrizable space $Y$ such that the action $G\curvearrowright Y$ is strictly AI and is an almost $1$-$1$ extension of $G\curvearrowright X$. By Lemma~\ref{L-trivial action} the action of $H$ on $Y$ is trivial. Thus the action of $H$ on $X$ is trivial. 
\end{proof}

\begin{question} \label{Q-effective vs trivial radical}
Does the converse of Theorem~\ref{P-effective to trivial radical} hold? 
\end{question}

\begin{example}\label{G_2}
Let $\mathbb{Q}^*$ be the group of nonzero rational numbers under multiplication. The group $\mathbb{Q}^*$ acts on the additive group $\Qb$ by group automorphisms via multiplication. Denote by $G_2$ the semidirect product $\mathbb{Q}\rtimes \mathbb{Q}^*$. It is easily checked that $G_2$ is 2-step nilpotent. The proof of \cite[Example 4.3]{Abels83} shows that the point-distal radical of $G_2$ is $\mathbb{Q}$. Since it is nontrivial, we conclude from Theorem~\ref{P-effective to trivial radical} that the set of distal points in $(2^{G_2}, G_2)$ is not dense. Since $G_1$ in Example~\ref{E-effective point-distal action} is a subgroup of $G_2$, we have that $G_2$ is not MAP.
\end{example}

\section{Some Related Results\label{sec:5}}

In this final section we present two results regarding denseness of distal points. The first is to consider the collection of all countable groups $G$ for which the distal points in $(2^G, G)$ are dense, and to show that this collection is closed under taking 
finite-index extensions. The second is to consider an extreme opposite of denseness of distal points -- groups $G$ for which there are only two trivial distal points in $(2^G, G)$.

\subsection{Finite-index extensions}\hfill

Denote by $\mathcal{G}$ the class of countable groups $G$ such that the set of distal points in $(2^G,G)$ is dense. By our results in Section~\ref{sec:3} and from Example~\ref{E-effective point-distal action}, the class of countable MAP groups is a proper subclass of $\mathcal{G}$. 

We make the following observations regarding closure properties of $\mathcal{G}$. If $A$ is a TIP*-set of a countable group $G$ and $H$ is a subgroup of $G$, then by definition $A\cap H$ is a TIP*-set of $H$. By Lemma \ref{lem:sIP*}, the class $\mathcal{G}$ is closed under taking subgroups. 

The free group $\mathbb{F}_\omega$ with countably infinitely many generators is residually finite. Since every countable group is a quotient group of $\mathbb{F}_\omega$, $\mathcal{G}$ is not closed under taking quotients. 

By Example \ref{G_2}, $\mathcal{G}$ is not closed under group extension. 

The next theorem shows that $\mathcal{G}$ is closed under finite-index extension.

\begin{theorem} Let $G$ be a countably infinite discrete group. Suppose $H\leq G$ has finite index and the set of all distal points is dense in $(2^H, H)$. Then the set of all distal points is dense in $(2^G, G)$.
\end{theorem}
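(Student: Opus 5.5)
We plan to use the characterization in Theorem~\ref{P-effective point-distal}, specifically the equivalence of (1) with (3)/(4), and to build the required actions of $G$ by inducing actions of $H$. By Theorem~\ref{P-effective point-distal} applied to $H$, there is a minimal continuous action of $H$ on a compact metrizable space $Y$ with a distal point $y_0\in Y$ whose stabilizer in $H$ is trivial. We first reduce to the case where $H$ is normal: the normal core $N=\bigcap_{g\in G}gHg^{-1}$ has finite index in $G$, and $\mathcal{G}$ is closed under subgroups, so $N\in\mathcal{G}$. Hence we may assume $H\trianglelefteq G$ with $G/H$ finite.

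Next we build the induced action. Choose coset representatives $t_1=e_G,\dots,t_k$ of $H$ in $G$. Conjugating by $t_i$ gives actions $h\cdot_i y=(t_i^{-1}ht_i)y$ of $H$ on $Y$. Consider the $H$-space $Y^k$ with the diagonal action of these twisted actions, and the point $\bar y=(y_0,\dots,y_0)$. Each coordinate action is point-distal with $y_0$ distal, because conjugation by $t_i$ is an automorphism of $H$ and distality is preserved. A product of distal points is distal in the product; this is immediate from the IP$^*$-recurrence characterization, since IP$^*$-sets form a filter. The stabilizer of $\bar y$ is trivial. Then we form the induced $G$-space $X=G\times_H Y^k$, which is the finite disjoint union of $k$ copies of $Y^k$ with $G$ permuting the copies and acting by $H$-cocycles. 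Equivalently, $X$ can be realized inside $Y^{G/H}$ in the standard way: $X=\{f\colon G\to Y: f(gh)=h^{-1}f(g)\}$ with $G$ acting by left translation. This is a compact metrizable $G$-space.

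The key step is to show that the point $x$ corresponding to $\bar y$ is distal for the $G$-action and has trivial stabilizer. Trivial stabilizer is clear: an element outside $H$ moves $x$ to another copy, and an element of $H$ acts on the first copy through an action in which $\bar y$ has trivial stabilizer. For distality we will use the IP$^*$ criterion. Every IP-set in $G$ contains an IP-set inside $H$ (by the Ramsey property, Proposition~\ref{Ramsey}, applied to the finite partition of the IP-system by cosets of the finite group $G/H$, together with the fact that products of the sub-IP-system landing in a fixed coset can be regrouped into blocks lying in $H$ — pass to a sub-IP-system with all generators in $H$, as in the standard argument for finite-index subgroups). For a neighborhood $U$ of $x$ inside the first copy, $\{g: gx\in U\}\cap H$ equals the return set of $\bar y$ under $H$, which is IP$^*$ in $H$. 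Hence the return set meets every IP-set of $G$ and is IP$^*$ in $G$, so $x$ is distal in $X$. Then $\overline{Gx}$ is a minimal system with a distal point having trivial stabilizer, which is condition (5) of Theorem~\ref{P-effective point-distal}, and therefore (1) holds for $G$.

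We expect the main obstacle to be the combinatorial lemma that any IP-set in $G$ contains an IP-set contained in the finite-index normal subgroup $H$. We would prove it by taking the IP-system $\{x_\alpha\}$ and projecting it to the finite group $G/H$. Grouping consecutive generators into blocks $y_j=x_{\beta_j}$ (with $\max\beta_j<\min\beta_{j+1}$) whose images in $G/H$ are trivial is possible by pigeonhole: among the partial products $x_{n}x_{n+1}\cdots x_m$ for $m$ ranging over $k+1$ values, two agree mod $H$, so some consecutive block lies in $H$. The $y_j$ then generate an IP-set in $H$ contained in the original one.
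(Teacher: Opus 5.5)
Your proof is correct, and it takes a genuinely different route from the paper's.

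\textbf{The paper's route.} The paper never leaves $2^G$. Given a pattern $p$ on a finite set $F$, it enlarges $F$ to contain left coset representatives $a_0=e_G,\dots,a_k$ with $p(a_i)=p(e_G)$. It then picks distal points $x_i\in 2^H$ realizing the translated patterns on the cosets, glues them via $x(a_ih)=x_i(h)$, and checks directly that $A=\{g: x(g)=x(e_G)\}$ is a TIP*-set. For $\gamma\in G$ one gets $\gamma A\cap H=\eta B$ with $B$ a TIP*-set of $H$. An IP*-set of $H$ is IP* in $G$ because, by the Ramsey property (Proposition~\ref{Ramsey}), every IP-set of $G$ has an IP-subset inside one left coset, and a coset $a_jH\neq H$ cannot contain $g_1,g_2,g_1g_2$.

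\textbf{Your route.} You go through the dynamical characterization instead. You induce a minimal $H$-system with a distal point of trivial stabilizer up to $G$, then invoke (5)$\Rightarrow$(1) of Theorem~\ref{P-effective point-distal}.

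\textbf{Comparison.} The combinatorial core is the same fact: every IP-set of $G$ contains an IP-set lying in $H$.
\begin{itemize}
\item Your block/pigeonhole proof of that fact is more elementary than the paper's appeal to Hindman's theorem. It also needs no normality: if $P_{m_1}H=P_{m_2}H$ for two partial products, then $P_{m_1}^{-1}P_{m_2}\in H$.
\item In exchange, your argument leans on the heavy direction (5)$\Rightarrow$(6)$\Rightarrow$(1) of Theorem~\ref{P-effective point-distal}, namely the zero-dimensional extension and the preservation of distal points under factor maps between minimal systems.
\item The paper's proof is self-contained within the TIP* framework and exhibits the required distal point with the prescribed pattern explicitly.
\end{itemize}

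\textbf{Two small corrections.}
\begin{itemize}
\item The twisted product $Y^k$ and the reduction to the normal core are unnecessary. In $G\times_H Y$ the stabilizer of $[e_G,y_0]$ is already $\{h\in H: hy_0=y_0\}=\{e_G\}$, and your IP$^*$ argument applies verbatim.
\item Your sentence calling $\{f\colon G\to Y: f(gh)=h^{-1}f(g)\}$ an ``equivalent'' realization is wrong. That is the coinduced space, a product of $[G:H]$ copies, not a disjoint union. In it your ``moves to another copy'' argument for the stabilizer breaks down. For $G=H\times\mathbb{Z}/2$ with representatives $e_G$ and $s$, the point $f$ with $f(e_G)=f(s)=y_0$ is fixed by $s$. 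Keep to the disjoint-union model, where your argument works as written.
\end{itemize}
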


\begin{proof} Let $F\subseteq G$ be a finite subset and $p: F\to \{0,1\}$. We may assume $e_G\in F$. Let $a_0=e_G, a_1, \dots, a_k$ be a complete set of left-coset representatives for $H$ in $G$. By choosing $a_1,\dots, a_k$ appropriately and possibly extending $F$ and $p$, we may assume $a_i\in F$ for all $0\leq i\leq k$ and $p(a_i)=p(e_G)$ for all $0\leq i\leq k$. We define an IP*-recurrent point $x\in 2^G$ with $x|_F=p$. For $0\leq i\leq k$, let $F_i=a_i^{-1}(F\cap a_iH)\subseteq H$ and let $p_i: F_i\to \{0,1\}$ be defined as $p_i(h)=p(a_ih)$ for $h\in F_i$. Since the set of all distal points is dense in $(2^H, H)$, there exist IP*-recurrent points $x_i\in 2^H$ with $x_i|_{F_i}=p_i$. Let $x\in 2^G$ be defined as $x(a_ih)=x_i(h)$ for all $h\in H$. Obviously $x|_{F}=p$. In particular, $x(a_i)=x_i(e_G)=p_i(e_G)=p(a_i)=p(e_G)=x(e_G)$ for all $0\leq i\leq k$. We verify that $x$ is ${\rm IP}^*$-recurrent.

It suffices to show that $A=\{g\in G\,:\, x(g)=x(e_G)\}$ is a TIP*-set. For this, let $\gamma\in G$. Suppose $\gamma^{-1}=a_{i_0}\eta^{-1}$ where $0\leq i_0\leq k$ and $\eta\in H$. We need to show that either $\gamma A$ or $G\setminus \gamma A$ is an ${\rm IP}^*$-set.

Let $B=a_{i_0}^{-1}(A\cap a_{i_0}H)$. Then $B\subseteq H$, and for any $h\in H$,
$$ h\in B\iff a_{i_0}h\in A \iff x(a_{i_0}h)=x(e_G) \iff x_{i_0}(h)=x_{i_0}(e_G). $$
Since $x_{i_0}\in 2^H$ is ${\rm IP}^*$-recurrent, $B$ is a TIP*-set on $H$, and in particular either $\eta B$ or $H\setminus \eta B$ is an IP*-set on $H$. Without loss of generality assume $\eta B$ is an IP*-set on $H$. The argument for the case $H\setminus \eta B$ is an IP*-set is similar.

Noting that $\eta B=\gamma A\cap H$, we just need to show that $\eta B$ is an IP*-set on $G$. Let $N$ be an IP-set in $G$. By Proposition \ref{Ramsey}, there is $0\le i\le k$ such that $N\cap a_iH$ contains an IP-set on $G$. Note that for any $1\le j\le k$ and $g_1,g_2\in a_jH$, we have $g_1g_2\notin a_jH$; thus, by the definition of IP-sets, $a_jH$ contains no IP-sets on $G$. It follows that $N\cap a_0H=N\cap H$ contains an IP-set on $G$. Since the set $\eta B$ is an IP*-set on $H$, we have $\eta B\cap N=\eta B\cap N\cap H\ne\varnothing$. This shows that $\eta B$ is an IP*-set on $G$. 
\end{proof}

\subsection{Minimally almost periodic groups}\hfill

In this subsection we consider countably infinite groups $G$ for which the system $(2^G,G)$ just has two trivial distal points $\overline{0}$ (the constant $0$ sequence) and $\overline{1}$ (the constant $1$ sequence). This is the opposite of the denseness of distal points. It turns out that this also corresponds to a well-studied notion for topological groups. 

\begin{definition} A topological group $G$ is {\em minimally almost periodic} (or {\em MinAP}) if it has no non-trivial continuous homomorphism to any compact Hausdorff topological group.
\end{definition}

\begin{theorem} 
Let $G$ be  a countable discrete group. The following are equivalent:
\begin{enumerate}
\item $G$ is minAP;
\item The system $(2^G,G)$ has only two trivial distal points $\overline{0}$ and $\overline{1}$;
\item The system $(2^G,G)$ has only two trivial almost automorphic points $\overline{0}$ and $\overline{1}$.
\end{enumerate}
\end{theorem}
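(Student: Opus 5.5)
We will prove (2)$\Rightarrow$(3), (3)$\Rightarrow$(1) and (1)$\Rightarrow$(2). The implication (2)$\Rightarrow$(3) is immediate: by Lemma~\ref{lem:3.7} every almost automorphic point is $\Delta^*$-recurrent, hence IP*-recurrent, hence distal. Also $\overline{0}$ and $\overline{1}$ are fixed points and therefore almost automorphic.

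For (3)$\Rightarrow$(1) we argue by contrapositive. Suppose there is a nontrivial homomorphism $\varphi\colon G\to K$ into a compact Hausdorff group. By Peter--Weyl (as in Lemma~\ref{L-point-distal kernel}) we may take $K=U_n$, and after replacing $K$ by $\overline{\varphi(G)}$ we may take $K$ compact metrizable with $\varphi(G)$ dense. Pick $g$ with $\varphi(g)\neq e_K$, and choose a two-sided invariant compatible metric $\rho$ on $K$. Let $\epsilon>0$ be such that $\epsilon<\rho(\varphi(g),e_K)$ and $\epsilon$ avoids the countable set $\{\rho(\varphi(a),\varphi(b)):a,b\in G\}$. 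Put $S=\{h\in G:\rho(\varphi(h),e_K)<\epsilon\}$. Pulling back $\rho$ via $\varphi$ gives a totally bounded left-invariant pseudometric $d$ on $G$, and $S$ is a $d$-clopen neighbourhood of $e_G$. The proof of Lemma~\ref{lem:tbli} uses only the pigeonhole argument, so it works verbatim for pseudometrics and shows that $S$ is a T$\Delta^*$-set. Let $x=1_S$. Then $\{h:x(h)=x(e_G)\}=S$, so by Lemma~\ref{lem:3.8} $x$ is almost automorphic. Since $e_G\in S$ and $g\notin S$, $x$ is nonconstant, which contradicts (3).

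For (1)$\Rightarrow$(2), suppose $x\in 2^G$ is a nonconstant distal point and let $X=\overline{Gx}$. By Corollary~\ref{minimal}, $X$ is minimal, so $X$ is a minimal system containing a distal point; it is point-distal. Since $x$ is nonconstant, some $g$ satisfies $gx\ne x$, so $G$ acts nontrivially on $X$. We must produce from this a nontrivial homomorphism to a compact group. The route via Theorem~\ref{P-effective to trivial radical} and Lemma~\ref{L-trivial action} is the natural one. If $G$ is minAP, then $G^{\{G\}}=\widetilde G=G$ by Remark~\ref{R-distal radical and Bohr radical}. Apply the Veech structure theorem to $X$ as in the proof of Theorem~\ref{P-effective to trivial radical}: it gives a strictly AI almost $1$-$1$ extension $Y\to X$. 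Lemma~\ref{L-trivial action} with $H=G$ then shows that $G$ acts trivially on $Y$, hence on $X$. So $gx=x$ for all $g$, i.e., $x$ is constant, a contradiction.

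The main obstacle is this last step, since it relies on the heavy structure theory (Veech's theorem together with Lemma~\ref{L-trivial action}). One must check that the hypothesis $H^{\{G\}}=H$ for $H=G$ follows from minAP. It does: by Remark~\ref{R-extension to distal}, every point-distal homomorphism of $G$ to a compact group is an actual homomorphism, and under minAP every such homomorphism is trivial, so its kernel is all of $G$. The remaining steps are routine adaptations of results already stated in the paper.
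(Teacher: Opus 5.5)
Your proof is correct. The (2)$\Rightarrow$(3) step is the same as the paper's. Your (3)$\Rightarrow$(1) is the paper's argument made explicit: the paper uses that $\pi(G)$ is MAP to get a T$\Delta^*$-set in $\pi(G)$ and then pulls it back (``easily checked''). You instead pull the invariant metric back to a totally bounded left-invariant pseudometric on $G$ and rerun the pigeonhole proof of Lemma~\ref{lem:tbli}, which indeed uses only the triangle inequality and left-invariance.

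The genuine difference is in (1)$\Rightarrow$(2).

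\emph{The paper's route.} It cites Veech's theorem that a nontrivial minimal point-distal metric flow has a nontrivial equicontinuous factor $Y$. It then uses that $E(Y)$ is a compact group containing a dense homomorphic image of $G$. This is one black box and very short.

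\emph{Your route.} You recycle the machinery of Section~\ref{sec:6}. MinAP gives $G^{\{G\}}=\widetilde G=G$, and this is immediate because every element of $\Hom(G,\Gamma)$ is trivial. So Remark~\ref{R-extension to distal} is not needed here, and your sentence about ``actual homomorphisms'' is garbled but harmless. The Veech structure theorem together with Lemma~\ref{L-trivial action} for $H=G$ then forces $G$ to act trivially on $\overline{Gx}$, so $x$ is constant.

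\emph{Comparison.} Your route is heavier: through Lemma~\ref{L-trivial action} it relies on Ellis's theorem and Abels's Proposition 1.4. For $H=G$, however, that lemma degenerates. Each level of the AI tower is a singleton (trivial action plus minimality). The first isometric step over a point would then be an equicontinuous system, whose isometry group gives a nontrivial homomorphism into a compact group. So at bottom it is the same mechanism as Veech's Theorem 6.1.

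What your route buys is conceptual. It exhibits (1)$\Rightarrow$(2) of this theorem as the case $H=G$ of the argument behind Theorem~\ref{P-effective to trivial radical}. Since minAP is equivalent to $G^{\{G\}}=G$ (Remark~\ref{R-distal radical and Bohr radical}), i.e.\ to the point-distal radical being all of $G$, the two theorems become the two extremes of the radical picture.
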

\begin{proof}
(1)$\Rightarrow$(2): If there is a nontrivial distal point $x\in 2^G$, then $(\overline{Gx},G)$ is point-distal. By \cite[Theorem 6.1]{Veech2}, there is a non-trivial minimal equicontinuous system $(Y,G)$. Let $\varphi:G\to{\rm Homeo}(Y)$ denote the action of $G$ on $Y$. The enveloping semigroup $E(Y)$ of $(Y,G)$ is a compact group and $\varphi(G)$ is dense in $E(Y)$ (see \cite[Chapter 3, Theorem 3]{AusBook}). In particular, $\varphi$ is a non-trivial group homomorphism from $G$ to some compact Hausdorff topological group, contradicting (1).

(2)$\Rightarrow$(3): Every almost automorphic point is distal.

(3)$\Rightarrow$(1): If $G$ is not minAP, then there is a non-trivial homomorphism $\pi$ from $G$ to some compact Hausdorff topological group $\Gamma$. Take $g\in G$ such that $\pi(g)\ne e_\Gamma$. The image $\pi(G)$ is MAP, from which it follows that there is a T$\Delta^*$-set $A$ of $\pi(G)$ such that $e_\Gamma\in A$ and $\pi(g)\notin A$. It is easily checked that $\pi^{-1}(A)$ is a T$\Delta^*$-set of $G$, $g\notin \pi^{-1}(A)$ and $e_G\in \pi^{-1}(A)$. Then define $x\in 2^G$ such that $x(h)=1$ if and only if $h\in \pi^{-1}(A)$, $x$ is a non-trivial almost automorphic point, contradicting (3).
\end{proof}

It was shown in \cite{XY} that the Tarski monster group is MinAP, and hence it has only trivial distal points $\overline{0}$ and $\overline{1}$. Some other examples of countable discrete MinAP groups are
\begin{enumerate}
\item[(i)] the finitely-supported alternating group $\mbox{\rm Alt}(\mathbb{N})$;
\item[(ii)] Hall's universal countable locally finite group $\mathbb{H}$;
\item[(iii)] $SL(n,K)$ and $PSL(n,K)$ when $n\geq 2$ and $K$ is a countably infinite field.
\end{enumerate}
For each of them, the system $(2^G,G)$ has only two trivial distal points $\overline{0}$ and $\overline{1}$.

\thebibliography{999}

\bibitem{Abels83}
H. Abels,
Which groups act distally?
\textit{Ergodic Theory Dynam. Systems} 3 (1983), no. 2, 167--185.

\bibitem{Auslander}
J. Auslander,
On the proximal relation in topological dynamics,
\textit{Proc. Amer. Math. Soc.} 11 (1960), 890--895.

\bibitem{AusBook}
J. Auslander,
\textit{Minimal Flows and Their Extensions}. North-Holland Math. Studies 153. North-Holland, 1988.

\bibitem{BT}
T.~Bröcker and T.~tom Dieck. {\it Representations of Compact Lie Groups}. Translated from the German manuscript. Corrected reprint of the 1985 translation. Graduate Texts in Mathematics, 98. Springer-Verlag, New York, 1995.

\bibitem{DLX}
X. Dai, H. Liang, 
On Galvin's theorem for compact Hausdorff right-topological semigroups with dense topological centers, 
\textit{Sci. China Math.} 60 (2017), no. 12, 2421--2428.





\bibitem{Ellis}
R. Ellis,
A semigroup associated with a transformation group,
\textit{Trans. Amer. Math. Soc.} 94 (1960), 272--281.

\bibitem{Ellis73}
R. Ellis,
The Veech structure theorem,
\textit{Trans. Amer. Math. Soc.} 186 (1973), 203--218.

\bibitem{Fur}
H. Furstenberg, \textit{Recurrence in Ergodic Theory and Combinatorial Number Theory}, Princeton University Press, 1981.







 \bibitem{MZ}
D. Montgomery, L. Zippin,
\textit{Topological Transformation Groups},
Interscience Publishers, New York--London, 1955, xi+282 pp.




\bibitem{Ra}
F. P. Ramsey, 
On a problem of formal logic, \textit{Proc. London Math. Soc.} 30 (1930), no. 1, 264--286.

\bibitem{RZbook}
L. Ribes, P. Zalesskii,
\textit{Profinite Groups}, second edition, Springer-Verlag Berlin Heidelberg, 2010.

\bibitem{Veech1}
W. A. Veech,
Almost automorphic functions on groups,
\textit{Amer. J. Math.} 87 (1965), 719--751.

\bibitem{Veech2}
W. A. Veech,
Point-distal flows,
\textit{Amer. J. Math.} 92 (1970), 205--242.

\bibitem{vN}
 J. von Neumann, 
Almost periodic functions in a group I, \textit{Trans. Amer. Math. Soc.} 36 (1934),
445--492.

\bibitem{deVries}
J. de Vries,
\textit{Elements of Topological Dynamics}.
Mathematics and its Applications, 257.
Kluwer Academic Publishers Group, Dordrecht, 1993.

\bibitem{HSXY}
W. Huang, S. Shao, H. Xu, X. Ye, 
On systems disjoint from all minimal systems, 
to appear in \textit{Trans. Amer. Math. Soc.}, arxiv:2504.17504v1.

\bibitem{XY}
H. Xu, X. Ye,
Disjointness with all minimal systems under group actions,
to appear in \textit{Israel J. Math.}, arxiv: 2212.07830.

\end{document}